\newtheorem{thm}{Theorem}[section]
\newtheorem{claim}{Claim}
\newtheorem{subclaim}{Subclaim}[claim]
\newtheorem{lem}[thm]{Lemma}
\newtheorem{proposition}[thm]{Proposition}
\newtheorem{problem}[thm]{Problem}
\newtheorem{observation}[thm]{Observation}
\renewcommand\ge\geqslant
\renewcommand\geq\geqslant
\renewcommand\le\leqslant
\renewcommand\leq\leqslant
\tikzstyle{vertex}=[circle, draw, fill=black!50,
\tikzset{->-/.style={decoration={
			markings,
			mark=at position .5 with {\arrow{>}}},postaction={decorate}}}
\tikzstyle{bigblue}=[color=blue, very thick, >=stealth]
\tikzstyle{lightblue}=[color=blue, thin, >=stealth]
\tikzstyle{bigred}=[color=red, very thick, >=stealth]
\tikzstyle{lightred}=[color=red, thin, >=stealth]
\tikzstyle{biggreen}=[color=black!30!green, very thick, >=stealth]
\tikzstyle{lightgreen}=[color=black!30!green,  thin, >=stealth]
\newlength{\bibitemsep}\setlength{\bibitemsep}{1pt}
\newlength{\bibparskip}\setlength{\bibparskip}{0pt}
\let\oldthebibliography\thebibliography
\renewcommand\thebibliography[1]{
	\oldthebibliography{#1}
	\setlength{\parskip}{\bibitemsep}
	\setlength{\itemsep}{\bibparskip}
}
\title{Reconfiguration graphs for vertex colorings of $P_5$-free graphs}
\author{\small {Hui Lei$^1$, Yulai Ma$^2$, Zhengke Miao$^3$, Yongtang Shi$^4$, Susu Wang$^4$}\\
{\small $^1$ School of Statistics and Data Science, LPMC and KLMDASR}\\
{\small Nankai University, Tianjin 300071, China}\\
{\small $^2$ Department of Mathematics}\\
{\small Paderborn University, Warburger Str. 100, Paderborn 33098,
Germany}\\
{\small $^3$ School of Mathematics and Statistics} \\
{\small$ \& $  Key Laboratory of Analytical Mathematics and Applications (Ministry of Education)}\\
{\small Fujian Normal University, Fuzhou, Fujian 350007,  China}\\
{\small $^4$ Center for Combinatorics and LPMC}\\
{\small Nankai University, Tianjin 300071, China}\\
{\small Email: hlei@nankai.edu.cn; 
yulai.ma@upb.de;} \\ {\small zkmiao@jsnu.edu.cn; shi@nankai.edu.cn; susuwang@mail.nankai.edu.cn}\\
}
\date{\today}
\begin{document}
\maketitle
\begin{abstract}


For any positive integer $k$, the reconfiguration graph for all $k$-colorings of a graph $G$, denoted by $\mathcal{R}_k(G)$, is the graph where vertices represent the $k$-colorings of $G$, and two $k$-colorings are joined by an edge if they differ in color on exactly one vertex. Bonamy et al. established that for any $2$-chromatic $P_5$-free graph $G$, $\mathcal{R}_k(G)$ is connected for each $k\geq 3$. On the other hand, Feghali and Merkel proved the existence of a $7p$-chromatic $P_5$-free graph $G$  for every positive integer $p$, such that $\mathcal{R}_{8p}(G)$ is disconnected.  

In this paper, we offer a  detailed classification of the connectivity of $\mathcal{R} _k(G) $ concerning  $t$-chromatic $P_5$-free graphs $G$ for cases $t=3$, and $t\geq4$ with $t+1\leq k \leq {t\choose2}$. We demonstrate that $\mathcal{R}_k(G)$ remains connected for each $3$-chromatic $P_5$-free graph $G$ and each $k \geq 4$. Furthermore,  for each $t\geq4$ and $t+1 \leq k \leq {t\choose2}$,  we provide a construction of a $t$-chromatic $P_5$-free graph $G$ with $\mathcal{R}_k(G)$ being disconnected.  
This resolves a question posed by Feghali and Merkel.

\noindent\textbf{Keywords:} reconfiguration graphs; $P_5$-free graphs; frozen colorings; $k$-mixing\\
\end{abstract}

\section{Introduction}
\noindent Reconfiguration problems, spanning various fields, involve transforming solutions to a source problem into one another through elementary steps. 
These problems have been studied across various topics in graph theory, including vertex colorings, perfect matchings, independent sets, dominating sets, and more. For further details, readers are referred to surveys by Nishimura \cite{Nishimura2018} and van den Heuvel \cite{Van den2013}.

In this paper, we study reconfigurations for vertex colorings of graphs. All graphs under consideration are finite and simple. For undefined notation and terminology, we refer readers to \cite {Bondygraph2008}.  Let   $G=(V(G),E(G))$ be a graph, and $k$ be a positive integer. 
A  {\it proper $k$-coloring} of  $G$ is a mapping $\phi: V(G)\rightarrow \{1,2,\ldots,k\}$ such that
$\phi(u)\neq \phi(v)$ for any two adjacent vertices $u,v\in V(G)$. We simply write $k$-coloring for proper $k$-coloring in this paper, since all colorings under consideration are proper.
Additionally, $G$ is called {\it $k$-colorable} if it admits a proper $k$-coloring. The {\it chromatic number}  of $G$, denoted by $\chi(G)$,  is the smallest integer $k$ such that $G$ is
 $k$-colorable. In particular,  $G$  is called {\em $k$-chromatic} if $\chi(G)=k$.
 The {\it reconfiguration graph} for all $k$-colorings
of $G$, also called the {\it $k$-recoloring graph}, denoted by $\mathcal{R}_k(G)$, is the graph whose vertices are the $k$-colorings of $G$ and two
colorings are joined by an edge if they differ in color on exactly one vertex.  


As a major problem in this filed, the connectivity of $\mathcal{R}_k(G)$ has attracted widespread interest.
A result of Jerrum \cite{Jerrum1995} implies the existence of $k$ for each graph $G$, such that $\mathcal{R}_k(G)$ is connected. Precisely, he proved that $\mathcal{R}_k(G)$ is connected for each integer $k\geq \Delta(G)+2$. So it is of particular interest to investigate, for a given  class $\mathcal{G}$ of graphs, which values of $k$ make $\mathcal{R}_k(G)$ connected for each graph $G$ in $\mathcal{G}$. Notably,  it is observed that there exists no direct correlation between the connectivities of graphs $\mathcal{R}_i(G)$ and $\mathcal{R}_j(G)$ for a given graph $G$, where $i$ and $j$ are two integers with $i > j \geq \chi(G)$ (see Proposition \ref{RiRj}).
 Many related results have been proved in some special graphs classes, including degenerate graphs \cite{Bartiertreewidth22021,Bousquettrianglefree2022, Bousquetdelta2022, Bousquetsparse2016,Cambielist2024,FeghaliWeak2020}, planar graphs 
\cite{Bartiergirth52023, Chandranlist2022, Cranstonsparse2022,Cranston5colplanar2024,DvořákThomassen2021,Dvořákupdate2020,Eibenplanar2020, Feghalimad2021}, and perfect graphs \cite{Marthechordal2014, CerecedaConn2008, FeghaliWeak2020, FeghaliMix2022, Merkelweaklychordal2022}.  In this paper, we focus on $P_\ell$-free graphs, which contain no induced path of length $\ell-1$. 
 
Bonamy and Bousquet \cite{BonamyTree2018}  proved that, for each $t\geq1$ and $k\geq t+1$, $\mathcal{R}_{k}(G) $ is connected
for each $t$-chromatic $P_4$-free graph $G$. It is worth noting that the lower bound of $k$ is optimal, because any $t$-coloring of  $K_t$  is  an isolated vertex in  $\mathcal{R}_{t}(K_t) $, where $K_t$ is the complete graph of order $t$. 
A natural question arises: Does the analogue hold for $P_\ell$-free graphs for $\ell\geq5$? This property is trivially satisfied by $1$-chromatic $P_\ell$-free graphs. But, unfortunately, based on a result of Cereceda, van den Heuvel, and Johnson \cite{CerecedaConn2008}, Bonamy and Bousquet \cite{BonamyTree2018} observed that it is not the case for $2$-chromatic $P_6$-free graphs.

\begin{proposition}[\cite{BonamyTree2018,CerecedaConn2008}]\label{2p6}
For any $k\geq3$, there exists a $2$-chromatic $P_6$-free graph $G$ with $\mathcal{R}_{k}(G)$ being disconnected. 
\end{proposition}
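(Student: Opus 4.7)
The plan is to exhibit, for every $k \geq 3$, a concrete $2$-chromatic $P_6$-free graph $G_k$ that admits a \emph{frozen} $k$-coloring, i.e. a $k$-coloring which is an isolated vertex of $\mathcal{R}_k(G_k)$. Since $G_k$ trivially has more than one $k$-coloring (permuting the colors of a frozen coloring produces a different one), an isolated vertex immediately forces $\mathcal{R}_k(G_k)$ to be disconnected.

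\textbf{The construction.} For $k\geq 3$, take $G_k := K_{k,k}-M$, the complete bipartite graph with one perfect matching $M$ removed. Label the two parts $A=\{a_1,\dots,a_k\}$ and $B=\{b_1,\dots,b_k\}$ so that $M=\{a_ib_i : 1\leq i\leq k\}$. Because $G_k$ is bipartite and contains at least one edge, $\chi(G_k)=2$.

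\textbf{Checking $P_6$-freeness.} I would argue by contradiction. Suppose $v_1v_2v_3v_4v_5v_6$ is an induced $P_6$ in $G_k$. Bipartiteness of $G_k$ forces $\{v_1,v_3,v_5\}$ and $\{v_2,v_4,v_6\}$ to be the two sides of the induced path. For the path to be induced, $v_1$ must be non-adjacent to $v_4$ and $v_6$, both of which lie on the side opposite to $v_1$. However, every vertex of $G_k$ has \emph{exactly one} non-neighbor on the opposite side, namely its $M$-partner, so $v_1$ cannot miss two vertices on the opposite side. Contradiction.

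\textbf{Producing a frozen $k$-coloring.} Define $\phi: V(G_k)\to\{1,\dots,k\}$ by $\phi(a_i)=\phi(b_i)=i$. This is proper because $a_ib_j$ is an edge if and only if $i\neq j$. For any $a_i$, its neighborhood $\{b_j: j\neq i\}$ realises all $k-1$ colors in $\{1,\dots,k\}\setminus\{i\}$, so $a_i$ cannot be recolored, and symmetrically for every $b_i$. Hence $\phi$ is isolated in $\mathcal{R}_k(G_k)$. Swapping colors $1$ and $2$ in $\phi$ gives a second $k$-coloring, so $\mathcal{R}_k(G_k)$ has more than one vertex and is therefore disconnected.

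The only step with any content is $P_6$-freeness, and even that collapses to the one-line observation that each vertex has a unique non-neighbor on the opposite side; the remaining steps amount to bookkeeping of the construction.
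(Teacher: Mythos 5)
Your construction $G_k=K_{k,k}-M$ is exactly the graph $M_k$ that the paper uses (see the discussion preceding Proposition \ref{RiRj}), and your frozen coloring and the resulting disconnectedness argument coincide with the paper's; your explicit verification of $P_6$-freeness via the unique non-neighbor on the opposite side is correct and simply fills in a detail the paper leaves as an observation. So the proposal is correct and takes essentially the same approach as the paper.
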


Actually,  the above proposition can be simply generalized to $t$-chromatic $P_6$-free graphs for all $t\geq3$. This completes the classification of the connectivity of $ R _k(G) $ for  $t$-chromatic $P_6$-free graphs $G$.  A directed proof of the following proposition is presented  in Section \ref{preli}. 

\begin{proposition}\label{tp6}
For any $t\geq2$, $k\geq t+1$ and $\ell \geq 6$, there exists a $t$-chromatic $P_{\ell}$-free graph $G$ with $\mathcal{R}_{k}(G)$ being disconnected. 
\end{proposition}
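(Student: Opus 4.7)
My plan is to reduce the general $t \geq 2$ case to the $t = 2$ case (Proposition~\ref{2p6}) by taking the join of a suitable bipartite example with a clique of size $t-2$, exploiting the facts that joining with universal vertices preserves $P_6$-freeness and adds their number to the chromatic number.

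The first step is to exhibit a concrete $2$-chromatic, $P_6$-free graph admitting a frozen coloring. Let $m := k - t + 2 \geq 3$ and take $H := K_{m,m} - M$, where $M$ is a perfect matching; label the sides $A_1, \ldots, A_m$ and $B_1, \ldots, B_m$ so that $A_iB_j$ is an edge iff $i \neq j$. The ``diagonal'' coloring $\phi(A_i) = \phi(B_i) = i$ is a proper $m$-coloring in which each vertex sees all $m-1$ other colors on its neighborhood, making it an isolated vertex of $\mathcal{R}_m(H)$. To show $P_6$-freeness, I would use that any induced path in the bipartite graph $H$ must alternate sides, and then run a short index-chase on a putative induced path $A_{a_1}B_{b_1}A_{a_2}B_{b_2}A_{a_3}B_{b_3}$: the required non-edges $A_{a_1}B_{b_2}$ and $A_{a_1}B_{b_3}$ force $a_1 = b_2 = b_3$, contradicting $b_2 \neq b_3$.

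Next I would set $G := H \vee K_{t-2}$, the join of $H$ with a clique of size $t-2$ (interpreted as $G = H$ when $t = 2$). Since $\chi$ is additive under join, $\chi(G) = 2 + (t-2) = t$. The $P_6$-freeness survives the join because every vertex of $K_{t-2}$ is universal in $G$, and a universal vertex cannot lie on any induced path with more than three vertices; hence every induced $P_6$ of $G$ would be forced inside $H$, which is impossible. Extending the frozen coloring of $H$ to $G$ by assigning the $t-2$ clique vertices the fresh colors $m+1, \ldots, k$ bijectively produces a proper $k$-coloring in which every $H$-vertex sees the $m-1$ other colors of $\{1, \ldots, m\}$ (from its $H$-neighbors) together with the $t-2 = k - m$ clique colors, i.e., all of $\{1, \ldots, k\}\setminus\{\phi(v)\}$; and each clique vertex, being universal, also sees all $k-1$ other colors. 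This $k$-coloring is therefore frozen, supplying an isolated vertex of $\mathcal{R}_k(G)$. Because $G$ clearly has more than one $k$-coloring (swap any two colors in the frozen one), $\mathcal{R}_k(G)$ is disconnected; and $P_6$-freeness automatically upgrades to $P_\ell$-freeness for every $\ell \geq 6$.

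The main obstacle is the $P_6$-freeness of $H = K_{m,m} - M$ itself: the construction fails for $m = 2$, so the uniform index argument above is essential for all $m \geq 3$ (in particular covering the degenerate case $m = 3$, in which $H$ is simply $C_6$). The remaining ingredients---additivity of $\chi$ under join, propagation of frozenness after attaching universal vertices, and existence of a second $k$-coloring---are entirely routine bookkeeping.
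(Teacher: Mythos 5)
Your proof is correct, and it starts from the same base example the paper uses: the graph $M_m$ (the complete bipartite graph $K_{m,m}$ minus a perfect matching) with $m=k-t+2\geq 3$, together with its diagonal frozen $m$-coloring, exactly as in Proposition~\ref{RiRj}. Where you genuinely diverge is in how the chromatic number is lifted from $2$ to $t$: you take the join with $K_{t-2}$ and use the fact that a universal vertex cannot lie on an induced path with four or more vertices, so an induced $P_6$ of $G$ would have to live inside $H$, which your index-chase rules out. The paper instead proves a general lifting statement, Proposition~\ref{p6free}, which adds (one colour at a time) an independent set of false twins of a dominating colour class and iterates $s=t-2$ times. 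Your join argument is shorter, self-contained, and still yields $\omega(G)=\chi(G)=t$, matching what the paper's construction achieves. What the paper's heavier machinery buys is generality: Proposition~\ref{p6free} preserves $P_\ell$-freeness for every $\ell\geq 3$, whereas attaching a universal vertex can create an induced $P_3$ in a $P_3$-free graph, so the join trick only works for $\ell\geq 4$; the paper also wants to reuse that proposition later for $P_5$-free graphs in its concluding remarks. For the statement as given ($\ell\geq 6$) your route is a valid and arguably cleaner alternative; the only steps that need to be written out in full detail are the $P_6$-freeness of $K_{m,m}$ minus a perfect matching (your index-chase is right) and the degree argument showing a universal vertex lies on no induced $P_4$.
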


Concerning the class of $P_5$-free graphs, some related results exist for certain subclasses \cite{Belavadihereditary2024, Belavadiforb2024, Belavadimodular2024, Marthechordal2014,  FeghaliWeak2020, FeghaliMix2022}, but few encompass the entire class of graphs. In \cite{Marthechordal2014}, Bonamy et al. proved the following theorem.

\begin{thm}[\cite{Marthechordal2014}]\label{2colorp5}
If $G$ is a $2$-chromatic $P_5$-free graph, then $\mathcal{R}_{k}(G) $ is connected for each $k\geq 3$.
\end{thm}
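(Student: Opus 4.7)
The plan is to reduce to the case where $G$ is connected (distinct components can be handled independently, since $\mathcal{R}_k$ of a disjoint union is the Cartesian product of the factors) and fix a bipartition $(A,B)$ of $G$. Let $\phi^{\star}$ denote the canonical $2$-coloring with $\phi^{\star}(A)=\{1\}$ and $\phi^{\star}(B)=\{2\}$. By transitivity it suffices to show that every $k$-coloring $\phi$ of $G$ can be joined to $\phi^{\star}$ in $\mathcal{R}_{k}(G)$.

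I would proceed by induction on $|V(G)|$, the base case being immediate. For the inductive step, the central tool is a structural property of connected $P_5$-free bipartite graphs, namely that (by a Bacs\'{o}--Tuza-type theorem on dominating subgraphs of $P_5$-free graphs) $G$ contains either a dominating edge $ab$ with $a\in A$ and $b\in B$, or a dominating $P_3$. In the dominating-edge case $a$ is adjacent to every vertex of $B$ and $b$ to every vertex of $A$, giving a rigid backbone to exploit. The recoloring strategy then has three phases: first, use the third color (available since $k\ge 3$) as a buffer to clear color $1$ from $B$ and color $2$ from $A$; next, recolor $b$ to $2$ and $a$ to $1$, which is now legal; finally, apply the inductive hypothesis to $G-\{a,b\}$ to bring the remaining colors into the canonical pattern. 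The dominating-$P_3$ case is handled by an analogous but slightly more delicate argument, pinning the three path vertices first.

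The main obstacle will be the third phase: the induction hypothesis yields a recoloring sequence on $G-\{a,b\}$, but some intermediate $k$-coloring along that sequence might assign color $1$ to a vertex of $B\setminus\{b\}$ or color $2$ to a vertex of $A\setminus\{a\}$, which would violate propriety in $G$ once $a$ and $b$ have been pinned at $1$ and $2$. Overcoming this forces one to strengthen the induction hypothesis to a list-coloring variant of $k$-mixing: one must reconfigure $G-\{a,b\}$ using the restricted lists $\{2,3,\dots,k\}$ on $A$-vertices and $\{1,3,\dots,k\}$ on $B$-vertices. The $P_5$-free hypothesis is exactly what makes such a strengthened induction go through: it is preserved under vertex deletion, guarantees the recursive existence of a dominating substructure in every connected piece that arises, and, combined with the slack from $k\ge 3$, ensures that both the buffer moves at the current level and the restricted recolorings at the next level can always be performed.
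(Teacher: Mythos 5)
This theorem is imported by the paper from Bonamy, Johnson, Lignos, Patel and Paulusma \cite{Marthechordal2014}; the present paper contains no proof of it, so your attempt can only be judged on its own terms, and as written it has two genuine gaps. The first is Phase 1: ``use the third color as a buffer to clear color $1$ from $B$ and color $2$ from $A$'' is not an argument but a restatement of the entire difficulty. To recolor $v\in B$ with $\phi(v)=1$ you need a color absent from $N(v)\subseteq A$, and for $k=3$ the neighbors of $v$ may already carry both $2$ and $3$; recoloring them first raises the same obstruction on the other side, and nothing you say breaks this cycle. The irony is that the dominating edge already gives you a fact that makes all buffering unnecessary: since $a$ is complete to $B$ and $b$ is complete to $A$, \emph{every} proper coloring $\phi$ satisfies $\phi(a)\notin\phi(B)$ and $\phi(b)\notin\phi(A)$. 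Hence one may immediately sweep all of $A$ to the color $\phi(a)$ and then all of $B$ to $\phi(b)$, reaching the two-valued coloring $(A,B)\mapsto(\phi(a),\phi(b))$; for $k\geq3$ all such two-valued colorings are joined to one another by recoloring one side to a spare color and then the other. Insisting on the fixed colors $1$ and $2$ from the outset is exactly what creates the obstacle you cannot clear, and no induction is needed at all in the dominating-edge case.

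The second gap is Phase 3: you replace the statement being proved by a strictly stronger list-reconfiguration statement (lists of size $k-1$ on $G-\{a,b\}$), assert that $P_5$-freeness makes it ``go through,'' and never prove it; your induction hypothesis does not cover it, and for $k=3$ it is a $2$-list claim that certainly requires its own argument. (The lists are also attached to the wrong sides: pinning $a\in A$ at color $1$ deletes color $1$ from the lists of the vertices of $B$, not of $A$, and with the lists as you wrote them the target coloring $A\mapsto 1$ is not even list-admissible.) Finally, the dominating-$P_3$ case is waved off as ``analogous but more delicate'' with no details, although in fact it never arises: a connected bipartite $P_5$-free graph on at least two vertices contains no induced $2K_2$ (a shortest path between the two edges of an induced $2K_2$, together with the two pendant vertices, yields an induced $P_5$ because the connecting vertices can meet only one side of the bipartition), hence is a chain graph and always has a dominating edge --- equivalently, it always has a false twin, which gives an even shorter inductive proof in the spirit of Claim~\ref{nft}. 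So the theorem is true and the dominating-edge instinct is the right one, but the proof as proposed does not close.
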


However, by a result of Feghali and Merkel \cite{FeghaliMix2022}, the analogue of Theorem \ref{2colorp5}  on the class of $t$-chromatic $P_5$-free graphs may not be true  for some values of $t$.

\begin{thm}[\cite{FeghaliMix2022}]\label{7col}
For every positive integer $p$, there exists a $7p$-chromatic $P_5$-free graph such that  $\mathcal{R}_{8p}(G) $ is disconnected.
 \end{thm}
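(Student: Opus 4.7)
For each $p \geq 1$ I would construct a $7p$-chromatic $P_5$-free graph $G$ admitting a \emph{frozen} $8p$-coloring $\Phi$, i.e., a proper $8p$-coloring under which every vertex sees each of the other $8p-1$ colors in its open neighborhood. Such a $\Phi$ is an isolated vertex of $\mathcal{R}_{8p}(G)$ because no single-vertex recoloring is available, while any proper $7p$-coloring of $G$ (which uses fewer than $8p$ colors, and so differs from $\Phi$) is another vertex of $\mathcal{R}_{8p}(G)$; hence $\mathcal{R}_{8p}(G)$ has at least two components and is disconnected.

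The construction has two stages. In the \emph{base stage} I exhibit a single $7$-chromatic $P_5$-free graph $G_0$ together with a frozen $8$-coloring $\phi_0$. In the \emph{scaling stage}, for general $p$, I take $G$ to be the graph join $G_0^{(1)} + G_0^{(2)} + \cdots + G_0^{(p)}$ of $p$ vertex-disjoint copies of $G_0$, and define $\Phi$ on the $i$-th copy as $\phi_0$ relabeled into the palette $\{8(i-1)+1, \ldots, 8i\}$. Three verifications then suffice. First, $G$ is $P_5$-free: any induced path using vertices from two different copies can contain at most three vertices, since a fourth vertex on either side would be adjacent (via the join) to two already-chosen vertices on the opposite side, contradicting inducedness; hence any induced $P_5$ must lie in a single copy of $G_0$, which is $P_5$-free by assumption. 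Second, $\chi(G) = 7p$, because chromatic number is additive under joins. Third, $\Phi$ is frozen: a vertex $v$ in the $i$-th copy sees the seven other colors of its own block inside its copy by freezing of $\phi_0$, and, via the join to the other $p-1$ copies, sees every color outside its block, for a total of $8p - 1$ colors.

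The main obstacle is the base construction of $(G_0, \phi_0)$. One must simultaneously arrange chromatic number exactly $7$ (not $8$), minimum degree at least $7$ (forced by the frozen $8$-coloring, which demands seven distinct colors in every neighborhood), and the absence of any induced $P_5$. The classical frozen-coloring gadget $K_{k,k}$ minus a perfect matching realizes frozen $k$-colorings but already fails $P_5$-freeness for every $k \geq 3$, so a new $P_5$-free construction is required. A natural line of attack is to start from a small $P_5$-free graph of chromatic number $7$ (for instance, a carefully chosen blow-up of a smaller template, or a split-like graph whose bipartite part is $2K_2$-free so that $P_5$-freeness is preserved) and then add a controlled amount of extra structure supporting an eighth color class that realises a frozen coloring while keeping the chromatic number at $7$; the $P_5$-freeness condition in such split/blow-up templates reduces to a tractable $2K_2$-freeness condition on the relevant bipartite subgraphs. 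Once this base pair is in hand and verified, the scaling stage and the disconnectedness deduction above are routine, completing the proof.
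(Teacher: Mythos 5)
The statement you are proving is quoted in the paper as a known result of Feghali and Merkel (their reference \cite{FeghaliMix2022}); the paper itself gives no proof, so your proposal must be judged on its own. Your overall architecture is the right one and matches the known strategy: exhibit a $7$-chromatic $P_5$-free graph $G_0$ with a frozen $8$-coloring, then take the join of $p$ disjoint copies. The routine parts are correct as you state them: a join of $P_5$-free graphs is $P_5$-free because an induced path meeting two join-classes in at least two vertices each would contain four pairwise join-adjacent vertices, and one meeting a class in a single vertex $v$ forces $v$ to be adjacent to every path vertex on the other side, so at most three vertices are possible; chromatic number is additive under joins; the block-relabeled coloring is frozen; and an isolated vertex of $\mathcal{R}_{8p}(G)$ together with any $7p$-coloring yields disconnectedness.

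The genuine gap is that you never construct $(G_0,\phi_0)$, and that construction is the entire content of the theorem. You correctly identify it as ``the main obstacle'' but then offer only a heuristic (``a carefully chosen blow-up,'' ``a split-like graph whose bipartite part is $2K_2$-free'') with no candidate graph, no verification of $\chi(G_0)=7$ rather than $8$, and no verification of $P_5$-freeness. Simultaneously achieving chromatic number exactly $7$, a frozen $8$-coloring, and $P_5$-freeness is delicate --- as you note, the standard gadget $K_{8,8}$ minus a perfect matching fails $P_5$-freeness --- so this step cannot be waved through. Ironically, the paper you are working in supplies exactly the missing ingredient: its Section~4 constructs, for every $t\geq 4$ and $t+1\leq k\leq\binom{t}{2}$, a $t$-chromatic $P_5$-free graph $G_{t,k}$ with a frozen $k$-coloring, and since $8\leq\binom{7}{2}=21$ the graph $G_{7,8}$ serves as your $G_0$. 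Had you produced such an explicit base graph (or an equivalent one), your scaling and disconnectedness arguments would complete the proof; as written, the proof is incomplete at its central point.
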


In addition,  they also  proposed the  following open problem.
\begin{problem}[\cite{FeghaliMix2022}]\label{pro36}
For any $3\leq k\leq6$, whether $\mathcal{R}_{k+1}(G) $ is connected for any $k$-colorable $P_5$-free graph $G$.  
\end{problem}

We shall resolve Problem \ref{pro36}  in this paper. In fact, we prove a stronger result as follows, which offers a detailed classification of the connectivity of $ R _k(G) $ concerning  $t$-chromatic $P_5$-free graphs $G$ for cases $t=3$, and $t\geq4$ with $t+1\leq k \leq {t\choose2}$.

\begin{thm}\label{mainthm1}
If $G$ is a $3$-chromatic $P_5$-free graph, then $\mathcal{R}_{k}(G)$ is connected for each $k\geq4$.
 \end{thm}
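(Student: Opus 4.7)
The plan is to argue by induction on $|V(G)|$, combining the $2$-chromatic result of Theorem~\ref{2colorp5} with the structural properties of $P_5$-free graphs. Without loss of generality I may take $G$ to be connected, since the reconfiguration problem decomposes over connected components. Also, $\chi(G) = 3$ forces $\omega(G) \le 3$, so $G$ is $K_4$-free; and $P_5$-freeness forces $\mathrm{diam}(G) \le 3$ per component (since shortest paths are induced, a pair of vertices at distance $\ge 4$ would produce an induced $P_5$).

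First I would handle the case where $G$ has a universal vertex $v$. Then $G - v$ is $P_5$-free with $\chi(G - v) = \chi(G) - 1 = 2$, so by Theorem~\ref{2colorp5}, $\mathcal{R}_{k-1}(G - v)$ is connected (since $k - 1 \ge 3$). To lift this to $\mathcal{R}_k(G)$, I take two $k$-colorings $\alpha, \beta$ of $G$. If $\alpha(v) = \beta(v) = c$, I apply the connectivity of $\mathcal{R}_{k-1}(G - v)$ with palette $\{1, \ldots, k\} \setminus \{c\}$ and keep $v$ fixed at $c$ throughout. If $\alpha(v) \ne \beta(v)$, I first transform $\alpha|_{G - v}$, via $\mathcal{R}_{k-1}(G - v)$, into a $2$-coloring of $G - v$ using two colors drawn from $\{1, \ldots, k\} \setminus \{\alpha(v), \beta(v)\}$ (a set of size $k - 2 \ge 2$); then recolor $v$ directly from $\alpha(v)$ to $\beta(v)$ (valid, since no neighbor of $v$ bears $\beta(v)$); then transform $G - v$ to $\beta|_{G-v}$ via $\mathcal{R}_{k-1}(G - v)$ once more.

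For the case without a universal vertex, I would invoke the theorem of Bacs\'o and Tuza: every connected $P_5$-free graph has a dominating clique or a dominating induced $P_3$. Because $\omega(G) \le 3$, any dominating clique has size at most $3$, so $G$ has a dominating clique of size $2$ or $3$, or a dominating induced $P_3$. In each such configuration, I would partition $V(G)$ according to the adjacency pattern with the dominating set and search for a vertex $v$ whose deletion yields either a $3$-chromatic $P_5$-free graph (to apply induction) or a $2$-chromatic $P_5$-free graph (to apply Theorem~\ref{2colorp5}), combined with a standard degree-based lifting lemma to transfer connectivity from $\mathcal{R}_k(G - v)$ or $\mathcal{R}_{k-1}(G - v)$ back to $\mathcal{R}_k(G)$.

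The main obstacle will be this second case, especially the subcase when $G$ admits only a dominating induced $P_3$ and no proper dominating clique of size $2$ or $3$. Without a single pivot vertex, the adjacency patterns between the dominating structure and the rest of $V(G)$ are richer, and converting the $P_5$-freeness constraints into explicit recoloring sequences requires delicate case analysis. I expect this to be handled by combining $\omega(G) \le 3$ (which tightly limits the colorings available on any triangle in or near the dominating set) with modular decomposition whenever $G$ has a non-trivial module, and by locating a low-degree vertex in the subgraph induced by the non-dominating part of $V(G)$ to trigger the inductive reduction.
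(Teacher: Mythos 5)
Your universal-vertex case is fine as far as it goes, but the rest of the proposal is a plan rather than a proof, and the plan leaves the entire difficulty unresolved. Everything after ``For the case without a universal vertex'' is a list of hopes: you invoke Bacs\'o--Tuza to get a dominating clique or dominating induced $P_3$, and then say you would ``search for a vertex $v$ whose deletion yields'' a smaller instance, ``combined with a standard degree-based lifting lemma.'' That lifting lemma requires $d(v)\leq k-2$, and you give no argument that a vertex of degree at most $2$ (for $k=4$) exists, nor any alternative reduction when it does not; you yourself flag the dominating-$P_3$ subcase as the ``main obstacle'' and then defer it to ``delicate case analysis'' and ``modular decomposition'' without carrying either out. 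Since this subcase is essentially the whole content of the theorem, the proposal has a genuine gap rather than a fixable oversight.

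For comparison, the paper's proof does not follow the dominating-set route at all. It takes a minimum counterexample $G$ to the stronger $3$-colorable statement (Theorem~\ref{mainthm1.1}), shows $\delta(G)\geq3$ and that $G$ has no false twin (Claims~\ref{dG} and~\ref{nft}, which are the rigorous versions of your ``lifting'' step), and then proves via substantial structural arguments that $G$ has no induced gem, no induced $C_5$, and no induced $\overline{P_5}$ (Claims~\ref{ngem}--\ref{nP5}); the $C_5$ case leans on the Chudnovsky--Karthick--Maceli--Maffray classification of $(P_5,\text{gem})$-free graphs with an induced $C_5$ (Lemma~\ref{Pgem}) together with a vertex-identification trick. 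The argument then terminates by citing the Feghali--Fiala theorem that $3$-chromatic $(P_5,C_5,\overline{P_5})$-free graphs are $k$-mixing for $k\geq4$ (Theorem~\ref{3p5}). Your outline identifies no analogous landing point: without either a result like Theorem~\ref{3p5} to reduce to, or an explicit recoloring argument for the dominating-$P_3$ configuration, the induction never closes.
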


 \begin{thm}\label{mainthm2}
 For any $t\geq4$ and  $t+1\leq k\leq\binom{t}{2}$, there exists a  $t$-chromatic $ P_5 $-free graph $G$ 
with $\mathcal{R}_{k}(G)$ being disconnected.
 \end{thm}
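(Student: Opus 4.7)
The plan is to prove $\mathcal{R}_k(G)$ is disconnected by constructing, for each pair $(t, k)$ with $t \geq 4$ and $t + 1 \leq k \leq \binom{t}{2}$, a $t$-chromatic $P_5$-free graph $G_{t,k}$ that admits a \emph{frozen} $k$-coloring $\phi$ --- that is, a proper $k$-coloring in which every vertex has a neighbor of each of the other $k-1$ colors. Any frozen coloring is an isolated vertex of $\mathcal{R}_k(G_{t,k})$. On the other hand, since $\chi(G_{t,k}) = t < k$, the graph also admits $k$-colorings that use fewer than $k$ of the colors (any proper $t$-coloring, viewed as a $k$-coloring); such colorings have recoloring moves available to an unused color and so are not isolated. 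Hence $\mathcal{R}_k(G_{t,k})$ has at least two components and is disconnected.

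This reduces the theorem to the construction of $G_{t,k}$. I would start from a base clique $K = \{v_1, \ldots, v_t\}$ colored $\phi(v_i) = i$ (which forces $\chi(G_{t,k}) \geq t$) and attach, for each extra color $c \in \{t+1, \ldots, k\}$, a carefully designed independent \emph{gadget} $A_c$ (the color class of $c$) together with edges to $K$ and to other gadgets, so as to ensure that (i) every $v_i \in K$ has a neighbor in $A_c$, (ii) every $a \in A_c$ has a neighbor of every color in $\{1, \ldots, k\} \setminus \{c\}$, and (iii) $G_{t,k}$ still admits a proper $t$-coloring (matching the obvious lower bound). My plan is to present a master construction for the extremal case $k = \binom{t}{2}$ and then derive the examples for smaller $k$ by deleting some of the gadgets; it remains to check that this operation preserves $P_5$-freeness and that the restricted coloring stays frozen on the remaining colors.

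The main obstacle I foresee is verifying $P_5$-freeness. Attaching many independent ``satellite'' vertices to a shared clique easily creates induced paths of the form $a$--$v_i$--$v_j$--$a'$--$v_l$, where $a, a'$ lie in different gadgets and $v_i, v_j, v_l \in K$. To rule these out one must control, as $a$ ranges over the non-clique vertices, the neighborhoods $N(a) \cap K$: the natural structural condition to impose is that these neighborhoods form a laminar or interval-like family on $\{1, \ldots, t\}$, which forces any two non-adjacent gadget vertices sharing a clique neighbor to also share every clique neighbor that could serve as the middle of a $P_5$. Once this laminarity is in place, a finite case analysis over the distribution of the vertices of any putative induced $P_5$ (all five in $K$; four in $K$ and one in a gadget; three in $K$ and two in gadgets; and so on) eliminates every case. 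With $P_5$-freeness secured, the $t$-coloring needed to pin down $\chi(G_{t,k}) = t$ can be obtained by recoloring each $a \in A_c$ with a color in $\{1, \ldots, t\} \setminus N(a)$ chosen consistently with the laminar structure, and checking that $\phi$ itself is frozen follows directly from condition (ii) by construction.
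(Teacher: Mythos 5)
Your high-level reduction is the same as the paper's: build a $t$-chromatic $P_5$-free graph admitting a frozen $k$-coloring; the frozen coloring is an isolated vertex of $\mathcal{R}_k(G)$ while, since $\chi(G)=t<k$, there are also non-isolated $k$-colorings, so $\mathcal{R}_k(G)$ is disconnected. The problem is that the construction, which is the entire content of the theorem, is only a sketch, and the architecture you describe is internally inconsistent. If the base clique $K=\{v_1,\dots,v_t\}$ carries colors $1,\dots,t$ under $\phi$ and every remaining vertex lies in some gadget $A_c$ with $c\in\{t+1,\dots,k\}$, then the $\phi$-color class of each $i\le t$ is the singleton $\{v_i\}$. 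Your freezing condition (ii) then forces every gadget vertex $a$ to be adjacent to $v_i$ for \emph{every} $i\in\{1,\dots,t\}$, since $v_i$ is the only vertex of color $i$ that $a$ could see. Hence $K\cup\{a\}$ is a clique of size $t+1$ and $\chi(G)\ge t+1$, contradicting condition (iii) and the requirement that $G$ be $t$-chromatic. No graph satisfies (i)--(iii) simultaneously, so the laminar-family and case-analysis steps you defer would have nothing to apply to.

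The fix requires abandoning singleton color classes in the frozen coloring. The paper's construction takes $2k$ vertices with every $\phi$-class of size exactly two, assigns to the $i$-th class a distinct $2$-element subset $\{\alpha(u_{2i-1}),\alpha(u_{2i})\}$ of $\{1,\dots,t\}$ (possible precisely because $k\le\binom{t}{2}$; the condition $k\ge t+1$ is used to seed the first $t$ pairs with a Hamiltonian cycle of $K_t$ so that a $t$-clique appears), and declares $u\sim v$ iff $\phi(u)\ne\phi(v)$ and $\alpha(u)\ne\alpha(v)$. Freezing is then automatic: a vertex sees color $j$ because the two vertices of $\phi$-class $j$ have distinct $\alpha$-values, so it is adjacent to at least one of them. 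The two superimposed colorings also make the $P_5$-freeness verification a short argument about the possible color patterns $ababa$ and $abcab$ along an induced $P_5$, rather than an open-ended case analysis. As written, your proposal both rests on an impossible specification and leaves its hardest verification unexecuted, so it does not establish the theorem.
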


The organization of this paper is as follows.  Preliminaries are presented in the next section. In Sections \ref{main1} and \ref{main2}, we will prove Theorem~\ref{mainthm1} and  Theorem~\ref{mainthm2}, respectively. An open problem is proposed in a subsequent section.

\section{Preliminaries}\label{preli}

Given a graph $G=(V(G), E(G))$, we use $|V(G)|$ to denote the number of vertices,
$\delta(G)$  the minimum degree and  $diam(G)$ the diameter of $G$, respectively.  For any $x\in V(G)$, let $d_G(x)$  and $N_G(x)$ denote the degree   and neighborhood of $x$ in  $G$, respectively. Sometimes we omit the sign $G$ if there is no conflict occurs, such as using $d(x)$ and  $N(x)$ instead of $d_G(x)$ and $N_G(x)$. Let $N_{G}[x]=N_{G}(x)\cup\{x\}$. By convenience, we use $x\sim y$ to denote that $x$ is adjacent to $y$ and $x\not\sim y$ to denote that $x$ is not adjacent to $y$. A {\it clique} of $G$ is a set of mutually adjacent vertices, and that the maximum
size of a clique, the {\it clique number} of $G$, is denoted by $\omega(G)$. An ordered vertex pair $(x, y)$ is a {\it false twin} if $x\not\sim y$  and $N_G(x)\subseteq N_G(y)$.
For any two subsets $X$ and $Y$ of
$V(G)$, we denote by $[X, Y]_{G}$ the set of edges that has one end in $X$ and the other in $Y$. We say
that $X$ is {\it complete} to $Y$ or $[X, Y]_{G}$ is {\it complete} if every vertex in $X$ is adjacent to every vertex in $Y$. The subgraph of $G$ {\it induced} by $X$ is denoted by $G[X]$. For convenience, we simply write $G-X$ for $G[V(G)\setminus X]$. A set $X$ is called a {\it homogeneous\ set} if each vertex in $V(G)\setminus X$ is either complete to $X$ or has no neighbor in $X$. Two graphs $G$ and $H$ are {\it isomorphic}, denoted by $G\cong H$, if there are bijections $\theta:V(G)\rightarrow V(H)$ and $\phi:E(G)\rightarrow E(H)$ such that $\psi_G(e)=uv$ if and only if $\psi_H(\phi(e))=\theta(u)\theta(v)$.

Let  $C_n$  denote the cycle  on $n$ vertices.  
A graph is {\it $P_\ell$-free} if it does not contain 
$P_\ell$ as an induced graph, where $\ell$ is a positive integer.
A {\it gem} is a graph consisting of a vertex $v$ and an induced  path of lengh $3$ such that the vertex $v$ is complete to the vertex set of the path.  An {\it expansion} of a graph $H$ is any graph $G$ such that $V(G)$ can be partitioned into $|V(H)|$
nonempty sets $Q_v$, where $v\in V(H)$, such that $[Q_u,  Q_v]_{G}$ is complete if $uv\in E(H)$, and $[Q_u, Q_v]_{G}=\emptyset$ if
$uv\not\in E(H)$. Note that $H$ is an expansion of itself. An expansion of $H$ is a $P_4$-free
expansion if each $Q_v$ induces a $P_4$-free graph. 

For convenience,
$G$  is called {\it k-mixing} if $\mathcal{R}_k(G)$ is connected. For two $k$-colorings $\phi_1$ and $\phi_2$ of $G$, $\phi_1$ can be {\it transformed} to $\phi_2$ if there is a path from $\phi_1$ to $\phi_2$ in $\mathcal{R}_k(G)$. 
 A $k$-coloring of $G$ is {\it frozen} if it forms an isolated  vertex in $\mathcal{R}_k(G)$, in other words, if every vertex of $G$ whose closed neighborhood contains all $k$ colors, then it is a frozen $k$-coloring of $G$. 
Note that
the existence of a frozen $k$-coloring of a graph immediately implies that the graph is not
$k$-mixing.

Let $H_k$ be the complete bipartite graph on $2k$ vertices with two equal parts, and $M_k$ be a graph obtained from $H_k$ by removing a perfect matching. By construction of $M_k$, there exists a $k$-coloring  such that every color appears exactly once in each part of $M_k$. In addition, such a $k$-coloring is frozen.
The following proposition was proved in
\cite{CerecedaConn2008}.

\begin{proposition}[\cite{CerecedaConn2008}]\label{RiRj}
For  $k\geq 3$, the graph $M_k$ is a bipartite graph that is $i$-mixing for $3\leq i\leq k-1$ and $i\geq k+1$, but not $k$-mixing.
\end{proposition}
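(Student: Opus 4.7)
My plan is to treat the three parts of Proposition~\ref{RiRj} separately, with the range $3 \le i \le k-1$ carrying the bulk of the work. Write the bipartition of $M_k$ as $A = \{a_1, \ldots, a_k\}$ and $B = \{b_1, \ldots, b_k\}$, with $a_jb_j$ (for $1 \le j \le k$) the edges of $K_{k,k}$ removed to form $M_k$; thus $a_j \sim b_l$ iff $j \ne l$, and every vertex has degree $k-1$. Bipartiteness is immediate since $M_k$ is a spanning subgraph of $H_k=K_{k,k}$. For the failure of $k$-mixing, I take the coloring $\phi$ with $\phi(a_j)=\phi(b_j)=j$: properness follows because every edge $a_jb_l \in E(M_k)$ has $j \ne l$, and $N[a_j]=\{a_j\}\cup\{b_l : l \ne j\}$ already realizes all colors of $\{1,\ldots,k\}$ (symmetrically for $b_j$), so $\phi$ is an isolated vertex of $\mathcal{R}_k(M_k)$. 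For $i \ge k+1$, the inequality $i \ge \Delta(M_k)+2$ combined with Jerrum's bound recalled in the introduction yields the connectedness of $\mathcal{R}_i(M_k)$.

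The substantive case is $3 \le i \le k-1$, which I would settle by showing (i) every $i$-coloring is connected in $\mathcal{R}_i(M_k)$ to a canonical coloring of the form $\phi(A)\equiv c_A$, $\phi(B)\equiv c_B$ with $c_A \ne c_B$, and (ii) all such canonical colorings are mutually connected. For (i), since $|A|=k>i$, pigeonhole produces a color $c_A$ used on two distinct vertices $a_j, a_{j'}\in A$; any $b_l \in B$ is adjacent to at least one of them (the single non-edge $b_l \not\sim a_l$ rules out only one index), so $c_A$ cannot occur in $B$. Hence each vertex of $A$ has no neighbor of color $c_A$ and may be recolored to $c_A$ one vertex at a time; once $A$ is monochromatic in $c_A$, every $b_j$ sees only $c_A$ in its neighborhood and can be driven to any $c_B \ne c_A$. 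For (ii), any two canonical colorings that are not exact swaps of each other are connected in at most two phases: recolor $A$ first when $c_A' \ne c_B$, otherwise $B$ first (feasible since then $c_B' \ne c_A$, for if $c_B'=c_A$ and $c_A'=c_B$ we are in the swap case). For the swap $(c_A,c_B)\to(c_B,c_A)$, I use a third color $c^*\in\{1,\ldots,i\}\setminus\{c_A,c_B\}$, available because $i \ge 3$, and traverse the three-phase path $(c_A,c_B)\to(c^*,c_B)\to(c^*,c_A)\to(c_B,c_A)$, each phase being a sequence of legal single-vertex recolorings since the opposite side is monochromatic and the target color differs from it.

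The main obstacle I foresee is precisely this swap step: a two-phase recolor fails exactly when $c_A'=c_B$ and $c_B'=c_A$ simultaneously, and the auxiliary color $c^*$ is what breaks the deadlock. The only arithmetic inputs beyond the fact that each phase consists of single-vertex recolorings into colors absent from the relevant neighborhood are $k > i$ (for the pigeonhole in step (i)) and $i \ge 3$ (for the rotation in step (ii))—exactly the hypotheses of the range we are treating.
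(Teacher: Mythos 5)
Your proof is correct. Note, however, that the paper does not actually prove Proposition~\ref{RiRj}: it imports the statement wholesale from \cite{CerecedaConn2008}, and the only fragment it verifies itself is the failure of $k$-mixing, via exactly the frozen coloring you exhibit (each color appearing once in each part). So there is no in-paper argument to compare against for the ranges $3\leq i\leq k-1$ and $i\geq k+1$; your treatment of those is a complete, self-contained reconstruction. Your handling of $i\geq k+1$ via Jerrum's bound $i\geq\Delta(M_k)+2=k+1$ is the standard shortcut, and your argument for $3\leq i\leq k-1$ — pigeonhole on $|A|=k>i$ to find a color repeated on $A$, hence absent from $B$ (since each $b_l$ misses only $a_l$), collapse each side to a monochromatic class, and break the swap deadlock $(c_A,c_B)\to(c_B,c_A)$ by rotating through a third color $c^*$ (available precisely because $i\geq 3$) — is sound at every step; in particular the invariant that $B$ never carries color $c_A$ while $A$ is being recolored is maintained, and each phase of the rotation recolors one side against a differently colored monochromatic side. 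This is essentially the argument of Cereceda, van den Heuvel, and Johnson, so nothing further is needed.
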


Note that $M_k$ is  $P_6$-free, $\chi(M_k)=\omega(M_k)=2$, and $M_k$ has a frozen $k$-coloring. So we can derive Proposition \ref{tp6} directly from Proposition \ref{p6free}.

\begin{proposition}\label{p6free}
Let $\ell\geq 3$, $t\geq 2$ and $k\geq t+1$. If $G$ is a $P_\ell$-free graph satisfying $\chi(G)=\omega(G)=t$ and $G$ has a frozen $k$-coloring, then for any $s\geq 1$, there exists a $P_\ell$-free graph $G'$ satisfying $\chi(G')=\omega(G')=t+s$ and $G$ has a frozen $(k+s)$-coloring. 
\end{proposition}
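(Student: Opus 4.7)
The plan is to take $G' := G \vee K_s$, the complete join of $G$ with a fresh clique $K_s$ of size $s$ on $s$ new vertices (every vertex of $K_s$ is made adjacent to every vertex of $G$). The chromatic and clique numbers behave as desired almost by construction: since the $s$ join vertices form a clique and are all adjacent to every vertex of $G$, any proper coloring of $G'$ must use $s$ colors on $K_s$ that are disjoint from those used on $G$, giving $\chi(G') = \chi(G) + s = t+s$; and likewise $\omega(G') = \omega(G) + s = t+s$. To build the frozen coloring, take the hypothesized frozen $k$-coloring of $G$, which necessarily uses all $k$ colors (otherwise some closed neighborhood would miss a color), and assign the $s$ new colors $k+1,\ldots,k+s$ to the $s$ vertices of $K_s$. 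For any $v\in V(G)$, $N_{G'}[v] \supseteq N_G[v]\cup V(K_s)$ contains every color in $\{1,\ldots,k+s\}$, and for any $v \in V(K_s)$, $N_{G'}[v] = V(G')$, which again contains every color. So the extended coloring is a frozen $(k+s)$-coloring of $G'$.

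The remaining and slightly delicate step is verifying $P_\ell$-freeness of $G'$. I would argue that for $\ell \geq 4$, no induced $P_\ell$ in $G'$ can contain a vertex of $K_s$. Suppose an induced path $P = v_1v_2\cdots v_m$ in $G'$ contains some $v_i \in V(K_s)$. Since $v_i$ is complete to $V(G)$, any $G$-vertex appearing in $P$ must be adjacent to $v_i$ in $P$, hence must be at position $i-1$ or $i+1$. Moreover, no other $K_s$-vertex can appear at position $i\pm 2$ or farther, since it would either be adjacent to $v_i$ (two $K_s$-vertices are always adjacent) or violate induced non-adjacency. Running this through the small-index cases shows $m \leq 3$. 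For $\ell \geq 4$ this is less than $\ell$, so every induced $P_\ell$ of $G'$ lies in $G$, contradicting that $G$ is $P_\ell$-free. For the edge case $\ell = 3$, the hypothesis is vacuous: a $P_3$-free graph is a disjoint union of cliques with maximum clique size $\omega(G)=t$, so each closed neighborhood has at most $t<k$ colors, making a frozen $k$-coloring impossible; thus there is nothing to prove.

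The main obstacle I anticipate is the induced-path analysis in the join: one has to be careful that a $K_s$-vertex sitting at an endpoint of $P$ still forces the path to be short, because the penultimate vertex of $G$ on $P$ would be adjacent to the $K_s$-endpoint, excluding any further extension into $G$. Once that observation is made precise, everything else is bookkeeping. Note also that Proposition~\ref{tp6} follows immediately: starting from $G = M_k$ (which is $P_6$-free, bipartite with $\chi = \omega = 2$, and admits a frozen $k$-coloring by Proposition~\ref{RiRj}) and iterating the construction $s$ times produces, for each $t\geq 2$ and $k\geq t+1$, a $P_6$-free, hence $P_\ell$-free for any $\ell\geq 6$, $t$-chromatic graph with a frozen $k$-coloring.
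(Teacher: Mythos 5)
Your proof is correct, but it takes a genuinely different route from the paper's. The paper does not use a join: it picks the color class $A=\{v_1,\dots,v_q\}$ of a frozen $k$-coloring that meets a maximum clique, adds a new \emph{independent} set $C=\{u_1,\dots,u_q\}$ with each $u_i$ joined to $N_G[v_i]$, and colors all of $C$ with the single new color $k+1$; frozenness then rests on $A$ being a dominating set, $P_\ell$-freeness on each $u_i$ being replaceable by $v_i$ in any induced path, and the whole operation is iterated $s$ times. Your construction $G'=G\vee K_s$ gets all $s$ steps in one shot, and the key verification is much lighter: a universal vertex has degree $|V(G')|-1$ but degree at most $2$ on an induced path, so it cannot lie on an induced $P_m$ with $m\geq 4$, whence every induced $P_\ell$ ($\ell\geq 4$) of $G'$ lives in $G$. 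Your observation that the $\ell=3$ case is vacuous (a $P_3$-free graph with $\omega=t<k$ has closed neighborhoods too small to carry $k$ colors) is needed precisely because the join \emph{does not} preserve $P_3$-freeness, and it is a point the paper's statement glosses over; the paper's twin-based construction handles all $\ell\geq 3$ uniformly but at the cost of a more delicate induced-path argument. Both approaches yield Proposition~\ref{tp6} from $M_k$ and Proposition~\ref{RiRj} in the same way. (Minor nits: your chromatic-number sentence only argues the lower bound $\chi(G')\geq \chi(G)+s$ directly, with the upper bound implicit from concatenating colorings, and the claim that a frozen $k$-coloring is onto $\{1,\dots,k\}$ deserves the one-line justification you give; neither affects correctness.)
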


\begin{proof}
Let $G$ be a $P_\ell$-free graph satisfying $\chi(G)=\omega(G)=t$ and $G$ has a frozen $k$-coloring. Let $c$ be a frozen $k$-coloring of $G$. Without out loss of generality, we assume that $v_1$ is contained in a maximum clique and $c(v_1)=1$. Let $A=\{v_1,v_2,\ldots v_q\}$ denote the vertex set with color $1$ under $c$ and $B=V(G)\setminus A$. It is worth noting that $A$ is a dominating set of $G$. Let $G_1$ be the graph obtained from $G$ by adding a new independent vertex set $C=\{u_1,u_2,\ldots,u_q\}$ such that for any $i\in\{1,2,\ldots,q\}$, $u_i$ is adjacent to all vertices in $N_{G}[v_i]$.

Note that $G_1$ is a $(t+1)$-partite graph and $G_1[u_1\cup N_{G}[v_1]]$ contains a clique with size $t+1$. So $\chi(G_1)=t+1$. Now we give a $(k+1)$-coloring $c'$ of $G_1$ as follows: $c'(u)=k+1$ for $u\in C$  and $c'(v)=c(v)$ for any $v\in V(G)$. Since $c$ is a frozen $k$-coloring and $A$ is a dominating set of $G$, we know that $c'$ is a frozen $(k+1)$-coloring of $G_1$.  It remains to prove that $G_1$ is $P_{\ell}$-free. Suppose that $G_1$ contains an induced $P_{\ell}$. Let $P$ denote an induced $P_{\ell}$ of $G_1$. Since $G$ is  $P_\ell$-free,  $P$ contains  at least one  vertex of $C$. For any $i\in\{1,2,\ldots,q\}$, if $P$ contains $u_i$, then  $P$ contains no $v_i$ 
 otherwise $P$ is not an induced path as 
 $N_{G}(u_i)=N_{G}(v_i)$. Then $G$ contains an induced $P_{\ell}$ as we  replace all vertices in $P$ that belong to $C$ with the corresponding vertices in $A$, a contradiction. 

Note that $G_1$ is a $P_\ell$-free graph, satisfying that $\chi(G_1)=\omega(G_1)=t+1$ and $G_1$ has a frozen $(k+1)$-coloring. So we can perform the same operation on $G_1$ as mentioned above for $G$. Repeat the operation $s$ times, we get a $P_\ell$-free graph $G_s$, satisfying that $\chi(G_s)=\omega(G_s)=t+s$ and $G_s$ has a frozen $(k+s)$-coloring.
\end{proof}

\section{Proof of Theorem~\ref{mainthm1}}\label{main1}

In this section, for convenience of the induction hypothesis, we prove the following stronger version of Theorem~\ref{mainthm1}.

\begin{thm}\label{mainthm1.1}
If $G$ is a $3$-colorable $P_5$-free graph, then $\mathcal{R}_{k}(G)$ is connected for each $k\geq4$.
\end{thm}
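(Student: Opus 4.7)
The plan is to proceed by strong induction on $|V(G)|$. The base case is trivial, and when $\chi(G)\le 2$ the result follows immediately from Theorem~\ref{2colorp5} (since $k\ge 4\ge 3$), so the substantive case is $\chi(G)=3$. The strengthening from ``$3$-chromatic'' to ``$3$-colorable'' in Theorem~\ref{mainthm1.1} is precisely what keeps the inductive hypothesis usable after reductions that may lower the chromatic number.

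My first set of reductions would dispose of the easy structural cases. If $G$ is disconnected, each component is a smaller $3$-colorable $P_5$-free graph, and since recolorings on distinct components are independent, the result follows from induction. If $G$ has a universal vertex $u$, then $G-u$ is $2$-colorable and $P_5$-free, so $\mathcal{R}_{k-1}(G-u)$ is connected by Theorem~\ref{2colorp5} (using $k-1\ge 3$); two $k$-colorings of $G$ can then be brought to a common color on $u$ by first reducing the coloring of $G-u$ to a $2$-coloring in $\{1,\ldots,k\}\setminus\{c_1,c_2\}$ (this frees a color slot for $u$), after which only moves inside $G-u$ are needed. If $G$ has a false twin pair $(x,y)$ or, more generally, a non-trivial homogeneous set, a modular-decomposition-style argument applies induction to $G-x$ (respectively, to the quotient and to the homogeneous set) and extends the reconfiguration back to $G$ using $N_G(x)=N_G(y)$ (respectively, the common adjacencies of the module).

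The main case, and the principal obstacle, is when $G$ is connected, prime, has no universal vertex, and $\chi(G)=3$. For this case I would invoke a classical structural result for $P_5$-free graphs guaranteeing either a dominating clique or a dominating induced $P_3$; since $\omega(G)\le 3$ this dominating set $D$ has size at most $3$. I would then partition $V(G)\setminus D$ by adjacency pattern to $D$ and use $P_5$-freeness to force strong constraints on edges between classes, so that each class induces a $2$-colorable (and often $P_4$-free) subgraph. The $k-3\ge 1$ spare colors, combined with Theorem~\ref{2colorp5} applied inside each class, let one shuffle any two $k$-colorings into a common canonical form on $D$ and then finish inside the classes. I expect the detailed analysis to split further according to whether $G$ contains a gem: $(P_5,\text{gem})$-free prime graphs admit a tight description as $P_4$-free expansions of small base graphs (which explains why these notions are set up in the preliminaries) and plug directly into this argument, whereas gems inside $G$ force a small rigid core that must be handled by a careful case analysis exploiting the universal vertex of the gem and the rigidity of primeness.
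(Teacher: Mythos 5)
Your opening reductions (induction on $|V(G)|$, disposing of disconnected graphs, low-degree vertices implicitly, and false twins) do match the first steps of the paper's argument, but the heart of your proposal --- the ``main case'' --- is a plan rather than a proof, and I do not believe it goes through as described. The dominating clique/dominating $P_3$ strategy gives you a set $D$ with $|D|\le 3$ and a partition of $V(G)\setminus D$ into adjacency classes, but with $k=4$ you have only one spare color, the classes are not mutually independent, and ``shuffle any two $k$-colorings into a common canonical form on $D$ and then finish inside the classes'' is exactly the difficulty, not a step. Concretely, the graph $G_{10}$ of Lemma~\ref{Pgem} (and the graphs in $\mathcal{G}_4$) are connected, prime, $3$-chromatic, $P_5$-free, gem-free, have no false twins and minimum degree at least $3$; your reductions do not touch them, and the dominating-set argument does not obviously dispose of them either. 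The paper has to handle these by an ad hoc device (Subclaim~\ref{A}: identify two nonadjacent vertices, check the quotient is still $P_5$-free and $3$-colorable, and show by explicit recoloring that every $4$-coloring can be pushed to one agreeing on the identified pair). Nothing in your sketch plays this role.

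The second, related gap is that you never identify a finishing theorem. The paper's strategy is to show that a minimal counterexample contains no induced gem (Claim~\ref{ngem}, a long $P_5$-freeness case analysis), no induced $C_5$ (via Lemma~\ref{Pgem} and the identification trick), and no induced $\overline{P_5}$, and then to invoke the known result that $3$-chromatic $(P_5,C_5,\overline{P_5})$-free graphs are $k$-mixing for $k\ge 4$ (Theorem~\ref{3p5}, Feghali--Fiala). Your proposal gestures at the $(P_5,\mathrm{gem})$-free structure theorem but uses it in the opposite direction (as a case of the dominating-set analysis rather than as input to an exclusion argument), and it omits Theorem~\ref{3p5} entirely, so even if every reduction you list were carried out there would be no base structure left to which the argument terminates. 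Two smaller issues: the general homogeneous-set/modular-decomposition reduction you invoke is not automatic for reconfiguration (only the false-twin case is routine, and that is all the paper uses), and the universal-vertex reduction, while plausible, is never needed. To repair the proposal you would essentially have to replace the main case with the paper's exclusion of gems, $C_5$ and $\overline{P_5}$ from a minimal counterexample, or else supply a genuinely new argument for the prime gem-free and gem-containing cases.
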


\subsection{Prerequisites}

To complete the proof of Theorem~\ref{mainthm1.1}, we need a result  proved in \cite{FeghaliWeak2020} as follows.

\begin{thm}[\cite{FeghaliWeak2020}]\label{3p5}
If $G$ is a $3$-chromatic $(P_5,C_5,\overline{P_5})$-free graph, then $\mathcal{R}_{k}(G) $ is connected for each $k\geq 4$.
 \end{thm}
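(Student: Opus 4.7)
The plan is to induct on $|V(G)|$, with a trivial base case. The first reduction is to connected $G$ with $\chi(G)=3$: for $\chi(G)\le 1$ the claim is immediate, for $\chi(G)=2$ it follows from Theorem~\ref{2colorp5}, and for disconnected $G$ the identity $\mathcal{R}_k(G_1\cup G_2)=\mathcal{R}_k(G_1)\,\square\,\mathcal{R}_k(G_2)$, together with the fact that each component of a $P_5$-free $3$-colorable graph is itself $P_5$-free and $3$-colorable, lets us handle components separately.

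With $G$ connected and $\chi(G)=3$, the key dichotomy is whether $G$ is also $(C_5,\overline{P_5})$-free. If so, the conclusion is immediate from Theorem~\ref{3p5}. Otherwise $G$ contains an induced $C_5$ or an induced $\overline{P_5}$, and the strategy is to leverage the $P_5$-free assumption to force a reducible structural feature.

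Concretely, I would fix an induced $S\in\{C_5,\overline{P_5}\}$ in $G$ and classify each $u\in V(G)\setminus V(S)$ according to its neighborhood trace $N(u)\cap V(S)$. The $P_5$-free property rules out most traces --- for example, if $S=C_5=v_1v_2v_3v_4v_5$ and $N(u)\cap V(S)=\{v_1,v_2\}$, then $uv_1v_5v_4v_3$ is an induced $P_5$, a contradiction. The surviving traces partition $V(G)$ into a small collection of blocks aligned with the structure of $S$, from which I aim to extract either (i) a pair of false twins $(x,y)$, which can be identified to produce a smaller $3$-colorable $P_5$-free graph $G'$ so that $\mathcal{R}_k(G')$ is connected by induction and a standard lifting argument transports recoloring sequences back to $\mathcal{R}_k(G)$; or (ii) a nontrivial homogeneous set $X$, in which case the quotient graph obtained by contracting $X$ is a smaller $3$-colorable $P_5$-free graph, and any recoloring sequence in $\mathcal{R}_k(G)$ can be built by first rearranging the colors outside $X$ (using induction on the quotient, which is again $P_5$-free and $3$-colorable) and then rearranging the colors inside $X$ (using induction on $G[X]$, whose number of available colors --- namely $k$ minus the number of colors used on the vertices complete to $X$ --- still exceeds $\chi(G[X])$ because $k\ge 4$ and $\chi(G)=3$ forces $\chi(G[X])\le 2$ as soon as at least one vertex is complete to $X$).

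The main obstacle, in my view, is proving that this structural analysis \emph{always} yields one of the two reducible features. The $\overline{P_5}$ case is considerably less symmetric than $C_5$, and certain exceptional configurations of vertices with mixed neighborhood type may require a separate treatment in which $k\ge 4$ is used directly to give some vertex enough color freedom to recolor around an obstruction, perhaps via a $P_4$-free expansion argument as signaled by the preliminaries. A secondary difficulty is careful bookkeeping at the interface of the homogeneous-set decomposition: one must arrange the outside recoloring so that at every intermediate step enough colors remain free at the boundary of $X$ for the inside recoloring to proceed without blocking.
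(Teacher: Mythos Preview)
Your proposal is circular. The hypothesis of Theorem~\ref{3p5} already assumes that $G$ is $(P_5,C_5,\overline{P_5})$-free, so your ``key dichotomy'' collapses: $G$ is \emph{always} $(C_5,\overline{P_5})$-free, and the step ``If so, the conclusion is immediate from Theorem~\ref{3p5}'' invokes the very statement you are trying to establish. The entire subsequent analysis --- classifying vertices by their trace on an induced $C_5$ or $\overline{P_5}$, extracting false twins or homogeneous sets --- is vacuous, since no such induced subgraph exists by hypothesis.

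What you have actually sketched is an outline for Theorem~\ref{mainthm1.1} (the paper's main result: every $3$-colorable $P_5$-free graph is $k$-mixing for $k\ge4$), a strictly stronger statement that \emph{uses} Theorem~\ref{3p5} as a black box in exactly the way you describe. The paper itself does not prove Theorem~\ref{3p5}; it is quoted from~\cite{FeghaliWeak2020} as a prerequisite, so there is no ``paper's own proof'' to compare against. A genuine proof of Theorem~\ref{3p5} must exploit the \emph{simultaneous} absence of $P_5$, $C_5$, and $\overline{P_5}$ --- for instance via the structure theory of $(P_5,\overline{P_5})$-free (weakly chordal) graphs --- rather than case-splitting on whether an induced $C_5$ or $\overline{P_5}$ is present.
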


We also need the following useful fact.

\begin{observation}\label{K3C5}
The graph, obtained from $C_5$ by expanding each of two vertices to  $K_2$, has chromatic number at least $4$.
\end{observation}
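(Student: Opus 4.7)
The plan is to case-split on whether the two expanded vertices of $C_5$ are adjacent or not, and in each case exhibit an obstruction to $3$-colorability.

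Write the vertices of $C_5$ as $v_1,v_2,v_3,v_4,v_5$ in cyclic order, let $H$ denote the graph obtained by expanding two of them to $K_2$, and call $Q_i$ the bag corresponding to $v_i$ (so $|Q_i|=2$ for the two expanded vertices and $|Q_i|=1$ otherwise). Recall from the definition of expansion that $[Q_i,Q_j]_H$ is complete whenever $v_iv_j\in E(C_5)$.

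In Case~1, I assume the two expanded vertices are adjacent in $C_5$; by symmetry, take them to be $v_1,v_2$. Then $Q_1\cup Q_2$ induces a $K_4$ in $H$, because each of $Q_1,Q_2$ is a $K_2$ and $[Q_1,Q_2]_H$ is complete. Hence $\chi(H)\ge\omega(H)\ge 4$.

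In Case~2, the two expanded vertices are non-adjacent in $C_5$; by symmetry, take them to be $v_1,v_3$, and write $Q_1=\{a,a'\}$, $Q_3=\{b,b'\}$. Suppose for contradiction that $H$ admits a proper $3$-coloring $\varphi$. Since $a,a'$ are adjacent, $\{\varphi(a),\varphi(a')\}$ has size $2$; as both $v_2$ and $v_5$ are complete to $Q_1$, they must each receive the unique remaining color, so $\varphi(v_2)=\varphi(v_5)$. Symmetrically, $\{\varphi(b),\varphi(b')\}$ has size $2$, and since $v_2$ and $v_4$ are both complete to $Q_3$, we get $\varphi(v_2)=\varphi(v_4)$. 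Combining, $\varphi(v_4)=\varphi(v_5)$, which contradicts $v_4v_5\in E(C_5)\subseteq E(H)$.

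These two cases cover all possibilities, so in either situation $\chi(H)\ge 4$. No serious obstacle is expected here; the only subtlety is handling both relative positions of the two expanded vertices, and the non-adjacent case is the slightly more interesting one since it relies on the forcing argument rather than a clique.
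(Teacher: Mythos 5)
Your proof is correct and follows essentially the same route as the paper: a $K_4$ when the two expanded vertices are adjacent, and a color-forcing contradiction around the $5$-cycle when they are not. The only difference is cosmetic — the paper propagates the forcing from the edge $x_4x_5$ toward the expanded bags, while you propagate from the bags toward that edge; these are contrapositive renderings of the same argument.
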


\begin{proof}
If two adjacent vertices of $ C_5$  are each expanded to $K_2$, then the new graph contains a $K_4$, so it's  chromatic number is at least $4$. Suppose that $C_5=x_1x_2x_3x_4x_5x_1$ and  two nonadjacent vertices are each expanded to $K_2$. Without loss of generality we assume that $x_1$ and $x_3$ are expanded. Let $F$ denote the new graph with five nonempty sets $Q_{x_i}, i\in \{1,2,\ldots,5\}$. Let $y_i\in Q_{x_i}$ for $i\in \{1,2,\ldots,5\}$ and let $y_1\sim y'_1$, $y_3\sim y'_3$, where $y'_1\in Q_{x_1}$ and $y'_3\in Q_{x_3}$. Let $c$ be a  $k$-coloring  of $F$.
Since $y_4\sim y_5$,  we have that $c(y_2)\neq c(y_5)$ or $c(y_2)\neq c(y_4)$.
If $k\leq3$, then we have $c(y_1)= c(y'_1)$ when $c(y_2)\neq c(y_5)$ and  $c(y_3)= c(y'_3)$ when $c(y_2)\neq c(y_4)$, a contradiction. So $\chi(F)\geq 4$.
\end{proof}

Before presenting the formal proof of Theorem~\ref{mainthm1.1}, we provide a characterization of  ($P_5$, gem)-free graphs containing an induced $C_5$, as established in  \cite{ChudnovskyGem2020}.

\begin{lem}[\cite{ChudnovskyGem2020}]\label{Pgem}
Let $G$ be a connected ($P_5$,gem)-free graph that contains an induced $C_5$. Then either $G\in \mathcal{H}$ or $G$ is a $P_4$-free expansion of either $G_1$, $G_2$,$\ldots$, $G_9$ or $G_{10}$, where $G_1, G_2,\ldots, G_{10}$ are graphs shown in Figure \ref{basicgraphs}, and the graph class $\mathcal{H}$ defined as follows:  for any $H\in \mathcal{H}$, $H$ is a connected $(P_5,gem)$-free graph and  $V(H)$ can be partitioned into seven nonempty sets $A_1,A_2,\dots,A_7$ such that:

$\bullet$ Each $A_i$ induces a $P_4$-free graph.

$\bullet$ $[A_1,A_2\cup A_5\cup A_6]$ is complete and $[A_1,A_3\cup A_4\cup A_7]= \emptyset$.

$\bullet$ $[A_3,A_2\cup A_4\cup A_6]$ is complete and $[A_3,A_5\cup  A_7]= \emptyset$.

$\bullet$ $[A_4,A_5\cup A_6]$ is complete and $[A_4,A_2\cup  A_7]= \emptyset$.

$\bullet$ $[A_2,A_5\cup A_6\cup A_7]=\emptyset$ and $[A_5,A_6\cup  A_7]= \emptyset$.

$\bullet$ The vertex set of each component of $G[A_7]$ is a homogeneous set.

$\bullet$ Every vertex in $A_7$ has a neighbor in $A_6$.
\end{lem}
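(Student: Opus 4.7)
The plan is to fix an induced $C_5 = v_1v_2v_3v_4v_5v_1$ in $G$ and to classify every other vertex $v$ of $G$ by its trace $N(v)\cap V(C_5)$ on that cycle. The key initial step is to use the $(P_5,\text{gem})$-free hypothesis to prune most attachment patterns: a vertex with a unique $C_5$-neighbor $v_i$ yields an induced $P_5$ obtained by walking from it around the cycle away from $v_i$; a vertex whose $C_5$-neighborhood is a consecutive pair $\{v_i,v_{i+1}\}$ likewise produces an induced $P_5$ by the same kind of walk; and a vertex adjacent to exactly four cycle vertices creates a gem together with the unique induced $P_4$ on those four. What survives is a short list of admissible patterns --- the empty set, the symmetric pair $\{v_{i-1},v_{i+1}\}$, the consecutive triple $\{v_{i-1},v_i,v_{i+1}\}$, the asymmetric triple $\{v_1,v_3,v_4\}$ (and its rotations), and the full cycle --- and these feed the seven candidate classes by placing into $A_i$ (for $i\leq 5$) the vertices centered at position $i$, into $A_6$ the asymmetric-triple vertices, and into $A_7$ the vertices with empty trace.

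The second step is to verify the adjacency relations between classes claimed in the statement. Each completeness claim, such as $[A_1,A_2]$ being complete, is obtained by assuming $uw\notin E(G)$ for some $u\in A_1$, $w\in A_2$ and using $u,w$ together with two or three well-chosen cycle vertices to exhibit an induced $P_5$ or a gem; each emptiness claim is established dually by supplying a hypothetical edge. The requirement that each $A_i$ induces a $P_4$-free graph is forced by a similar argument: an induced $P_4$ inside some $A_i$, extended by a carefully chosen witness from a neighboring class, would create either an induced $P_5$ or a gem depending on the type of $A_i$. The condition that the vertex set of every component of $G[A_7]$ is a homogeneous set is the most delicate local claim: two vertices $x,y$ of $A_7$ in the same $A_7$-component but with different $A_6$-neighborhoods would, via a shortest $xy$-path inside $G[A_7]$ continued by a single $A_6$-vertex separating $N(x)$ from $N(y)$, yield an induced $P_5$.

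Finally, the degenerate configurations must be recognized. If one or more of the candidate classes are empty, or if the chosen $C_5$ is unfavorable in the sense that a different induced $C_5$ would give a better partition, one reruns the classification from a more suitable $C_5$ and verifies by a finite case check that the remaining possibilities are precisely the $P_4$-free expansions of the ten small graphs $G_1,\ldots,G_{10}$ drawn in Figure~\ref{basicgraphs}. The main obstacle in the proof, and where the technical difficulty concentrates, is exactly this exceptional case analysis: one must ensure that the enumeration $G_1,\ldots,G_{10}$ is exhaustive, that each of their $P_4$-free expansions is genuinely $(P_5,\text{gem})$-free, and that these expansions separate cleanly from the generic class $\mathcal{H}$ so that no graph is counted twice and none slips through the classification.
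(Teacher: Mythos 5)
First, note that the paper does not prove Lemma~\ref{Pgem} at all: it is imported verbatim from Chudnovsky, Karthick, Maceli and Maffray~\cite{ChudnovskyGem2020}, where its proof occupies a substantial and intricate case analysis. So what you have written is not an alternative to the paper's argument but an attempted reconstruction of an external structure theorem, and as such it has genuine gaps. The most concrete error is in your list of admissible traces on the fixed $C_5$: you include ``the full cycle,'' but a vertex complete to $V(C_5)$ dominates the induced $P_4$ $v_1v_2v_3v_4$ and therefore forms a gem, by exactly the same argument you yourself use to exclude traces of size four. A subtler but more damaging problem is your claim that every completeness or emptiness relation between the classes $A_1,\dots,A_7$ can be verified by exhibiting a $P_5$ or a gem from a hypothetical violating edge or non-edge. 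If that were uniformly true, the theorem would have no exceptional outcomes; the entire reason the ten sporadic graphs $G_1,\dots,G_{10}$ appear in the conclusion is that certain adjacency violations do \emph{not} yield a forbidden induced subgraph directly, but instead force the graph into one of finitely many rigid configurations. Your sketch treats the generic case as if it were the whole proof and relegates the actual content of the theorem to ``a finite case check'' in the last paragraph, without identifying which configurations trigger it, why the list of ten is exhaustive, or why each $P_4$-free expansion of each $G_i$ is itself $(P_5,\mathrm{gem})$-free.

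There is also an unaddressed issue in your assignment of vertices to classes: you place all ``asymmetric-triple'' vertices into a single class $A_6$, but $A_6$ in the statement is complete to $A_1\cup A_3\cup A_4$ and anticomplete to $A_2\cup A_5$, which corresponds to one fixed rotation of that trace. Vertices realizing different rotations of the non-consecutive triple cannot coexist in one such class, and handling them (together with vertices of consecutive-triple type, which must be absorbed into $A_1,\dots,A_5$) is precisely where the branching into the exceptional graphs begins. In short, the strategy you describe is the correct opening move for a theorem of this kind, but the proposal does not constitute a proof; for the purposes of this paper the correct course is simply to cite~\cite{ChudnovskyGem2020}, as the authors do.
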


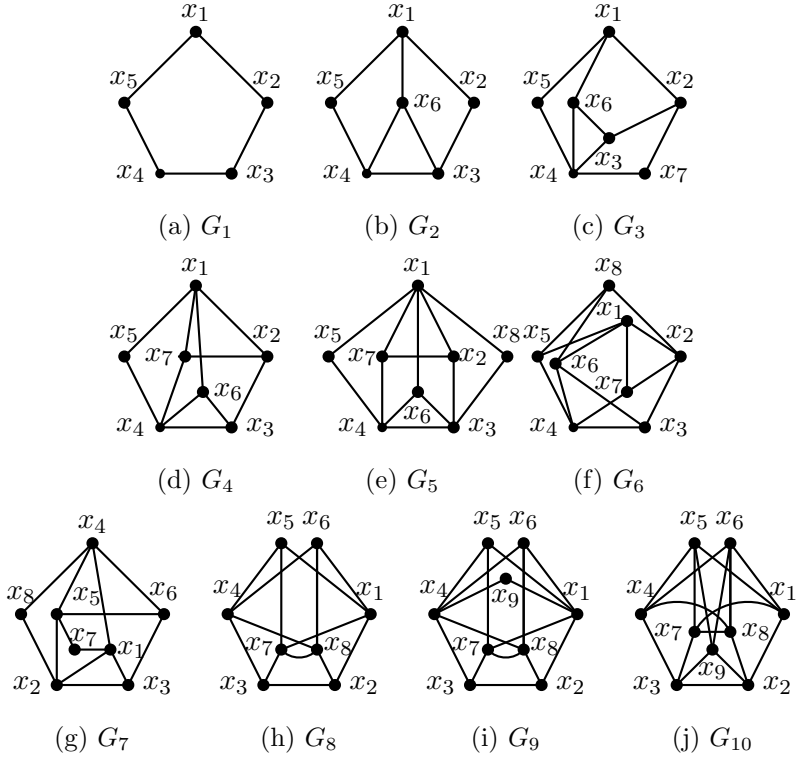
\begin{figure}[t]
	\centering
   \begin{subfigure}[t]{0.17\textwidth}
   	\centering
   	\begin{tikzpicture}[scale=0.47]
    	\draw [line width=0.8pt, black] (0,0) to (-1,2);
    	\draw [line width=0.8pt, black] (0,0) to (2,0);
    	\draw [line width=0.8pt, black] (-1,2) to (1,4);
    	\draw [line width=0.8pt, black] (1,4) to (3,2);
    	\draw [line width=0.8pt, black] (3,2) to (2,0);
    	
    	\draw [fill=black,line width=0.8pt] (0,0) node[left=0.5mm] {$x_4$} circle (3pt); 
    	\draw [fill=black,line width=0.8pt] (2,0) node[right=0.5mm] {$x_3$} circle (4pt); 
    	\draw [fill=black,line width=0.8pt] (3,2) node[above=0.5mm] {$x_2$} circle (4pt); 
    	\draw [fill=black,line width=0.8pt] (1,4) node[above] {$x_1$} circle (4pt); 
    	\draw [fill=black,line width=0.8pt] (-1,2) node[above=0.5mm] {$x_5$} circle (4pt); 
    \end{tikzpicture}
\caption{$G_1$}
\label{fig:G1}
\end{subfigure} 
\begin{subfigure}[t]{0.17\textwidth}
\centering
\begin{tikzpicture}[scale=.47]   
	\draw [line width=0.8pt, black] (0,0) to (-1,2);
	\draw [line width=0.8pt, black] (0,0) to (2,0);
	\draw [line width=0.8pt, black] (-1,2) to (1,4);
	\draw [line width=0.8pt, black] (1,4) to (3,2);
	\draw [line width=0.8pt, black] (3,2) to (2,0);
	\draw [line width=0.8pt, black] (1,4) to (1,2);
	\draw [line width=0.8pt, black] (1,2) to (0,0);
	\draw [line width=0.8pt, black] (1,2) to (2,0);
	
	\draw [fill=black,line width=0.8pt] (0,0) node[left=0.5mm] {$x_4$} circle (3pt); 
	\draw [fill=black,line width=0.8pt] (2,0) node[right=0.5mm] {$x_3$} circle (4pt); 
	\draw [fill=black,line width=0.8pt] (3,2) node[above=0.5mm] {$x_2$} circle (4pt); 
	\draw [fill=black,line width=0.8pt] (1,4) node[above] {$x_1$} circle (4pt); 
	\draw [fill=black,line width=0.8pt] (-1,2) node[above=0.5mm] {$x_5$} circle (4pt);
	\draw [fill=black,line width=0.8pt] (1,2) node[right=0.025mm] {$x_6$} circle (4pt);
\end{tikzpicture}
\caption{$G_2$}
\label{fig:G2}
\end{subfigure} 
\begin{subfigure}[t]{0.17\textwidth}
	\centering
	\begin{tikzpicture}[scale=0.47]   
		\draw [line width=0.8pt, black] (0,0) to (-1,2);
		\draw [line width=0.8pt, black] (0,0) to (2,0);
		\draw [line width=0.8pt, black] (-1,2) to (1,4);
		\draw [line width=0.8pt, black] (1,4) to (3,2);
		\draw [line width=0.8pt, black] (3,2) to (2,0);
		\draw [line width=0.8pt, black] (1,4) to (0,2);
		\draw [line width=0.8pt, black] (0,2) to (0,0);
		\draw [line width=0.8pt, black] (0,2) to (1,1);
		\draw [line width=0.8pt, black] (0,0) to (1,1);
		\draw [line width=0.8pt, black] (3,2) to (1,1);
		
		\draw [fill=black,line width=0.8pt] (0,0) node[left=0.5mm] {$x_4$} circle (3pt); 
		\draw [fill=black,line width=0.8pt] (2,0) node[right=0.5mm] {$x_7$} circle (4pt); 
		\draw [fill=black,line width=0.8pt] (3,2) node[above=0.5mm] {$x_2$} circle (4pt); 
		\draw [fill=black,line width=0.8pt] (1,4) node[above] {$x_1$} circle (4pt); 
		\draw [fill=black,line width=0.8pt] (-1,2) node[above=0.5mm] {$x_5$} circle (4pt);
		\draw [fill=black,line width=0.8pt] (0,2) node[right=0.025mm] {$x_6$} circle (4pt);
		\draw [fill=black,line width=0.8pt] (1,1) node[below=0.45mm] {$x_3$} circle (4pt);
	\end{tikzpicture}
	\caption{$G_3$}
	\label{fig:G3}
\end{subfigure}

\begin{subfigure}[t]{0.17\textwidth}
	\centering
	\begin{tikzpicture}[scale=0.47]   
		\draw [line width=0.8pt, black] (0,0) to (-1,2);
		\draw [line width=0.8pt, black] (0,0) to (2,0);
		\draw [line width=0.8pt, black] (-1,2) to (1,4);
		\draw [line width=0.8pt, black] (1,4) to (3,2);
		\draw [line width=0.8pt, black] (3,2) to (2,0);
		\draw [line width=0.8pt, black] (1,4) to (0.7,2);
		\draw [line width=0.8pt, black] (0.5,2) to (3,2);
		\draw [line width=0.8pt, black] (0,0) to (0.7,2);
		\draw [line width=0.8pt, black] (0,0) to (1.2,1);
		\draw [line width=0.8pt, black] (1,4) to (1.2,1);
		\draw [line width=0.8pt, black] (2,0) to (1.2,1);
		
		\draw [fill=black,line width=0.8pt] (0,0) node[left=0.5mm] {$x_4$} circle (3pt); 
		\draw [fill=black,line width=0.8pt] (2,0) node[right=0.5mm] {$x_3$} circle (4pt); 
		\draw [fill=black,line width=0.8pt] (3,2) node[above=0.5mm] {$x_2$} circle (4pt); 
		\draw [fill=black,line width=0.8pt] (1,4) node[above] {$x_1$} circle (4pt); 
		\draw [fill=black,line width=0.8pt] (-1,2) node[above=0.5mm] {$x_5$} circle (4pt);
		\draw [fill=black,line width=0.8pt] (0.7,2) node[left=0.0000000000001mm] {$x_7$} circle (4pt);
		\draw [fill=black,line width=0.8pt] (1.2,1) node[right=0.00000000000000000000001mm] {$x_6$} circle (4pt);
	\end{tikzpicture}
	\caption{$G_4$}
	\label{fig:G4}
\end{subfigure}
\begin{subfigure}[t]{0.17\textwidth}
	\centering
	\begin{tikzpicture}[scale=0.47]   
		\draw [line width=0.8pt, black] (0,0) to (-1.5,2);
		\draw [line width=0.8pt, black] (0,0) to (2,0);
		\draw [line width=0.8pt, black] (-1.5,2) to (1,4);
		\draw [line width=0.8pt, black] (1,4) to (3.5,2);
		\draw [line width=0.8pt, black] (3.5,2) to (2,0);
		\draw [line width=0.8pt, black] (2,0) to (2,2);
		\draw [line width=0.8pt, black] (0,2) to (0,0);
		\draw [line width=0.8pt, black] (0,2) to (2,2);
		\draw [line width=0.8pt, black] (0,0) to (1,1);
		\draw [line width=0.8pt, black] (1,4) to (0,2);
		\draw [line width=0.8pt, black] (1,4) to (2,2);
		\draw [line width=0.8pt, black] (1,1) to (2,0);
		\draw [line width=0.8pt, black] (1,1) to (1,4);
		
		\draw [fill=black,line width=0.8pt] (0,0) node[left=0.5mm] {$x_4$} circle (3pt); 
		\draw [fill=black,line width=0.8pt] (2,0) node[right=0.5mm] {$x_3$} circle (4pt); 
		\draw [fill=black,line width=0.8pt] (3.5,2) node[above=0.5mm] {$x_8$} circle (4pt); 
		\draw [fill=black,line width=0.8pt] (1,4) node[above] {$x_1$} circle (4pt); 
		\draw [fill=black,line width=0.8pt] (-1.5,2) node[above=0.5mm] {$x_5$} circle (4pt);
		\draw [fill=black,line width=0.8pt] (2,2) circle (4pt);
            \node at (2.55,2) {$x_2$};
		\draw [fill=black,line width=0.8pt] (0,2) circle (4pt);
             \node at (-0.55,2) {$x_7$};
		\draw [fill=black,line width=0.8pt] (1,1) node[below=0.45mm] {$x_6$} circle (4pt);
	\end{tikzpicture}
	\caption{$G_5$}
	\label{fig:G5}
\end{subfigure}
\begin{subfigure}[t]{0.17\textwidth}
	\centering
	\begin{tikzpicture}[scale=0.47]   
		\draw [line width=0.8pt, black] (0,0) to (-1,2);
		\draw [line width=0.8pt, black] (0,0) to (2,0);
		\draw [line width=0.8pt, black] (-1,2) to (1,4);
		\draw [line width=0.8pt, black] (1,4) to (3,2);
		\draw [line width=0.8pt, black] (3,2) to (2,0);
		\draw [line width=0.8pt, black] (0,0) to (-0.5,1.8);
		\draw [line width=0.8pt, black] (-0.5,1.8) to (1,4);
		\draw [line width=0.8pt, black] (-0.5,1.8) to (1.5,3);
		\draw [line width=0.8pt, black] (-0.5,1.8) to (2,0);
		\draw [line width=0.8pt, black] (1.5,3) to (3,2);
		\draw [line width=0.8pt, black] (1.5,1) to (3,2);
		\draw [line width=0.8pt, black] (1.5,1) to (1.5,3);
		\draw [line width=0.8pt, black] (1.5,1) to (0,0);
		\draw [line width=0.8pt, black] (-1,2) to (1.5,3);
		
		\draw [fill=black,line width=0.8pt] (0,0) node[left=0.5mm] {$x_4$} circle (3pt); 
		\draw [fill=black,line width=0.8pt] (2,0) node[right=0.5mm] {$x_3$} circle (4pt); 
		\draw [fill=black,line width=0.8pt] (3,2) node[above=0.5mm] {$x_2$} circle (4pt); 
		\draw [fill=black,line width=0.8pt] (1,4) node[above] {$x_8$} circle (4pt); 
		\draw [fill=black,line width=0.8pt] (-1,2) node[above=0.5mm] {$x_5$} circle (4pt);
		\draw [fill=black,line width=0.8pt] (-0.5,1.8) node[right=0.6mm] {$x_6$} circle (4pt);
		\draw [fill=black,line width=0.8pt] (1.5,1) circle (4pt);
            \node at (1,1.2) {$x_7$};
		\draw [fill=black,line width=0.8pt] (1.5,3) circle (4pt);
            \node at (1.05,3.3) {$x_1$};
	\end{tikzpicture}
	\caption{$G_6$}
	\label{fig:G6}
\end{subfigure}

\begin{subfigure}[t]{0.17\textwidth}
	\centering
	\begin{tikzpicture}[scale=0.47]   
		\draw [line width=0.8pt, black] (0,0) to (-1,2);
		\draw [line width=0.8pt, black] (0,0) to (2,0);
		\draw [line width=0.8pt, black] (-1,2) to (1,4);
		\draw [line width=0.8pt, black] (1,4) to (3,2);
		\draw [line width=0.8pt, black] (3,2) to (2,0);
		\draw [line width=0.8pt, black] (0,0) to (0,2);
		\draw [line width=0.8pt, black] (0,2) to (1,4);
		\draw [line width=0.8pt, black] (0,2) to (3,2);
		\draw [line width=0.8pt, black] (0,2) to (0.5,1);
		\draw [line width=0.8pt, black] (1.5,1) to (0.5,1);
		\draw [line width=0.8pt, black] (1.5,1) to (2,0);
		\draw [line width=0.8pt, black] (1.5,1) to (1,4);
		\draw [line width=0.8pt, black] (1.5,1) to (0,0);
				
		\draw [fill=black,line width=0.8pt] (1,4) node[above]  {$x_4$} circle (4pt); 
		\draw [fill=black,line width=0.8pt] (2,0) node[right=0.5mm] {$x_3$} circle (4pt); 
		\draw [fill=black,line width=0.8pt] (3,2) node[above=0.5mm] {$x_6$} circle (4pt); 
		\draw [fill=black,line width=0.8pt] (0,0) node[left=0.5mm] {$x_2$} circle (4pt); 
		\draw [fill=black,line width=0.8pt] (-1,2) node[above=0.05mm] {$x_8$} circle (4pt);
		\draw [fill=black,line width=0.8pt] (0,2) circle (4pt);
           \node at (0.85,2.5) {$x_5$};
		\draw [fill=black,line width=0.8pt] (1.5,1) circle (4pt);
           \node at (2.1,1) {$x_1$};
		\draw [fill=black,line width=0.8pt] (0.5,1) circle (4pt);
           \node at (0.75,1.35) {$x_7$};
	\end{tikzpicture}
	\caption{$G_7$}
	\label{fig:G7}
\end{subfigure}
\begin{subfigure}[t]{0.17\textwidth}
	\centering
	\begin{tikzpicture}[scale=0.47]   
		\draw [line width=0.8pt, black] (0,0) to (-1,2);
		\draw [line width=0.8pt, black] (0,0) to (2,0);
		\draw [line width=0.8pt, black] (-1,2) to (0.5,4);
		\draw [line width=0.8pt, black] (3,2) to (1.5,4);
		\draw [line width=0.8pt, black] (3,2) to (2,0);
		\draw [line width=0.8pt, black] (0.5,4) to (0.5,1);
		\draw [line width=0.8pt, black] (1.5,4) to (1.5,1);
		\draw [line width=0.8pt, black] (0,0) to (0.5,1);
		\draw [bend left=50, line width=0.8pt, black] (1.5,1) to (0.5,1);
		\draw [line width=0.8pt, black] (3,2) to (0.5,1);
		\draw [line width=0.8pt, black] (1.5,1) to (2,0);
		\draw [line width=0.8pt, black] (1.5,1) to (1.5,4);
		\draw [line width=0.8pt, black] (1.5,1) to (-1,2);
		\draw [line width=0.8pt, black] (-1,2) to (1.5,4);
		\draw [line width=0.8pt, black] (0.5,4) to (3,2);
		
		\draw [fill=black,line width=0.8pt] (0.5,4) node[above=0.5mm] {$x_5$} circle (4pt); 
    	\draw [fill=black,line width=0.8pt] (1.5,4) node[above=0.5mm] {$x_6$} circle (4pt); 
		\draw [fill=black,line width=0.8pt] (2,0) node[right=0.5mm] {$x_2$} circle (4pt); 
		\draw [fill=black,line width=0.8pt] (3,2) node[above=0.5mm] {$x_1$} circle (4pt); 
		\draw [fill=black,line width=0.8pt] (0,0) node[left=0.5mm] {$x_3$} circle (4pt); 
		\draw [fill=black,line width=0.8pt] (-1,2) node[above=0.8mm] {$x_4$} circle (4pt);
		\draw [fill=black,line width=0.8pt] (0.5,1) circle (4pt);
        \node at (-0.11,1.1) {$x_7$};
		\draw [fill=black,line width=0.8pt] (1.5,1) circle (4pt);
            \node at (2.1,1.1) {$x_8$};
		
	\end{tikzpicture}
	\caption{$G_8$}
	\label{fig:G8}
\end{subfigure}
\begin{subfigure}[t]{0.17\textwidth}
	\centering
	\begin{tikzpicture}[scale=0.47]   
		\draw [line width=0.8pt, black] (0,0) to (-1,2);
		\draw [line width=0.8pt, black] (0,0) to (2,0);
		\draw [line width=0.8pt, black] (-1,2) to (0.5,4);
		\draw [line width=0.8pt, black] (3,2) to (1.5,4);
		\draw [line width=0.8pt, black] (3,2) to (2,0);
		\draw [line width=0.8pt, black] (0.5,4) to (0.5,1);
		\draw [line width=0.8pt, black] (1.5,4) to (1.5,1);
		\draw [line width=0.8pt, black] (0,0) to (0.5,1);
		\draw [bend left=50, line width=0.8pt, black] (1.5,1) to (0.5,1);
		\draw [line width=0.8pt, black] (3,2) to (0.5,1);
		\draw [line width=0.8pt, black] (1.5,1) to (2,0);
		\draw [line width=0.8pt, black] (1.5,1) to (1.5,4);
		\draw [line width=0.8pt, black] (1.5,1) to (-1,2);
		\draw [line width=0.8pt, black] (-1,2) to (1.5,4);
		\draw [line width=0.8pt, black] (0.5,4) to (3,2);
		\draw [line width=0.8pt, black] (-1,2) to (1,3);
		\draw [line width=0.8pt, black] (1,3) to (3,2);
		
		\draw [fill=black,line width=0.8pt] (0.5,4) node[above=0.5mm] {$x_5$} circle (4pt); 
		\draw [fill=black,line width=0.8pt] (1.5,4) node[above=0.5mm] {$x_6$} circle (4pt); 
		\draw [fill=black,line width=0.8pt] (2,0) node[right=0.5mm] {$x_2$} circle (4pt); 
		\draw [fill=black,line width=0.8pt] (3,2) node[above=0.5mm] {$x_1$} circle (4pt); 
		\draw [fill=black,line width=0.8pt] (0,0) node[left=0.5mm] {$x_3$} circle (4pt); 
		\draw [fill=black,line width=0.8pt] (-1,2) node[above=0.8mm] {$x_4$} circle (4pt);
		\draw [fill=black,line width=0.8pt] (0.5,1) circle (4pt);
            \node at (-0.1,1.1) {$x_7$};
		\draw [fill=black,line width=0.8pt] (1.5,1)  circle (4pt);
            \node at (2.1,1.1) {$x_8$};
		\draw [fill=black,line width=0.8pt] (1,3) node[below=0.025mm] {$x_9$} circle (4pt);
		
	\end{tikzpicture}
	\caption{$G_9$}
	\label{fig:G9}
\end{subfigure}
\begin{subfigure}[t]{0.17\textwidth}
	\centering
	\begin{tikzpicture}[scale=0.47]   
		\draw [line width=0.8pt, black] (0,0) to (-1,2);
		\draw [line width=0.8pt, black] (0,0) to (2,0);
		\draw [line width=0.8pt, black] (-1,2) to (0.5,4);
		\draw [line width=0.8pt, black] (3,2) to (1.5,4);
		\draw [line width=0.8pt, black] (3,2) to (2,0);
		\draw [line width=0.8pt, black] (0.5,4) to (0.5,1.5);
		\draw [line width=0.8pt, black] (1.5,4) to (1.5,1.5);
		\draw [line width=0.8pt, black] (0,0) to (0.5,1.5);
		\draw [line width=0.8pt, black] (1.5,1.5) to (0.5,1.5);
		\draw [bend right=50, line width=0.8pt, black] (3,2) to (0.5,1.5);
		\draw [line width=0.8pt, black] (1.5,1.5) to (2,0);
		\draw [line width=0.8pt, black] (1.5,1.5) to (1.5,4);
		\draw [bend right=50, line width=0.8pt, black] (1.5,1.5) to (-1,2);
		\draw [line width=0.8pt, black] (-1,2) to (1.5,4);
		\draw [line width=0.8pt, black] (0.5,4) to (3,2);
		\draw [line width=0.8pt, black] (0.5,4) to (1,1);
		\draw [line width=0.8pt, black] (1,1) to (1.5,4);
		\draw [line width=0.8pt, black] (1,1) to (0,0);
		\draw [line width=0.8pt, black] (1,1) to (2,0);
		
		\draw [fill=black,line width=0.8pt] (0.5,4) node[above=0.5mm] {$x_5$} circle (4pt); 
		\draw [fill=black,line width=0.8pt] (1.5,4) node[above=0.5mm] {$x_6$} circle (4pt); 
		\draw [fill=black,line width=0.8pt] (2,0) node[right=0.5mm] {$x_2$} circle (4pt); 
		\draw [fill=black,line width=0.8pt] (3,2) node[above=0.5mm] {$x_1$} circle (4pt); 
		\draw [fill=black,line width=0.8pt] (0,0) node[left=0.5mm] {$x_3$} circle (4pt); 
		\draw [fill=black,line width=0.8pt] (-1,2) node[above=0.8mm] {$x_4$} circle (4pt);
		\draw [fill=black,line width=0.8pt] (0.5,1.5) node[left=0.025mm] {$x_7$} circle (4pt);
		\draw [fill=black,line width=0.8pt] (1.5,1.5) node[right=0.025mm] {$x_8$} circle (4pt);
		\draw [fill=black,line width=0.8pt] (1,1)  circle (4pt);
            \node at (1,0.4) {$x_9$};
		
	\end{tikzpicture}
	\caption{$G_{10}$}
	\label{fig:G10}
\end{subfigure}

\caption{\small Basic graphs.}
\label{basicgraphs}
\end{figure}

Based on Lemma \ref{Pgem}, we present some structural properties of $3$-colorable ($P_5$, gem)-free graphs with an induced $C_5$ below. Let $\mathcal{G}_1$ be the family of graphs, consisting of $G_1$ and all graphs obtained from $G_1$ by expanding one vertex to a disjoint union of $K_2$, 
$\mathcal{G}_4$ be the family of graphs obtained from $G_4$ by expanding $x_5$ to a disjoint union of $K_2$, and  $\mathcal{G}_{10}=\{G_{10}\}$.

\begin{lem}\label{3colorable}
If $G$ be a $3$-colorable ($P_5$, gem)-free graph with an induced $C_5$, then $G$ has a false twin or $G\in \mathcal{G}_1\cup \mathcal{G}_4\cup \mathcal{G}_{10}$.
\end{lem}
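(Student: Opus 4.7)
My plan is to apply Lemma~\ref{Pgem}, which provides a structural dichotomy: either $G \in \mathcal{H}$, or $G$ is a $P_4$-free expansion of one of $G_1, G_2, \ldots, G_{10}$. I would handle these two cases separately.

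For $G \in \mathcal{H}$, I would exploit the partition $V(G) = A_1 \cup \cdots \cup A_7$ with the adjacency rules stated in Lemma~\ref{Pgem}. Picking one vertex from each of $A_1, A_2, A_3, A_4, A_5$ produces an induced $C_5$, so Observation~\ref{K3C5} together with the $3$-colorability of $G$ heavily restricts how these $A_i$'s can be ``expanded''. The key point is that whenever some $A_i$ contains a non-adjacent pair (which is automatic if $|A_i|\ge 2$ and $A_i$ is not a clique, using the $P_4$-freeness of $A_i$), the external homogeneity of $A_i$ in $G$ makes that pair false twins of $G$. A short additional argument, using the condition that each component of $G[A_7]$ is a homogeneous set and that every vertex of $A_7$ has a neighbor in $A_6$, handles the corner cases where every $A_i$ is either a clique or a singleton.

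For $G$ a $P_4$-free expansion of $G_i$, I would proceed by case analysis on $i$. For $i \in \{2,3,5,6,7,8,9\}$, I locate in $G_i$ an explicit non-adjacent pair $(x_a, x_b)$ satisfying $N_{G_i}(x_a) \subseteq N_{G_i}(x_b)$, and show this promotes to a false twin in $G$: when $|Q_{x_a}|=1$ the pair itself survives as a false twin, and when $|Q_{x_a}|\ge 2$ the $P_4$-freeness of $Q_{x_a}$ (which is externally homogeneous) forces an internal non-adjacent pair, giving a false twin of $G$. For $i \in \{1,4,10\}$, I use Observation~\ref{K3C5} to forbid expanding two vertices of the induced $C_5$ of $G_i$ to sets each containing a $K_2$, and I use the fact that any non-singleton independent-set expansion of any $Q_v$ immediately produces false twins; this pins the expansion down to lie exactly in $\mathcal{G}_1$, $\mathcal{G}_4$, or $\mathcal{G}_{10}$ respectively.

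The main obstacle will be the bookkeeping for $G \in \mathcal{H}$ and for the larger graphs $G_5,\ldots,G_9$: I must select the ``canonical'' false-twin pair in each case and verify that no intricate $P_4$-free expansion can inadvertently destroy it, while the joint constraints of $3$-colorability and the $C_5$-expansion restriction from Observation~\ref{K3C5} rule out all remaining possibilities outside $\mathcal{G}_1 \cup \mathcal{G}_4 \cup \mathcal{G}_{10}$.
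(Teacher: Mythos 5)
Your high-level skeleton (invoke Lemma~\ref{Pgem}, then treat $G\in\mathcal{H}$ and the $P_4$-free expansions of $G_1,\dots,G_{10}$ separately) is the same as the paper's, but two of your key mechanisms are false, and they fail exactly at the configurations that make the lemma nontrivial. A non-adjacent pair inside an externally homogeneous set is \emph{not} automatically a false twin: if $Q_{x_a}$ induces a single $K_2$ there is no non-adjacent pair at all, and if it induces a disjoint union of two or more copies of $K_2$, two vertices from different copies are non-adjacent but each sees its own partner, so neither neighborhood contains the other. The correct statement is the paper's Claim~\ref{Nx} together with Claim~\ref{disjointK2}: the only expansions of a vertex avoiding an internal false twin are $K_1$ and disjoint unions of $K_2$'s. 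Consequently your treatment of $G_2,G_3,G_5,G_6,G_8,G_9$ via a single pair $(x_a,x_b)$ with $N_{G_i}(x_a)\subseteq N_{G_i}(x_b)$ collapses: such a pair only forces $Q_{x_a}$ to contain an edge, and expanding $x_a$ to a $K_2$ kills that false twin without creating a new one, so no contradiction follows. The paper needs \emph{two} such pairs $(x,y)$ and $(u,v)$ with $x$ and $u$ lying on a common induced $C_5$, so that both $Q_x$ and $Q_u$ are forced to contain a $K_2$ and Observation~\ref{K3C5} then contradicts $3$-colorability. Moreover $G_7$ cannot be handled this way at all (its two forced expansions are at $x_7$ and $x_8$); the paper disposes of it with a separate explicit $3$-coloring contradiction, a step missing from your plan.

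The same flaw infects your $\mathcal{H}$ case: sets inducing disjoint unions of $K_2$'s fall outside your dichotomy ``non-adjacent pair $\Rightarrow$ false twin, else all cliques/singletons,'' and the all-cliques/singletons situation you call a corner case is in fact generic and is not resolved by the properties of $A_7$ you propose to use. The paper's argument here is different and much shorter: by Observation~\ref{K3C5} at least one of $A_2,A_5$ is an independent set, and then any vertex $x$ of that set paired with any $y\in A_6$ is a false twin, since $N(x)\subseteq A_1\cup A_3$ (resp.\ $A_1\cup A_4$) while $A_6$ is complete to $A_1\cup A_3\cup A_4$ and anticomplete to $A_2\cup A_5$. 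Your handling of $G_1$, $G_4$ and $G_{10}$ is essentially correct, but as written the proposal does not yield a proof for the remaining cases.
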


\begin{proof}
By Lemma~\ref{Pgem}, we have that $G\in \mathcal{H}$ or $G$ is a $P_4$-free expansion of either $G_1$, $G_2$,$\ldots$, $G_9$ or $G_{10}$. Suppose $G\in \mathcal{H}$.  Let $x_i\in A_i$ for $i\in \{1,2,\ldots,5\}$.
Note that $G[\{x_1,x_2,\ldots,x_5\}]$ is an induced $C_5$. Since $G$ is $3$-colorable,  at least one of $A_2$ and $A_5$ is an independent set by Observation~\ref{K3C5}. Therefore $G$ contains a false twin $(x, y)$ with $x\in A_2\cup A_5$ and $y\in A_6$. 

Now we assume that $G$ is a $P_4$-free expansion of $G_1, G_2,\ldots, G_9$ or $G_{10}$. In this case, if there exists a false twin, then we are done. So we suppose that there is no false twin in $G$. For any  $i\in \{1,2,\ldots,10\}$, since $G$ is $3$-colorable and gem-free,  any vertex of $G_i$ can only be expanded to a $2$-colorable $P_4$-free graph. 
We have the following.
\begin{claim}\label{Nx}
Any connected 2-colorable $P_4$-free graph except $K_1$ and $K_2$ contains a false twin.
\end{claim}
\begin{proof} Let $H$ be a connected $2$-colorable $P_4$-free graph and $H\notin\{K_1, K_2\}$. Then we can find an induced path $P$ of length $2$ in $H$. Let $P=xzy$. Suppose that $x$ has a neighbor $w$ other than $z$, then $w \notin N(z)$ because  $H$ is  $2$-colorable. So $w$ is adjacent to $y$, otherwise $H[\{w,x,z,y\}]$ is an induced  $P_4$ in $H$, a contradiction. So $N(x) \subseteq N(y)$ and $(x, y)$ is a false twin. 
\end{proof}

\begin{claim}\label{disjointK2}
For any  $i\in \{1,2,\ldots,10\}$, a vertex of $G_i$ can only be expanded to $K_1$ or a disjoint union of $K_2$.
\end{claim}
\begin{proof}
 Suppose that a vertex of $G_i$ is expanded to $H$, where $i\in\{1,2,\ldots,10\}$. By the definition of  expansion, we know that if $x,y\in V(H)$ and $N_{H}(x) \subseteq N_{H}(y)$, then $N_{G_i}(x) \subseteq N_{G_i}(y)$. So  we have that each component of $H$ is $K_1$ or $K_2$ by Claim \ref{Nx}. If $H$ contains a $K_1$, then  $H\cong K_1$, otherwise the neighborhood of the vertex in $K_1$ is included in the neighborhood of any other vertex in $H$.
\end{proof}

Since $G$ contains no false twin, for any $i\in\{1,2,\ldots,10\}$, if there exists a false twin $(x, y)$ in $G_i$, then $x$ has to be expanded to a disjoint union of $K_2$. Thus, for some $i\in\{1,2,\ldots,10\}$,  if  $G_i$ contains two false twins $(x, y)$ and $(u, v)$ such that $x$ and $u$
are on an induced $C_5$ (we use $[G_i:(x, y),(u, v)]$ to denote the special false twin pair), then by Observation~\ref{K3C5} and Claim~\ref{disjointK2}, $G$ is not a $P_4$-free expansion of $G_i$. Therefore $G$ is not a $P_4$-free expansion of $G_2, G_3, G_5, G_6, G_8, G_9$, as  these special false twin pairs are $[G_2:(x_5,x_6),(x_2,x_6)], [G_3:(x_5,x_6),(x_7,x_3)], [G_5:(x_5,x_7),(x_8,x_2)], [G_6:(x_5,x_6),(x_8,x_1)], [G_8:(x_2,x_7),(x_3,x_8)], [G_9:(x_2,x_7),(x_3,x_8)]$, respectively.
In $G_7$,  since $(x_8,x_5)$ and $(x_7,x_2)$ are two false twins, $x_8$ and $x_7$ must be expanded to a disjoint union of $K_2$. Note that $G_7[\{x_1,x_2,x_3\}]$ and $G_7[\{x_4,x_5,x_6\}]$ are two triangles. Since $G$ contains no $K_4$, any vertex in $\{x_1,x_2, \ldots, x_6\}$ cannot be expanded to a disjoint union of $K_2$ by Claim~\ref{disjointK2}. If  $G$ is the graph  obtained from $G_7$ by expanding each of $x_8$ and $x_7$ to a disjoint union of $K_2$, then  for any $3$-coloring $c$ of $G$, we have $c(x_2)= c(x_4)$ and $c(x_1)= c(x_5)$. Thus, we have $c(x_3)= c(x_6)$, a contradiction. Hence, $G$ is not a $P_4$-free expansion of $G_7$. 

Now we consider $G_1$, $G_4$ and $G_{10}$. Since $G_1$ is an induced $C_5$,  Observation~\ref{K3C5} and Claim \ref{disjointK2} imply that at most one vertex of $G_1$ can be expanded to a disjoint union of $K_2$. So $G\in {\mathcal{G}}_1$. In $G_4$, since  $(x_5,x_7)$ is a false twin, $x_5$ must be expanded to a disjoint union of $K_2$. Note that any  vertex of $G_4$ other than $x_5$  is in a triangle. Since $G$ contains no $K_4$, we have $G\in \mathcal{G}_4$ by Claim~\ref{disjointK2}. Finally for $G_{10}$, since any  vertex  of $G_{10}$ is in a triangle, $G\in \mathcal{G}_{10}$. 

This completes the proof of Lemma \ref{3colorable}.
\end{proof}



\subsection{Proof of Theorem \ref{mainthm1.1}}

\noindent By way of contradiction, suppose that there exists $k$ with $k\geq4$ such that $G$ is a $3$-colorable $P_5$-free graph that is not $k$-mixng. Among all $3$-colorable $P_5$-free graphs that is not $k$-mixng, we choose $G$ so that
$|V(G)|$ is minimum.

\begin{claim}\label{dG}
	 $\delta(G)\geq 3$.
\end{claim}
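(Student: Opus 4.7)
The plan is a standard minimality plus recoloring-lift argument. I would suppose for contradiction that there exists $v\in V(G)$ with $d_G(v)\le 2$. Since $G-v$ is an induced subgraph of $G$, it is still $P_5$-free and $3$-colorable, and $|V(G-v)|<|V(G)|$, so the minimality of $G$ forces $G-v$ to be $k$-mixing; that is, $\mathcal{R}_k(G-v)$ is connected. The goal is then to show that $G$ itself must be $k$-mixing, which contradicts the choice of $G$ and thus forces $\delta(G)\ge 3$.

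To show $G$ is $k$-mixing I would take any two $k$-colorings $\phi_1,\phi_2$ of $G$ and consider the restrictions $\phi_1|_{G-v},\phi_2|_{G-v}$, which are $k$-colorings of $G-v$. By the connectivity of $\mathcal{R}_k(G-v)$ there is a path $\psi_0,\psi_1,\dots,\psi_m$ between them, where $\psi_i$ and $\psi_{i+1}$ differ on a single vertex $u_i$ with $\psi_i(u_i)=a_i$ and $\psi_{i+1}(u_i)=b_i$. I would then process these steps in order, building a walk in $\mathcal{R}_k(G)$ starting from $\phi_1$, while maintaining the invariant that after stage $i$ the current coloring $\phi$ of $G$ satisfies $\phi|_{G-v}=\psi_i$.

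For step $i$, if $u_i\notin N_G(v)$, or if $u_i\in N_G(v)$ but $\phi(v)\ne b_i$, then the single recoloring $u_i\colon a_i\mapsto b_i$ is automatically proper in $G$ and I apply it directly. The only nontrivial case is $u_i\in N_G(v)$ with $\phi(v)=b_i$: then I first recolor $v$ to some color $c'_v\notin \phi(N_G(v))\cup\{b_i\}$, which exists because this forbidden set has size at most $|N_G(v)|+1\le 3<k$, and afterwards I recolor $u_i$ to $b_i$. After the $m$-th step the current coloring agrees with $\phi_2$ on $G-v$, and a final recoloring of $v$ to $\phi_2(v)$ (if needed) is proper because $\phi_2$ is a proper $k$-coloring and the current $\phi$ agrees with $\phi_2$ on $N_G(v)$. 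This produces a path from $\phi_1$ to $\phi_2$ in $\mathcal{R}_k(G)$, making $G$ $k$-mixing and contradicting the choice of $G$. The only point requiring care is that the rescue color for $v$ always exists, which is exactly the place where the hypothesis $k\ge 4$ is used; I do not anticipate any real obstacle beyond this bookkeeping.
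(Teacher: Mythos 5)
Your proposal is correct and is essentially the paper's own argument: delete the low-degree vertex, invoke minimality of $G$ (which is why the theorem is stated for $3$-colorable rather than $3$-chromatic graphs) to get a recoloring sequence on $G-v$, and lift it by inserting a recoloring of $v$ whenever a neighbor is about to receive $v$'s current color, using $k\ge 4 > d(v)+1$ to find a safe rescue color. No gap.
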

\begin{proof}
Suppose that $G$ contains a vertex $x$ with $d(x)\leq 2$. Let  $\alpha$ and $\beta$ be two
$k$-colorings of $G$.    Let  $\alpha'$ and $\beta'$ be the restrictions of $\alpha$ and $\beta$ to $G'=G-\{x\}$. By
the minimality of $G$, there exists a sequence $\mathcal{S}'$ of recolorings that transforms $\alpha'$ into $\beta'$. We extend $\mathcal{S}'$ to a sequence $\mathcal{S}$ of
recolorings in $G$. To form $\mathcal{S}$ in $G$, we can perform each recoloring step from $\mathcal{S}'$, except
when a neighbor $y$ of $x$ is to be recolored with the current color  of $x$. In that case, we
need to recolor $x$ before recoloring its neighbor $y$. The number of colors unused on $N[x]$ is at least $k-(d(x)+1)\geq 1$. We recolor $x$ with one of these colors that is not
the target color in the next recoloring of a neighbor of $x$.
Finally, if need, we
recolor $x$ to $\beta(x)$. Thus, we get that $G$ is $k$-mixing, a contradiction.
\end{proof}

\begin{claim}\label{nft} 
	 $G$ has no false twin.
\end{claim}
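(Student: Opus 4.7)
The plan is to argue by contradiction, exploiting the minimality of $G$. Suppose $(x,y)$ is a false twin in $G$, so $x\not\sim y$ and $N(x)\subseteq N(y)$. I would look at $G'=G-x$, which is again a $3$-colorable $P_5$-free graph with strictly fewer vertices and hence $k$-mixing by the choice of $G$. Given two arbitrary $k$-colorings $\alpha$ and $\beta$ of $G$, let $\alpha',\beta'$ be their restrictions to $V(G')$ and pick a recoloring sequence $\mathcal{S}'$ in $\mathcal{R}_k(G')$ from $\alpha'$ to $\beta'$. The goal is to splice $x$ back in so as to transform $\mathcal{S}'$ into a recoloring sequence in $\mathcal{R}_k(G)$ from $\alpha$ to $\beta$, which would contradict the assumption that $G$ is not $k$-mixing.

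The key device is to maintain the invariant that, throughout the lifted sequence, $x$ carries the same color as $y$; this coupling is harmless because $x\not\sim y$. First I would recolor $x$ from $\alpha(x)$ to $\alpha(y)$, which is legal since $\alpha(y)$ is unused on $N(y)\supseteq N(x)$. Then I would walk through $\mathcal{S}'$ one step at a time. A step of $\mathcal{S}'$ that recolors some $v\notin\{x,y\}$ to a color $c$ can be copied verbatim: if $v\sim x$, then $v\in N(y)$, so validity of the $G'$-step forces $c$ to differ from the current color of $y$, hence from that of $x$. A step that recolors $y$ to a new color $c$ is split into two recolorings in $G$, first recoloring $x$ to $c$ (legal because $c$ is unused on $N(y)\supseteq N(x)$ at that moment) and then recoloring $y$ to $c$, so the invariant is preserved.

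After exhausting $\mathcal{S}'$ the coloring on $V(G')$ agrees with $\beta'$ while $x$ carries color $\beta(y)$; a single final recoloring of $x$ to $\beta(x)$ completes the transformation and is valid because $\beta$ is proper and $N(x)\subseteq V(G')$. The main obstacle is checking that every lifted step is a legitimate single-vertex recoloring in $G$, and the sole delicate case is a step of $\mathcal{S}'$ acting on $y$: the old color of $y$ coincides with the current color of $x$, so a naive copy would violate properness at $x$. The coupling invariant, together with the pre-inserted recoloring of $x$ immediately before each $y$-step, is precisely what sidesteps this obstacle; all other steps go through automatically.
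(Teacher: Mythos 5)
Your argument is correct and follows essentially the same route as the paper: delete $x$, use minimality to get a recoloring sequence in $G-x$, and lift it by keeping $x$ synchronized with the color of $y$ (the paper recolors $x$ just \emph{after} each recoloring of $y$ rather than just before, but this is immaterial since $x\not\sim y$ and $N(x)\subseteq N(y)$). No gaps.
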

\begin{proof}
Suppose that $G$ contains a flase twin $(x,y)$.  Let  $\alpha$ and $\beta$ be any two
$k$-colorings of $G$ where $k\geq 4$.  Let  $\alpha'$ and $\beta'$ be the restrictions of $\alpha$ and $\beta$ to $G'=G-\{x\}$. By
the minimality of $G$, there exists a sequence $\mathcal{S}'$ of recolorings that transforms $\alpha'$ into $\beta'$. To extend $\mathcal{S}'$ to a sequence $\mathcal{S}$ of
recolorings in $G$, we first recolor $x$ to $\alpha(y)$, this is possible as $x\not\sim y$.  Then we  perform each recoloring step from $\mathcal{S}'$   except when $y$ is recolored. Every time, after recoloring $y$, we
need to recolor $x$ to  the current color of $y$. Finally, if need, we
recolor $x$ to $\beta(x)$. Since $N(x)\subseteq N(y)$, $\mathcal{S}$ is a sequence of recolorings that transforms $\alpha$ into $\beta$.
Thus, we get that $G$ is $k$-mixing, a contradiction.
\end{proof}

\begin{claim}\label{ngem}
	$G$ has no induced gem.
\end{claim}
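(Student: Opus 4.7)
The plan is to derive a contradiction from the assumption that $G$ contains an induced gem on vertex set $\{v,u_1,u_2,u_3,u_4\}$, where $v$ is the apex (adjacent to each $u_i$) and $u_1u_2u_3u_4$ is the induced $P_4$. The target contradiction will be either a false twin in $G$ (ruled out by Claim~\ref{nft}), a vertex of degree at most $2$ (ruled out by Claim~\ref{dG}), or the existence of a strictly smaller non-$k$-mixing $3$-colorable $P_5$-free graph (contradicting the minimality of $|V(G)|$).

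I would begin by using $P_5$-freeness to restrict, for every $w\in V(G)\setminus\{v,u_1,u_2,u_3,u_4\}$, the pattern $N(w)\cap\{v,u_1,u_2,u_3,u_4\}$. For instance, the pattern $\{u_1\}$ is forbidden, since otherwise $w u_1 u_2 u_3 u_4$ is an induced $P_5$; the pattern $\{u_4\}$ is forbidden symmetrically; further patterns such as $\{u_1,u_2\}$ are constrained by considering paths through $v$ like $u_4 u_3 v u_1 w$ and its analogues. This yields a short catalogue of allowed attachment types.

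With this catalogue in hand, I would aim to exhibit a false twin. The natural candidates are the two nonadjacent pairs inside the gem, namely $(u_1,u_3)$ and $(u_2,u_4)$. I would compare the external neighborhoods $N(u_1)\setminus\{v,u_2\}$ and $N(u_3)\setminus\{v,u_2,u_4\}$: any witness vertex in the symmetric difference should either complete an induced $P_5$ (contradiction), create a further gem on which the same analysis applies, or itself be a false twin partner of $u_2$ or $u_4$. If this comparison fails for both diagonals, I would fall back on deleting the apex $v$: the graph $G-v$ is still $3$-colorable and $P_5$-free, hence $k$-mixing by the minimality of $|V(G)|$, and I would try to extend any recoloring sequence of $G-v$ to $G$ by keeping a spare color for $v$, using the adjacency restrictions together with the $3$-colorability of $G$ to show that no recoloring step exhausts the palette on $N(v)$.

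The main obstacle I anticipate is the case analysis for how non-gem vertices attach: even after $P_5$-freeness prunes many patterns, several are individually consistent, and each demands a separate reduction. In particular, I expect the $3$-colorability of $G$ to be essential for ruling out configurations in which $v$ would acquire a $K_3$ in its neighborhood forcing a $K_4$, which both constrains the attachment patterns and ensures the extension step for $v$ never fails. The delicate step will be the diagonal comparison $(u_1,u_3)$ versus $(u_2,u_4)$, where a witness vertex might seem to escape producing a $P_5$ yet still needs to be forced into a false-twin configuration.
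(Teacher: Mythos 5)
Your opening move matches the paper's: assume the gem, use $P_5$-freeness to pin down how outside vertices attach, and use Claim~\ref{nft} to extract witnesses for the nonadjacent pairs $(u_1,u_3)$ and $(u_2,u_4)$. But the first genuine gap is that these witnesses do \emph{not} immediately yield an induced $P_5$ or a false twin: a vertex $x\in N(u_1)\setminus N(u_3)$ is locally consistent --- it is merely forced to be adjacent to exactly $u_1,u_2,u_4$ among the path vertices and to receive the apex's color --- and similarly for $y\in N(u_4)\setminus N(u_2)$. The paper has to iterate: from $x$ and $y$ it extracts $x_2^*\in N(x)\setminus N(u)$ and $y_1^*\in N(y)\setminus N(u)$ by applying Claim~\ref{nft} again, the resulting new gems produce further vertices $z,z_3^*$ and $w,w_3^*$, and only then does $w_3^*u_1uu_4z_3^*$ close up into an induced $P_5$. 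Your phrase ``create a further gem on which the same analysis applies'' gestures at this recursion but gives no reason it terminates; in the paper it terminates only because specific adjacencies ($z_3^*\sim u_4$ and $w_3^*\sim u_1$, forced by the (P3)-type analysis) let the two sides of the construction meet in a single induced $P_5$. Without identifying that closing configuration, the case analysis does not end.

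The second gap is the fallback of deleting the apex. To extend a recoloring sequence of $G-v$ to $G$ you need $v$ to have an available color after every intermediate step, and the only general guarantee is $d(v)\leq k-2$, which is exactly why Claim~\ref{dG} is restricted to vertices of degree at most $2$ when $k\geq 4$. The apex of a gem can have arbitrarily large degree, and although $3$-colorability makes $G[N(v)]$ bipartite, an intermediate $k$-coloring of $G-v$ can still place all $k$ colors on $N(v)$, at which point no spare color exists for $v$. Indeed, if this extension worked for gem apexes it would work for every vertex of a $3$-colorable $P_5$-free graph and would prove Theorem~\ref{mainthm1.1} outright, so it cannot be salvaged by $3$-colorability alone. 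Neither branch of your plan reaches a contradiction as written.
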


\begin{proof}
Suppose that $G$ has an induced gem, see Figure~\ref{fig:gemproperty}(a). Let $\alpha$ be a $3$-coloring of $G$ with $\alpha(u)=3,\alpha(u_1)=\alpha(u_3)=1,\alpha(u_2)=\alpha(u_4)=2$. 
By Claim~\ref{nft}, we have  $N(u_1)\setminus N(u_3)\neq \emptyset$ and $N(u_4)\setminus N(u_2)\neq \emptyset$. 
Here we have the following property (P1).
\\[8pt]
\text{{\bf (P1)} For any $x\in N(u_1)\setminus N(u_3)$, we have $\alpha(x)=3$ and $N(x)\cap \{u_1,u_2,u_3,u_4\}=\{u_1,u_2,u_4\}$.}

\begin{proof} If $\alpha(x)\neq3$ or $N(x)\cap \{u_1,u_2,u_3,u_4\}=\{u_1\}$, then $xu_1u_2u_3u_4$ is an induced $P_5$, contradicting the fact that $G$ is $P_5$-free. 
Suppose $N(x)\cap \{u_1,u_2,u_3,u_4\}=\{u_1,u_2\}$.
Let $y\in N(u_4)\setminus N(u_2)$. Note that $x\neq y$. Then $xu_2uu_4y$ is an induced $P_5$,  a contradiction. Suppose $N_G(x)\cap \{u_1,u_2,u_3,u_4\}=\{u_1,u_4\}$. 
By Claim~\ref{nft}, there exists a vertex $z$ such that $z\in N(x)\setminus N(u)$. Then $zxu_1uu_3$ is an induced $P_5$ when $\alpha(z)=1$ and $zxu_4uu_2$ is an induced $P_5$ when $\alpha(z)=2$,  a contradiction. Therefore, $\alpha(x)=3$ and $N(x)\cap \{u_1,u_2,u_3,u_4\}=\{u_1,u_2,u_4\}$.
\end{proof}

Let $x\in N(u_1)\setminus N(u_3)$ and $y\in N(u_4)\setminus N(u_2)$. By (P1) and the symmetry of $u_1$ and $u_4$, we have $x\neq y$, $\alpha(y)=3$ and $N(y)\cap \{u_1,u_2,u_3,u_4\}=\{u_1,u_3,u_4\}$. For $i\in \{1,2\}$, let $N^i(x)$ (resp. $N^i(y)$) be the set of neighbors of $x$ (resp. $y$) colored $i$ under $\alpha$ except $u_1,u_2, u_3,u_4$. Note that  for $i\in \{1,2\}$, $N^i(x)$ or $N^i(y)$ might be empty. We have the following property (P2).
\\[8pt]
{\bf (P2)} For any $x_1\in N^1(x)$, we have $x_1\sim u$ and $x_1\sim y$. For any $y_2\in N^2(y)$, we have $y_2\sim u$ and $y_2\sim x$.

\begin{proof}
By the symmetry of $x_1$ and $y_2$, it suffices to prove $x_1\sim u$ and $x_1\sim y$. If $x_1\not\sim u$, then $x_1xu_1uu_3$ is an induced $P_5$,  a contradiction. If $x_1\not\sim y$, then $xx_1uu_3y$ is an induced $P_5$, a contradiction.
Therefore, $x_1\sim u$ and $x_1\sim y$.
\end{proof}

Since $x\not\sim u$ and $y\not\sim u$,  Claim~\ref{nft} and (P2) imply that $N^2(x)\setminus N(u)\neq \emptyset$ and $N^1(y)\setminus N(u)\neq \emptyset$.
 We have the following property (P3).
\\[8pt]
{\bf (P3)}For any $x_2^*\in N^2(x)\setminus N(u)$ and $y_1^*\in N^1(y)\setminus N(u)$, we have $x_2^*\not\sim y$, ${y_1}^*\not\sim x$, 
$x_2^*\sim y_1^*$, $x_2^*\sim u_1$,
$x_2^*\sim u_3$, $y_1^*\sim u_2$, $y_1^*\sim u_4$.

\begin{proof} 
Since $x_2^*\not\in N(u)$ and $y_1^*\not\in N(u)$, we have  $x_2^*\not\sim y$ and  $y_1^*\not\sim x$, otherwise, $x_2^*yu_4uu_2$ and $y_1^*xu_1uu_3$ are two induced $P_5$, a contradiction.
If $x_2^*\not\sim y_1^*$, then $x_2^*xu_2{y_1^*}y$ is an induced $P_5$, a contradiction. If $x_2^*\not\sim u_3$, then $x_2^*xu_2u_3y$ is an induced $P_5$, a contradiction. If $x_2^*\not\sim u_1$, then $y_1^*{x_2}^*u_3uu_1$ is an induced $P_5$, a contradiction. Thus $x_2^*\sim y_1^*$, $x_2^*\sim u_1$
and $x_2^*\sim u_3$.
Similarly, we have $y_1^*\sim u_2$ and  $y_1^*\sim u_4$.
\end{proof}

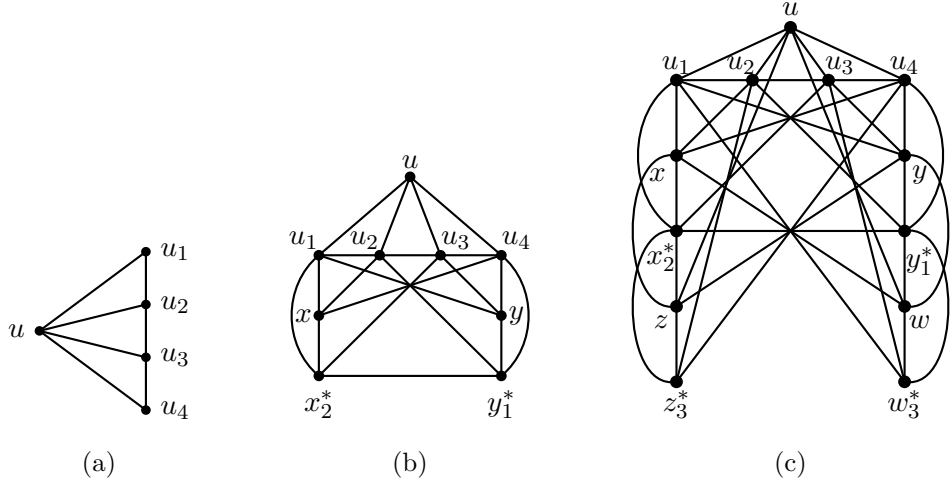
\begin{figure}[t]
	\centering
 \begin{subfigure}[t]{0.2\textwidth}
	\centering
	\begin{tikzpicture}[scale=0.7]   
		\draw [line width=0.8pt, black] (0,1.5) to (2,3);
		\draw [line width=0.8pt, black] (0,1.5) to (2,2);
		\draw [line width=0.8pt, black] (0,1.5) to (2,0);
		\draw [line width=0.8pt, black] (0,1.5) to (2,1);
		\draw [line width=0.8pt, black] (2,3) to (2,0);
	
		\draw [fill=black,line width=0.8pt] (0,1.5) node[left=0.5mm] {$u$} circle (2pt); 
		\draw [fill=black,line width=0.8pt] (2,3) node[right=0.5mm] {$u_1$} circle (2pt); 
		\draw [fill=black,line width=0.8pt] (2,2) node[right=0.5mm] {$u_2$} circle (2pt); 
		\draw [fill=black,line width=0.8pt] (2,1) node[right=0.5mm] {$u_3$} circle (2pt); 
		\draw [fill=black,line width=0.8pt] (2,0) node[right=0.5mm] {$u_4$} circle (2pt); 
	\end{tikzpicture}
	\caption{}
\end{subfigure}
   \begin{subfigure}[t]{0.32\textwidth}
   	\centering
   	\begin{tikzpicture}[scale=0.8]   
		\draw [line width=0.8pt, black] (1.5,3.3) to (1,2);
		\draw [line width=0.8pt, black] (1.5,3.3) to (2,2);
		\draw [line width=0.8pt, black] (1.5,3.3) to (3,2);
		\draw [line width=0.8pt, black] (1.5,3.3) to (0,2);
		\draw [line width=0.8pt, black] (0,2) to (3,2);
		\draw [line width=0.8pt, black] (0,1) to (0,2);
		\draw [line width=0.8pt, black] (0,1) to (1,2);
		\draw [line width=0.8pt, black] (0,1) to (3,2);
		\draw [line width=0.8pt, black] (3,1) to (0,2);
		\draw [line width=0.8pt, black] (3,1) to (2,2);
		\draw [line width=0.8pt, black] (3,1) to (3,2);
		\draw [line width=0.8pt, black] (3,1) to (3,0);
		\draw [line width=0.8pt, black] (0,1) to (0,0);
		\draw [line width=0.8pt, black] (0,0) to (2,2);
		\draw [line width=0.8pt, black] (0,0) to (3,0);
		\draw [line width=0.8pt, black] (3,0) to (1,2);
		\draw [bend right=50, line width=0.8pt, black] (3,0) to (3,2);
		\draw [bend left=50, line width=0.8pt, black] (0,0) to (0,2);
		
		\draw [fill=black,line width=0.8pt] (1.5,3.3)  circle (2pt); 
  \node at (1.5,3.55) {$u$};
		\draw [fill=black,line width=0.8pt] (0,2)  circle (2pt);
            \node at (-0.25,2.25) {$u_1$};
		\draw [fill=black,line width=0.8pt] (1,2)  circle (2pt); 
            \node at (0.75,2.25) {$u_2$};
		\draw [fill=black,line width=0.8pt] (2,2)  circle (2pt); 
             \node at (2.25,2.25) {$u_3$};
		\draw [fill=black,line width=0.8pt] (3,2)  circle (2pt);
            \node at (3.25,2.25) {$u_4$};
		\draw [fill=black,line width=0.8pt] (0,1)  circle (2pt);
        \node at (-0.25,1) {$x$};
		\draw [fill=black,line width=0.8pt] (3,1)  circle (2pt);
        \node at (3.25,1) {$y$};
		\draw [fill=black,line width=0.8pt] (0,0) node[below=0.5mm] {${x^*_2}$} circle (2pt);
		\draw [fill=black,line width=0.8pt] (3,0) node[below=0.5mm] {${y^*_1}$} circle (2pt); 
	\end{tikzpicture}
	\caption{}
    \end{subfigure}
    \begin{subfigure}[t]{0.32\textwidth}
	   \centering
	\begin{tikzpicture}[scale=1.0]   
		\draw [line width=0.8pt, black] (1.5,2.7) to (1,2);
		\draw [line width=0.8pt, black] (1.5,2.7) to (2,2);
		\draw [line width=0.8pt, black] (1.5,2.7) to (3,2);
		\draw [line width=0.8pt, black] (1.5,2.7) to (0,2);
		\draw [line width=0.8pt, black] (0,2) to (3,2);
		\draw [line width=0.8pt, black] (0,1) to (0,2);
		\draw [line width=0.8pt, black] (0,1) to (1,2);
		\draw [line width=0.8pt, black] (0,1) to (3,2);
		\draw [line width=0.8pt, black] (3,1) to (0,2);
		\draw [line width=0.8pt, black] (3,1) to (2,2);
		\draw [line width=0.8pt, black] (3,1) to (3,2);
		\draw [line width=0.8pt, black] (3,1) to (3,0);
		\draw [line width=0.8pt, black] (0,1) to (0,0);
		\draw [line width=0.8pt, black] (0,0) to (2,2);
		\draw [bend left=60, line width=0.8pt, black] (0,0) to (0,2);
		\draw [line width=0.8pt, black] (0,0) to (3,0);
		\draw [line width=0.8pt, black] (3,0) to (1,2);
		\draw [bend right=60, line width=0.8pt, black] (3,0) to (3,2);
       \draw [bend left=100, line width=0.8pt, black] (0,-1) to (0,1);
       \draw [line width=0.8pt, black] (0,-1) to (1.5,2.7);
       \draw [line width=0.8pt, black] (0,-1) to (3,1);
		\draw [line width=0.8pt, black] (0,0) to (0,-1);
		\draw [line width=0.8pt, black] (0,-1) to (0,-2);
		\draw [line width=0.8pt, black] (3,0) to (3,-1);
		\draw [line width=0.8pt, black] (3,-1) to (3,-2);
		\draw [line width=0.8pt, black] (0,-2) to (3,2);
        \draw [bend left=100, line width=0.8pt, black] (0,-2) to (0,0);
        \draw [line width=0.8pt, black] (0,-2) to (1,2);
		\draw [line width=0.8pt, black] (3,-2) to (0,2);
        \draw [bend right=100, line width=0.8pt, black] (3,-1) to (3,1);
        \draw [line width=0.8pt, black] (3,-1) to (1.5,2.7);
        \draw [line width=0.8pt, black] (3,-1) to (0,1);
        \draw [line width=0.8pt, black] (3,-2) to (2,2);
        \draw [bend right=100, line width=0.8pt, black] (3,-2) to (3,0);

		\draw [fill=black,line width=0.8pt] (1.5,2.7)
  circle (2pt); 
  \node at (1.5,2.95) {$u$}; 
		\draw [fill=black,line width=0.8pt] (0,2)  circle (2pt); 
  \node at (0,2.2) {$u_1$};
		\draw [fill=black,line width=0.8pt] (1,2)  circle (2pt); 
            \node at (0.85,2.2) {$u_2$};
		\draw [fill=black,line width=0.8pt] (2,2) circle (2pt); 
            \node at (2.15,2.2) {$u_3$};
		\draw [fill=black,line width=0.8pt] (3,2) circle (2pt);
  \node at (3,2.2) {$u_4$};
		\draw [fill=black,line width=0.8pt] (0,1)  circle (2pt);
        \node at (-0.25,0.75) {$x$};
		\draw [fill=black,line width=0.8pt] (3,1) circle (2pt);
        \node at (3.2,0.75) {$y$};
		\draw [fill=black,line width=0.8pt] (0,0)  circle (2pt);
  \node at (-0.2,-0.35) {${x^*_2}$};
		\draw [fill=black,line width=0.8pt] (3,0) circle (2pt); 
   \node at (3.2,-0.4) {${y^*_1}$};
		\draw [fill=black,line width=0.8pt] (0,-1)  circle (2pt);
  \node at (-0.2,-1.2) {$z$};
		\draw [fill=black,line width=0.8pt] (3,-1)  circle (2pt); 
   \node at (3.2,-1.2) {$w$};
		\draw [fill=black,line width=0.8pt] (0,-2)  circle (2pt); 
  \node at (0,-2.3) {$z_3^*$};
		\draw [fill=black,line width=0.8pt] (3,-2)  circle (2pt); 
        \node at (3,-2.3) {$w_3^*$};
	\end{tikzpicture}
	\caption{}
    \end{subfigure}
\caption{Structural properties of gems.}
	\label{fig:gemproperty}
\end{figure}

Let $x_2^*\in N^2(x)\setminus N(u)$ and $y_1^*\in N^1(y)\setminus N(u)$. Since $\alpha(x_2^*)=\alpha(u_2)$ and $\alpha(y_1^*)=\alpha(u_3)$, we have $x_2^*\not\sim u_2$ and $y_1^*\not\sim u_3$. The induced subgraph  $H=G[\{u,u_1,u_2,u_3,u_4,x,y,x_2^*,y_1^*\}]$ is shown in Figure~\ref{fig:gemproperty}(b). Note that $G[\{u_1,u,u_2,x,x_2^*\}]$ and $G[\{u_4,u,u_3,y,y_1^*\}]$ are two induced gems. We first consider $G[\{u_1,u,u_2,x,x_2^*\}]$. The following discussion is similar to the foregoing discussion for the structure of gem.  
Since $x_2^*\not\sim u_2$, by Claim~\ref{nft} there exists a vertex $z$ such that $z\in N(x_2^*)\setminus N({u_2})$. By (P1), we know that $z\sim x$, $z\sim u$ and $\alpha(z)=1$. By the adjacency relationship, we have $z\neq u_3, y_1^*$, which means that $z\not\in V(H)$.  Since $z\in N^1(x)$, we have $z\sim y$ by (P2). Since $\alpha(z)=\alpha(u_1)=1$, we have $z\not\sim u_1$. Then  Claim~\ref{nft} and (P2) imply  that there exists a vertex  $z_3^*\in N(z)\setminus N(u_1)$ and $\alpha(z_3^*)=3$.
Since $y\sim u_1$, we have $z_3^*\neq y$. Thus, $z_3^*\not\in V(H)$. Note that $u_3\in N(u)\setminus N(u_1)$,  $u_4\in N(u_3)\setminus N(u_1)$ and $u_4\not\sim u_1$. Then by (P3), we have $z^*_3\sim x^*_2$, $z^*_3\sim u_2$ and $z^*_3\sim u_4$. Now we consider the induced gem $G[\{u_4,u,u_3,y,y_1^*\}]$. By symmetry, 
 there exist two vertices $w, w_3^*\not\in V(H)$ such that $w\in N(y_1^*)\setminus N(u_4)$, $\alpha(w)=2$ and $w_3^*\in N(w)\setminus N(u_4)$, $\alpha(w_3^*)=3$, 
 $w_3^*\sim u_1$. 
  Then  $w_3^*u_1u u_4{z_3}^*$ is an induced $P_5$,   which is shown in Figure~\ref{fig:gemproperty}(c), a contradiction.
\end{proof}

\begin{claim}\label{nC5} 
	$G$ has no induced $C_5$.
\end{claim}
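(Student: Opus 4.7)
The plan is to combine the structural classification of Lemma~\ref{3colorable} with the already-established properties of $G$ to reduce the problem to a small, explicit family of graphs, each of which can then be eliminated.

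Suppose for contradiction that $G$ contains an induced $C_5$. Since $G$ is gem-free by Claim~\ref{ngem} and $3$-colorable, Lemma~\ref{3colorable} applies: either $G$ has a false twin, or $G \in \mathcal{G}_1 \cup \mathcal{G}_4 \cup \mathcal{G}_{10}$. Claim~\ref{nft} rules out the false-twin case, so $G$ lies in $\mathcal{G}_1 \cup \mathcal{G}_4 \cup \mathcal{G}_{10}$.

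The family $\mathcal{G}_1$ is dispensed with by degrees. Every graph in $\mathcal{G}_1$ arises from $C_5 = x_1 x_2 x_3 x_4 x_5 x_1$ by expanding a single vertex (say $x_1$) to a disjoint union of $K_2$'s; the two $C_5$-vertices at distance $2$ from $x_1$ keep their two $C_5$-neighbors and gain no new ones, so each has degree $2$, contradicting Claim~\ref{dG}.

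For $\mathcal{G}_4$ and $\mathcal{G}_{10}$, the plan is to show directly that every member is $k$-mixing for each $k \ge 4$, contradicting our choice of $G$. For $G \in \mathcal{G}_4$, the graph arises from $G_4$ by expanding $x_5$ to $r \ge 1$ disjoint copies $\{y_i, y_i'\}_{i=1}^r$ of $K_2$, each adjacent to $\{x_1, x_4\}$. Because all these pairs share the same external neighborhood, each pair $(y_i, y_i')$ has a rich set of admissible color pairs (any two distinct colors outside $\{c(x_1),c(x_4)\}$, of which there are at least $(k-2)(k-3) \ge 2$ for $k \ge 4$). I would first transform an arbitrary $k$-coloring to one in which every pair $(y_i, y_i')$ receives a designated pair of colors by a ``greedy swap'' along one pair at a time; this reduces $k$-mixing of $G$ to $k$-mixing of the $8$-vertex graph obtained from $G_4$ by expanding $x_5$ to a single $K_2$, which I would handle by a finite check, potentially by invoking the minimality of $G$ on its induced subgraphs in the style of Claims~\ref{dG} and~\ref{nft}. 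For $G = G_{10}$, a fixed $4$-regular graph on $9$ vertices, I would verify $k$-mixing by a direct analysis: Jerrum's bound already covers $k \ge 6$, and for $k \in \{4,5\}$ I would exploit the substantial vertex-transitive symmetry of $G_{10}$ to enumerate the $k$-colorings up to automorphism and color-permutation, and then show that every orbit is connected in $\mathcal{R}_k(G_{10})$ to a single canonical coloring.

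The principal obstacle is the case $G = G_{10}$ with $k = 4$, since every vertex has degree $4 = k$, so the ``free-color'' trick behind Claim~\ref{dG} does not apply, $G_{10}$ has no false twin (a direct neighborhood check), and Jerrum's degree bound $k \ge \Delta+2 = 6$ is out of reach. Making the finite verification efficient—by pinning down a canonical coloring and exhibiting explicit recoloring sequences from every equivalence class to it—will be the crux of the proof.
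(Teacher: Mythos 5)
Your opening reduction coincides with the paper's: invoke Lemma~\ref{3colorable} via Claims~\ref{ngem} and~\ref{nft} to get $G\in\mathcal{G}_1\cup\mathcal{G}_4\cup\mathcal{G}_{10}$, and eliminate $\mathcal{G}_1$ by the minimum-degree bound of Claim~\ref{dG}. But for $\mathcal{G}_4$ and $\mathcal{G}_{10}$ your plan diverges from the paper and, as written, has a genuine gap: you propose to prove outright that every member of these families is $k$-mixing for all $k\geq 4$, and you yourself flag the decisive case ($G_{10}$ with $k=4$, where Jerrum's bound fails, there is no low-degree vertex, and no false twin) as an unresolved ``crux.'' A plan that defers its hardest case to an unexecuted finite enumeration is not a proof. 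The $\mathcal{G}_4$ reduction is also underspecified: with $k=4$ each expanded pair $(y_i,y_i')$ has only two colors available outside $\{c(x_1),c(x_4)\}$, so swapping a pair from $(a,b)$ to $(b,a)$ is not a local move and your ``greedy swap'' needs an argument; moreover, knowing that the $8$-vertex induced subgraph is $k$-mixing (by minimality) does not by itself transfer $k$-mixing to $G$.

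The idea you are missing is the paper's Subclaim~\ref{A}, which lets one avoid proving $k$-mixing of these graphs from scratch. One shows that for a suitable nonadjacent pair ($x_1,x_6$ in $\mathcal{G}_4$; $x_2,x_6$ in $G_{10}$) the graph $G'$ obtained by identifying the pair is still $P_5$-free and $3$-colorable; since $|V(G')|<|V(G)|$, minimality makes $G'$ $k$-mixing, and its recolorings lift to recolorings of $G$ that keep the pair monochromatic. Hence if \emph{every} $k$-coloring of $G$ could be steered to one where the pair agrees, $G$ would be $k$-mixing --- so some coloring cannot be so steered. The paper then derives a contradiction by an explicit short case analysis showing that in fact every $k$-coloring ($k\geq4$) \emph{can} be transformed to make the pair agree. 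This identification device is exactly what disposes of the $G_{10}$, $k=4$ case that blocks your approach; without it (or a completed exhaustive verification in its place), your argument does not close.
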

\begin{proof}
Suppose that $G$ has an induced $C_5$. By Claim~\ref{ngem}, we know that $G$ is a $3$-colorable ($P_5$, gem)-free graph with an induced $C_5$.
Then by Lemma~\ref{3colorable}, we have $G\in \mathcal{G}_1\cup \mathcal{G}_4\cup \mathcal{G}_{10}$. Note that any graph in $\mathcal{G}_1\cup \mathcal{G}_4\cup \mathcal{G}_{10}$ has no false twin. Since any graph in ${\mathcal{G}}_1$ has minimum degree $2$,  Claim~\ref{dG} implies that $G\notin {\mathcal{G}}_1$. Now we consider $G\in  \mathcal{G}_4\cup \mathcal{G}_{10}$. We first give a subclaim as follows.

\begin{subclaim}\label{A}
Let $x,y$ be two nonadjacent vertices of $G$ and $G'$ be the graph obtained from $G$ by identifying $x$ and $y$  and deleting parallel edges. If $G'$ is a $k$-colorable $P_5$-free graph, then there exists a $k$-coloring $\alpha$ of $G$ such that $\alpha$ cannot be transformed to any $k$-coloring $\alpha'$ of $G$ with $\alpha'(x)=\alpha'(y)$. 
\end{subclaim}

\noindent{\it Proof.}
Let $\alpha$ and $\beta$ be any two $k$-colorings of $G$. Suppose that $\alpha$ and $\beta$ can be transformed to two $k$-colorings $\alpha'$ and $\beta'$ of $G$ such that $\alpha'(x)=\alpha'(y)$ and $\beta'(x)=\beta'(y)$, respectively.
Let $z$ denote the  new vertex in $G'$ after identifying $x$ and $y$ in $G$. Let $\alpha''$ and $\beta''$ be two $k$-colorings of $G'$ satisfying that $\alpha''(z)=\alpha'(x)$, $\beta''(z)=\beta'(x)$ and $\alpha''(w)=\alpha'(w)$, $\beta''(w)=\beta'(w)$ for any $w\in V(G')\setminus \{z\}$. 
Since $G'$ is $P_5$-free and $k$-colorable,   $\alpha''$ can be transformed into $\beta''$  by
the minimality of $G$. This implies that $\alpha'$ can be transformed into $\beta'$. Thus, $\alpha$ can be transformed into $\beta$ by $\alpha$ into $\alpha'$, $\alpha'$ into $\beta'$, $\beta'$ into $\beta$. Hence, by arbitrary of $\alpha$ and $\beta$, $G$ is $k$-mixing, a contradiction.\qed

The following discussion is split into two cases below.

{\bf Case 1.} $G\in  {\mathcal{G}}_4$. See Figure~\ref{g4}(a).

Let $G'$ be the graph obtained from $G$ by identifying $x_1$ and $x_6$ and deleting parallel edges as shown in Figure~\ref{g4}(b).  It is worth noting that $G'$ is $P_5$-free. A $3$-coloring of $G'$ is shown in Figure~\ref{g4}(c). So $G'$ is $P_5$-free and $3$-colorable.
Now we claim that any $k$-coloring $\alpha$ of $G$ can be transformed to a $k$-coloring $\alpha'$ of $G$ such that $\alpha'(x_1)=\alpha'(x_6)$.	 If $\alpha(x_1)=\alpha(x_6)$, then we are done. 
So we assume  $\alpha(x_1)\neq\alpha(x_6)$. Without loss of generality, we assume that $\alpha(x_1)=1$ and $\alpha(x_6)=2$. Let $U=V(G)\setminus\{x_1,x_2,x_3,x_6\}$. Then 
for any $x\in U$, since $x$ is the common neighbor of $x_1$ and $x_6$, we have $\alpha(x)\in\{3,4,\ldots,k\}$. If $\alpha(x_2)\neq 2$, then we can recolor $x_1$ by color $2$ and we are done, so we assume $\alpha(x_2)=2$. By symmetry, we assume $\alpha(x_3)=1$. Note that $k\geq 4$. Let $a\in \{3,4,\ldots,k\}\setminus \alpha(x_4)$. We recolor $x_3$, $x_6$ by colors $a$, $1$ in order, which yields a $k$-coloring $\alpha'$ of $G$ such that $\alpha'(x_1)=\alpha'(x_6)$, contradicting Subclaim~\ref{A}.

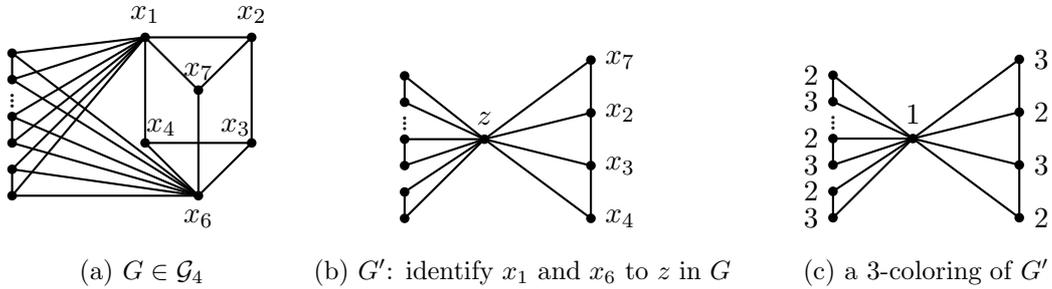
\begin{figure}[htbp]
	\centering
   \begin{subfigure}[t]{0.28\textwidth}
   	\centering
   	\begin{tikzpicture}[scale=0.7]   
		\draw [line width=0.8pt, black] (1,3) to (3,3);
		\draw [line width=0.8pt, black] (1,3) to (2,2);
		\draw [line width=0.8pt, black] (3,3) to (2,2);
		\draw [line width=0.8pt, black] (1,3) to (1,1);
		\draw [line width=0.8pt, black] (3,3) to (3,1);
		\draw [line width=0.8pt, black] (1,1) to (3,1);
		\draw [line width=0.8pt, black] (1,1) to (2,0);
		\draw [line width=0.8pt, black] (3,1) to (2,0);
		\draw [line width=0.8pt, black] (-1.5,0) to (2,0);
		\draw [line width=0.8pt, black] (-1.5,0) to (1,3);
		\draw [line width=0.8pt, black] (2,0) to (2,2);
        \draw [line width=0.8pt, black] (-1.5,0.5) to (-1.5,0);
        \draw [line width=0.8pt, black] (2,0) to (-1.5,0.5);
        \draw [line width=0.8pt, black] (-1.5,0.5) to (1,3);
        \draw [line width=0.8pt, black] (-1.5,1.5) to (1,3);
        \draw [line width=0.8pt, black] (-1.5,1) to (1,3);
        \draw [line width=0.8pt, black] (-1.5,1) to (2,0);
        \draw [line width=0.8pt, black] (-1.5,1.5) to (2,0);
        \draw [line width=0.8pt, black] (-1.5,1.5) to (-1.5,1);
        \draw [line width=0.8pt, black] (-1.5,2.2) to (-1.5,2.7);
        \draw [line width=0.8pt, black] (-1.5,2.2) to (1,3);
        \draw [line width=0.8pt, black] (-1.5,2.2) to (2,0);
        \draw [line width=0.8pt, black] (-1.5,2.7) to (2,0);
        \draw [line width=0.8pt, black] (-1.5,2.7) to (1,3);
        
        \draw [fill=black,line width=0.8pt] (-1.5,2.2)  circle (2pt);
        \draw [fill=black,line width=0.8pt] (-1.5,2.7)  circle (2pt);
        \draw [fill=black,line width=0.8pt] (-1.5,1.7)  circle (0.25pt);
        \draw [fill=black,line width=0.8pt] (-1.5,1.8)  circle (0.25pt);
        \draw [fill=black,line width=0.8pt] (-1.5,1.9)  circle (0.25pt);
        \draw [fill=black,line width=0.8pt] (-1.5,1.5)  circle (2pt); 
		\draw [fill=black,line width=0.8pt] (-1.5,0)  circle (2pt); 
		\draw [fill=black,line width=0.8pt] (-1.5,1)  circle (2pt); 
        \draw [fill=black,line width=0.8pt] (-1.5,0.5)  circle (2pt);          
		\draw [fill=black,line width=0.8pt] (2,0) node[below=0.5mm] {$x_6$} circle (2pt); 
		\draw [fill=black,line width=0.8pt] (1,1)  circle (2pt); 
        \node at (1.3,1.3) {$x_4$};
		\draw [fill=black,line width=0.8pt] (3,1)  circle (2pt); 
        \node at (2.7,1.3) {$x_3$};
		\draw [fill=black,line width=0.8pt] (1,3) node[above=0.5mm] {$x_1$} circle (2pt);
		\draw [fill=black,line width=0.8pt] (3,3) node[above=0.5mm] {$x_2$} circle (2pt);
		\draw [fill=black,line width=0.8pt] (2,2) circle (2pt);
        \node at (2,2.35) {$x_7$};
		
	\end{tikzpicture}
	\caption{$G\in{\mathcal{G}}_4$}
	\label{fig:g4}
\end{subfigure}
\begin{subfigure}[t]{0.36\textwidth}
   	\centering
   	\begin{tikzpicture}[scale=0.7]   
		\draw [line width=0.8pt, black] (1,1.5) to (3,3);
		\draw [line width=0.8pt, black] (3,3) to (3,0);
		\draw [line width=0.8pt, black] (-0.5,0) to (1,1.5);
        \draw [line width=0.8pt, black] (-0.5,0.5) to (-0.5,0);
        \draw [line width=0.8pt, black] (-0.5,0.5) to (1,1.5);
        \draw [line width=0.8pt, black] (-0.5,1.5) to (1,1.5);
        \draw [line width=0.8pt, black] (-0.5,1) to (1,1.5);
        \draw [line width=0.8pt, black] (-0.5,1.5) to (-0.5,1);
        \draw [line width=0.8pt, black] (-0.5,2.2) to (-0.5,2.7);
        \draw [line width=0.8pt, black] (-0.5,2.2) to (1,1.5);
        \draw [line width=0.8pt, black] (-0.5,2.7) to (1,1.5);
        \draw [line width=0.8pt, black] (1,1.5) to (3,0);
        \draw [line width=0.8pt, black] (1,1.5) to (3,1);
         \draw [line width=0.8pt, black] (1,1.5) to (3,2);
         
        \draw [fill=black,line width=0.8pt] (-0.5,2.2)  circle (2pt);
        \draw [fill=black,line width=0.8pt] (-0.5,2.7)  circle (2pt);
        \draw [fill=black,line width=0.8pt] (-0.5,1.7)  circle (0.25pt);
        \draw [fill=black,line width=0.8pt] (-0.5,1.8)  circle (0.25pt);
        \draw [fill=black,line width=0.8pt] (-0.5,1.9)  circle (0.25pt);
        \draw [fill=black,line width=0.8pt] (-0.5,1.5)  circle (2pt); 
		\draw [fill=black,line width=0.8pt] (-0.5,0)  circle (2pt); 
		\draw [fill=black,line width=0.8pt] (-0.5,1)  circle (2pt); 
        \draw [fill=black,line width=0.8pt] (-0.5,0.5)  circle (2pt);          
		\draw [fill=black,line width=0.8pt] (3,1) node[right=0.5mm] {$x_3$} circle (2pt); 
		\draw [fill=black,line width=0.8pt] (3,0) node[right=0.5mm] {$x_4$} circle (2pt); 
		\draw [fill=black,line width=0.8pt] (1,1.5) node[above=0.5mm] {$z$} circle (2pt);
		\draw [fill=black,line width=0.8pt] (3,2) node[right=0.5mm] {$x_2$} circle (2pt);
		\draw [fill=black,line width=0.8pt] (3,3) node[right=0.5mm] {$x_7$} circle (2pt);

	\end{tikzpicture}
	\caption{$G'$: identify $x_1$ and $x_6$ to $z$ in $G$}
	\label{g4'}
\end{subfigure}
\begin{subfigure}[t]{0.32\textwidth}
   	\centering
   	\begin{tikzpicture}[scale=0.7]   
		\draw [line width=0.8pt, black] (1,1.5) to (3,3);
		\draw [line width=0.8pt, black] (3,3) to (3,0);
		\draw [line width=0.8pt, black] (-0.5,0) to (1,1.5);
        \draw [line width=0.8pt, black] (-0.5,0.5) to (-0.5,0);
        \draw [line width=0.8pt, black] (-0.5,0.5) to (1,1.5);
        \draw [line width=0.8pt, black] (-0.5,1.5) to (1,1.5);
        \draw [line width=0.8pt, black] (-0.5,1) to (1,1.5);
        \draw [line width=0.8pt, black] (-0.5,1.5) to (-0.5,1);
        \draw [line width=0.8pt, black] (-0.5,2.2) to (-0.5,2.7);
        \draw [line width=0.8pt, black] (-0.5,2.2) to (1,1.5);
        \draw [line width=0.8pt, black] (-0.5,2.7) to (1,1.5);
        \draw [line width=0.8pt, black] (1,1.5) to (3,0);
        \draw [line width=0.8pt, black] (1,1.5) to (3,1);
         \draw [line width=0.8pt, black] (1,1.5) to (3,2);
         
        \draw [fill=black,line width=0.8pt] (-0.5,2.2) node[left=0.5mm] {$3$} circle (2pt);
        \draw [fill=black,line width=0.8pt] (-0.5,2.7) node[left=0.5mm] {$2$}  circle (2pt);
        \draw [fill=black,line width=0.8pt] (-0.5,1.7)  circle (0.25pt);
        \draw [fill=black,line width=0.8pt] (-0.5,1.8)  circle (0.25pt);
        \draw [fill=black,line width=0.8pt] (-0.5,1.9)  circle (0.25pt);
        \draw [fill=black,line width=0.8pt] (-0.5,1.5) node[left=0.5mm] {$2$}  circle (2pt); 
		\draw [fill=black,line width=0.8pt] (-0.5,0) node[left=0.5mm] {$3$}  circle (2pt); 
		\draw [fill=black,line width=0.8pt] (-0.5,1) node[left=0.5mm] {$3$}  circle (2pt); 
        \draw [fill=black,line width=0.8pt] (-0.5,0.5) node[left=0.5mm] {$2$}  circle (2pt);          
		\draw [fill=black,line width=0.8pt] (3,1) node[right=0.5mm] {$3$} circle (2pt); 
		\draw [fill=black,line width=0.8pt] (3,0) node[right=0.5mm] {$2$} circle (2pt); 
		\draw [fill=black,line width=0.8pt] (1,1.5) node[above=0.5mm] {$1$} circle (2pt);
		\draw [fill=black,line width=0.8pt] (3,2) node[right=0.5mm] {$2$} circle (2pt);
		\draw [fill=black,line width=0.8pt] (3,3) node[right=0.5mm] {$3$} circle (2pt);

	\end{tikzpicture}
	\caption{a $3$-coloring of $G'$}
	\label{g4'}
\end{subfigure}
\caption{Some related graphs in the proof of Case 1.}
\label{g4}
\end{figure}

{\bf Case 2.} $G\in  {\mathcal{G}}_{10}$. See Figure~\ref{g4g10}(a).

First, we redraw $G_{10}$ as Figure~\ref{g4g10}(a). Let $G'$ be the graph obtained from $G$ by identifying $x_2$ and $x_6$  and deleting parallel edges. Let $z$ denote the  new vertex in $G'$. As shown in Figure~\ref{g4g10}(b). Figure~\ref{g4g10}(c) gives a $3$-coloring of $G'$, so $G'$ is $3$-colorable. If $G'$ contains an induced $P_5$, then it must contain the edge $x_4z$ because $G-\{x_6\}$ is $P_5$-free. In addition, $G'$  cannot contain both $x_8$ and $x_3$, because they are the common neighbors of  $z$ and $x_4$. However, the graph $G'-\{x_3,x_8\}$ is obviously $P_5$-free, a contradiction. Hence, $G'$ is $P_5$-free.

\begin{figure}[htbp]
	\centering
\begin{subfigure}[t]{.3\textwidth}
	\centering
	\begin{tikzpicture}[scale=1.2]   
		\draw [line width=0.8pt, black] (3,0) to (0,0);
		\draw [line width=0.8pt, black] (0,0) to (0,1);
		\draw [line width=0.8pt, black] (0,0) to (2,1);
		\draw [line width=0.8pt, black] (0,0) to (3,1);
		\draw [line width=0.8pt, black] (0,1) to (3,1);
		\draw [line width=0.8pt, black] (1,0) to (1,1);
		\draw [line width=0.8pt, black] (2,0) to (2,1);
		\draw [line width=0.8pt, black] (3,0) to (3,1);
		\draw [line width=0.8pt, black] (3,0) to (0,1);
		\draw [line width=0.8pt, black] (3,0) to (1,1);
		\draw [line width=0.8pt, black] (0,1) to (1.5,-1);
		\draw [line width=0.8pt, black] (1,0) to (1.5,-1);
		\draw [line width=0.8pt, black] (2,0) to (1.5,-1);
		\draw [line width=0.8pt, black] (3,1) to (1.5,-1);

		\draw [fill=black,line width=0.8pt] (0,0) node[below=0.5mm] {$x_4$} circle (1.5pt); 
		\draw [fill=black,line width=0.8pt] (0,1) node[above=0.5mm] {$x_5$} circle (1.5pt); 
		\draw [fill=black,line width=0.8pt] (1,0) circle (1.5pt); 
           \node at (1.25,0.15) {$x_3$};
		\draw [fill=black,line width=0.8pt] (1,1) node[above=0.5mm] {$x_7$} circle (1.5pt); 
		\draw [fill=black,line width=0.8pt] (2,0)  circle (1.5pt);
            \node at (1.75,0.15) {$x_2$};
		\draw [fill=black,line width=0.8pt] (2,1) node[above=0.5mm] {$x_8$} circle (1.5pt);
		\draw [fill=black,line width=0.8pt] (3,0) node[below=0.5mm] {$x_1$} circle (1.5pt);
		\draw [fill=black,line width=0.8pt] (3,1) node[above=0.5mm] {$x_6$} circle (1.5pt);
		\draw [fill=black,line width=0.8pt] (1.5,-1) node[below=0.5mm] {$x_9$} circle (1.5pt);

	\end{tikzpicture}
	\caption{$G=G_{10}$}
	\label{fig:g10}
\end{subfigure}
\begin{subfigure}[t]{0.37\textwidth}
	\centering
	\begin{tikzpicture}[scale=1.2]   
		\draw [line width=0.8pt, black] (3,0) to (0,0);
		\draw [line width=0.8pt, black] (0,0) to (0,1);
		\draw [line width=0.8pt, black] (0,0) to (2,1);
		\draw [line width=0.8pt, black] (1,0) to (1,1);
		\draw [line width=0.8pt, black] (2,0) to (2,1);
		\draw [line width=0.8pt, black] (3,0) to (0,1);
		\draw [line width=0.8pt, black] (3,0) to (1,1);
		\draw [line width=0.8pt, black] (0,1) to (1.5,-1);
		\draw [line width=0.8pt, black] (1,0) to (1.5,-1);
		\draw [line width=0.8pt, black] (2,0) to (1.5,-1);
        \draw [line width=0.8pt, black] (0,1) to (2,1);
        \draw [bend right=50, line width=0.8pt, black] (0,0) to (2,0);
		
		\draw [fill=black,line width=0.8pt] (0,0) node[below=0.5mm] {$x_4$} circle (1.5pt); 
		\draw [fill=black,line width=0.8pt] (0,1) node[above=0.5mm] {$x_5$} circle (1.5pt); 
		\draw [fill=black,line width=0.8pt] (1,0) circle (1.5pt); 
       \node at (1.2,0.2) {$x_3$};
		\draw [fill=black,line width=0.8pt] (1,1) node[above=0.5mm] {$x_7$} circle (1.5pt); 
		\draw [fill=black,line width=0.8pt] (2,0)  circle (1.5pt);
        \node at (1.8,0.18) {$z$};
		\draw [fill=black,line width=0.8pt] (2,1) node[above=0.5mm] {$x_8$} circle (1.5pt); 
		\draw [fill=black,line width=0.8pt] (3,0) node[below=0.5mm] {$x_1$} circle (1.5pt);
		\draw [fill=black,line width=0.8pt] (1.5,-1) node[below=0.5mm] {$x_9$} circle (1.5pt);
	\end{tikzpicture}
	\caption{$G'$: identify $x_2$ and $x_6$ to $z$ in $G$}
	\label{fig:g'}
\end{subfigure}
\begin{subfigure}[t]{0.3\textwidth}
	\centering
	\begin{tikzpicture}[scale=1.2]   
		\draw [line width=0.8pt, black] (3,0) to (0,0);
		\draw [line width=0.8pt, black] (0,0) to (0,1);
		\draw [line width=0.8pt, black] (0,0) to (2,1);
		\draw [line width=0.8pt, black] (1,0) to (1,1);
		\draw [line width=0.8pt, black] (2,0) to (2,1);
		\draw [line width=0.8pt, black] (3,0) to (0,1);
		\draw [line width=0.8pt, black] (3,0) to (1,1);
		\draw [line width=0.8pt, black] (0,1) to (1.5,-1);
		\draw [line width=0.8pt, black] (1,0) to (1.5,-1);
		\draw [line width=0.8pt, black] (2,0) to (1.5,-1);
        \draw [line width=0.8pt, black] (0,1) to (2,1);
        \draw [bend right=50, line width=0.8pt, black] (0,0) to (2,0);
		
		\draw [fill=black,line width=0.8pt] (0,0)  circle (1.5pt); 
        \node at (-0.15,-0.15) {$3$};
		\draw [fill=black,line width=0.8pt] (0,1) node[above=0.5mm] {$1$} circle (1.5pt); 
		\draw [fill=black,line width=0.8pt] (1,0) circle (1.5pt); 
       \node at (0.9,0.2) {$1$};
		\draw [fill=black,line width=0.8pt] (1,1) node[above=0.5mm] {$2$} circle (1.5pt); 
		\draw [fill=black,line width=0.8pt] (2,0) circle (1.5pt);
        \node at (1.9,0.18) {$2$};
		\draw [fill=black,line width=0.8pt] (2,1) node[above=0.5mm] {$1$} circle (1.5pt); 
		\draw [fill=black,line width=0.8pt] (3,0) node[below=0.5mm] {$3$} circle (1.5pt);
		\draw [fill=black,line width=0.8pt] (1.5,-1) node[below=0.5mm] {$3$} circle (1.5pt);
	\end{tikzpicture}
	\caption{a $3$-coloring of $G'$}
	\label{fig:g10color}
\end{subfigure}
\caption{Some related graphs in the proof of  Case 2.}
\label{g4g10}
\end{figure}
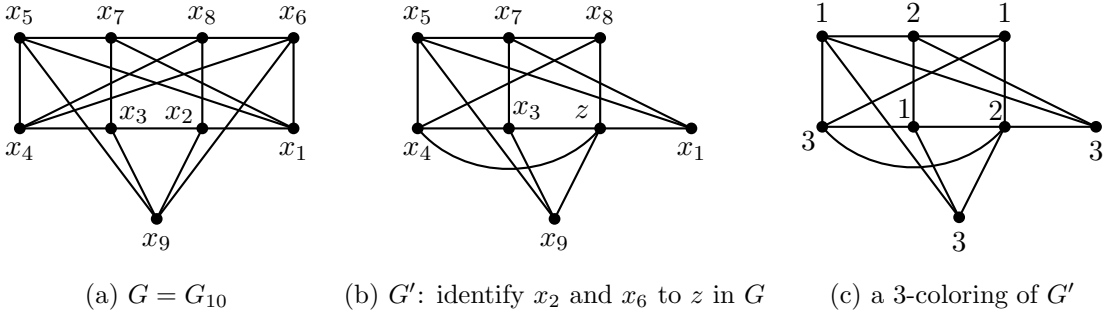

Next, we claim that any $k$-coloring $\alpha$ of $G$ can be transformed to a $k$-coloring $\alpha'$ of $G$ such that $\alpha'(x_2)=\alpha'(x_6)$.	 If $\alpha(x_2)=\alpha(x_6)$, then we are done. 
So we assume  $\alpha(x_2)\neq\alpha(x_6)$. Without loss of generality, we assume that $\alpha(x_2)=1$ and $\alpha(x_6)=2$. If  $\alpha(x_3)\neq 2$ or $\alpha(x_4)\neq 1$, then recolor $x_2$ by color $2$ or  $x_6$ by  color $1$, which yields a $k$-coloring $\alpha'$ of $G$ with $\alpha'(x_2)=\alpha'(x_6)$. So we suppose that $\alpha(x_3)= 2$ and $\alpha(x_4)= 1$.
If $\{\alpha(x_5),\alpha(x_8)\} \neq \{3,4\}$, then recolor $x_4$ by a color in $\{3,4\}\setminus \{\alpha(x_5), \alpha(x_8)\}$ and  $x_6$ by color $1$  in order, which yields a $k$-coloring $\alpha'$ of $G$ with $\alpha'(x_2)=\alpha'(x_6)=1$.  If $\{\alpha(x_5),\alpha(x_8)\} =\{3,4\}$, then $\alpha(x_7)=1$. Recolor $x_3$ by a color in $\{3,4\}\setminus \{\alpha(x_9)\}$ and  $x_2$ by color $2$ in order, which yields a $k$-coloring $\alpha'$ of $G$ with $\alpha'(x_2)=\alpha'(x_6)=2$, contradicting Subclaim~\ref{A}.
\end{proof}

\begin{claim}\label{nP5} 
$G$ has no induced $\overline{P_5}$.
\end{claim}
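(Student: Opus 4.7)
\noindent The plan is to argue by contradiction, mirroring the structure of Claims~\ref{ngem} and~\ref{nC5}. Suppose $G$ contains an induced $\overline{P_5}$ on vertices $\{u_1,u_2,u_3,u_4,u_5\}$ where the non-edges are precisely $u_iu_{i+1}$ for $i\in\{1,2,3,4\}$. Then $G[\{u_1,u_3,u_5\}]$ is a triangle, $G[\{u_1,u_4,u_2,u_5\}]$ is an induced $4$-cycle, and these two subgraphs share the edge $u_1u_5$. Fix a proper $3$-coloring $\alpha$ of $G$; without loss of generality $\alpha(u_1)=1$, $\alpha(u_3)=2$, and $\alpha(u_5)=3$. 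The $4$-cycle $u_1u_4u_2u_5u_1$ then forces $\alpha(u_4)\in\{2,3\}$ and $\alpha(u_2)\in\{1,2\}$ with $\alpha(u_4)\neq\alpha(u_2)$, yielding only a handful of colour sub-cases.

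Each of $u_2,u_3,u_4$ has exactly two neighbours inside $H:=\{u_1,\dots,u_5\}$, so Claim~\ref{dG} guarantees each of them has a neighbour outside $H$. I would focus first on an outside neighbour $w$ of $u_3$ (so $\alpha(w)\in\{1,3\}$) and extract properties in the spirit of (P1)--(P3) from the proof of Claim~\ref{ngem}. Note that $u_4u_2u_5u_3$ is an induced $P_4$ sitting inside $H$: if $w$ is adjacent to none of $u_5,u_2,u_4$, then $wu_3u_5u_2u_4$ is an induced $P_5$, contradicting $P_5$-freeness of $G$. Combined with the fact that $G$ is $K_4$-free (by $3$-colourability applied to the triangle $u_1u_3u_5$), gem-free (Claim~\ref{ngem}), $C_5$-free (Claim~\ref{nC5}), and false-twin-free (Claim~\ref{nft}), only a small number of attachment patterns for $w$ survive; a symmetric analysis for outside neighbours of $u_2$ and $u_4$ should further pin down the local structure.

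The main obstacle is the case bookkeeping: a handful of colour sub-cases, each with several possible attachment patterns for the outside neighbours of $u_2$, $u_3$, $u_4$. I expect to organise the argument as in Claim~\ref{ngem}, first deriving auxiliary colour-and-adjacency properties for each outside neighbour (for instance, any outside neighbour of $u_3$ of a prescribed colour must be complete to a specific subset of $H$), and then combining these properties --- paralleling the chain (P1)$\Rightarrow$(P2)$\Rightarrow$(P3) of Claim~\ref{ngem} --- until a concrete induced $P_5$ appears or the no-false-twin condition is violated. Once Claim~\ref{nP5} is established, $G$ is $(P_5,C_5,\overline{P_5})$-free and $3$-chromatic (the case $\chi(G)\leq 2$ being already covered by Theorem~\ref{2colorp5}), whence Theorem~\ref{3p5} yields that $\mathcal{R}_k(G)$ is connected for every $k\geq 4$, contradicting the minimality of $G$ and completing the proof of Theorem~\ref{mainthm1.1}.
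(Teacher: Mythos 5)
Your proposal is a strategy outline rather than a proof: the decisive case analysis is everywhere deferred (``I would focus first on\ldots'', ``should further pin down\ldots'', ``I expect to organise\ldots''), so as written there is nothing to check. More seriously, the termination criterion you announce --- keep adjoining outside neighbours of $u_2,u_3,u_4$ and derive adjacencies ``until a concrete induced $P_5$ appears or the no-false-twin condition is violated'' --- cannot work, because there is a graph that survives every one of your tests. In your labelling (non-edges $u_iu_{i+1}$), add a single vertex $w$ adjacent to exactly $u_2,u_3,u_4$: the result is the triangular prism, with triangles $u_1u_3u_5$ and $u_2u_4w$ joined by the matching $u_1u_4,\,u_3w,\,u_5u_2$. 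This graph is $3$-chromatic, $P_5$-free, $C_5$-free, gem-free, has no false twin, and is $3$-regular, so none of Claims~\ref{dG}, \ref{nft}, \ref{ngem}, \ref{nC5} nor $P_5$-freeness excludes it, and no vertex is forced to have a further neighbour outside the configuration. The paper's proof runs into exactly this graph (its Case~2, where the configuration closes up into the $6$-vertex graph $G_1$) and at that point must switch to an entirely different tool: the vertex-identification argument of Subclaim~\ref{A} inside the proof of Claim~\ref{nC5}, which uses the minimality of $G$ to transform any $k$-coloring into one where two prescribed nonadjacent vertices agree. Your plan contains no analogue of this step, and without it the argument cannot close.

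A secondary difference: the paper does not derive (P1)--(P3)-style properties directly from the $\overline{P_5}$. It first proves a standalone structural lemma (Subclaim~\ref{B}: no induced $P_4$ $x_1x_2x_3x_4$ admits two further vertices both adjacent to $x_1$ and $x_2$), and then uses that lemma repeatedly to eliminate attachment patterns of outside neighbours in very few steps. Attempting the case analysis without such an organising device, as you propose, is likely to be substantially longer even in the cases where it does terminate. Finally, note that the concluding reduction to Theorem~\ref{3p5} does not need a separate appeal to Theorem~\ref{2colorp5}: the paper proves the statement for all $3$-colorable $P_5$-free graphs (Theorem~\ref{mainthm1.1}), and Theorem~\ref{3p5} is applied to the minimal counterexample $G$, which one checks is $3$-chromatic.
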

\begin{proof}
We first give a subclaim. Let  $[\{x,y\},x_1x_2x_3x_4]$ denote the graph structure  consisting of an induced $P_4=x_1x_2x_3x_4$ and two vertices $x$ and $y$ such that $\{x_1,x_2\}\subseteq N(x)\cap N(y)$ (see Figure~\ref{2cases}(a)). Note that the adjacency relationship between $x$ and $y$, and between $\{x,y\}$ and $\{x_3,x_4\}$ is uncertain. 
Let  $\mathcal{F}_G=\{[\{x,y\},x_1x_2x_3x_4]| G$
contains the  graph structure
$[\{x,y\},x_1x_2x_3x_4]\}$.

\begin{subclaim}\label{B}  $\mathcal{F}_G$ is an empty set.
\end{subclaim}

\begin{proof} 
Suppose that $G$ contains a graph structure $[\{x,y\},x_1x_2x_3x_4]\in\mathcal{F}_G$. Since $G$ is $3$-colorable, $x\not\sim y$. By Claim \ref{ngem}, we have that 
$|N(x)\cap \{x_1,x_2,x_3,x_4\}|\neq 4$ and $|N(y)\cap \{x_1,x_2,x_3,x_4\}|\neq 4$.
Suppose $N(x)\cap \{x_1,x_2,x_3,x_4\}=N(y)\cap \{x_1,x_2,x_3,x_4\}$.
Since $x\not\sim y$,  Claim~\ref{nft} implies that there exist $x'\in N(x)\setminus N(y)$ and $y'\in N(y)\setminus N(x)$. Then $x'\sim x_1$ or $y'\sim x_1$, otherwise $x'xx_1yy'$ is an induced $P_5$ or $C_5$. Without loss of generality, we assume $x'\sim x_1$. Since $G$ is $3$-colorable, $x'\not\sim x_2$. Then $x_1x'xx_2y$ forms an induced gem, a contradiction.
So $N(x)\cap \{x_1,x_2,x_3,x_4\}\neq N(y)\cap \{x_1,x_2,x_3,x_4\}$. By the symmetry of $x$ and $y$, it suffices to consider the following cases.
If $N(x)\cap \{x_1,x_2,x_3,x_4\}=\{x_1,x_2\}$, then $xx_1yx_3x_4$ is an induced $P_5$ when $N(y)\cap \{x_1,x_2,x_3,x_4\}=\{x_1,x_2,x_3\}$ and $xx_1yx_4x_3$ is an induced $P_5$ when  $N_G(y)\cap \{x_1,x_2,x_3,x_4\}=\{x_1,x_2,x_4\}$,  a contradiction. 
If  $N(x)\cap \{x_1,x_2,x_3,x_4\}=\{x_1,x_2,x_3\}$ and  $N(y)\cap \{x_1,x_2,x_3,x_4\}=\{x_1,x_2,x_4\}$, then $x_2yx_1xx_3$ is an induced gem, a contradiction. Hence $\mathcal{F}_G$ is an empty set.
\end{proof}

\begin{figure}[!htbp]
	\centering
 \begin{subfigure}[t]{.23\textwidth}
	\centering
	\begin{tikzpicture}[scale=1.0]   
		\draw [line width=0.8pt, black] (0,3) to (0,0);
		\draw [line width=0.8pt, black] (0,2) to (-1,2);
		\draw [line width=0.8pt, black] (0,3) to (-1,2);
		\draw [line width=0.8pt, black] (0,2) to (1,2);
		\draw [line width=0.8pt, black] (0,3) to (1,2);

		\draw [fill=black,line width=0.8pt] (0,0) node[right=0.5mm] {$x_4$} circle (2pt); 
		\draw [fill=black,line width=0.8pt] (0,1) node[right=0.5mm] {$x_3$} circle (2pt); 
		\draw [fill=black,line width=0.8pt] (0,2) circle (2pt);
            \node at (0.4,1.65) {$x_2$};
		\draw [fill=black,line width=0.8pt] (0,3) node[above=0.5mm] {$x_1$} circle (2pt); 
		\draw [fill=black,line width=0.8pt] (-1,2) node[below=0.5mm] {$x$} circle (2pt);
		\draw [fill=black,line width=0.8pt] (1,2) node[below=0.5mm] {$y$} circle (2pt);

	\end{tikzpicture}
	\caption{$[\{x,y\},x_1x_2x_3x_4]$}
	\label{fig:H}
\end{subfigure}
	\begin{subfigure}[t]{.2\textwidth}
		\centering
		\begin{tikzpicture}[scale=1.0]  
		\draw [line width=0.8pt, black] (0,3) to (0,0);
		\draw [line width=0.8pt, black] (0,2) to (-1,2);
		\draw [line width=0.8pt, black] (0,3) to (-1,2);
		\draw [line width=0.8pt, black] (0,0) to (-1,2);

		\draw [fill=black,line width=0.8pt] (0,0) node[below=0.5mm] {$u_4$} circle (2pt); 
		\draw [fill=black,line width=0.8pt] (0,1) node[right=0.5mm] {$u_3$} circle (2pt); 
		\draw [fill=black,line width=0.8pt] (0,2) node[right=0.5mm] {$u_2$} circle (2pt); 
		\draw [fill=black,line width=0.8pt] (0,3) node[right=0.5mm] {$u_1$} circle (2pt); 
	\draw [fill=black,line width=0.8pt] (-1,2) node[left=0.5mm] {$u$} circle (2pt);

	\end{tikzpicture}
	\caption{$\overline{P_5}$}
	\label{fig:case1}
\end{subfigure}
	\begin{subfigure}[t]{.2\textwidth}
		\centering
		\begin{tikzpicture}[scale=1.0]  
		\draw [line width=0.8pt, black] (0,3) to (0,0);
		\draw [line width=0.8pt, black] (0,2) to (-1,2);
		\draw [line width=0.8pt, black] (0,3) to (-1,2);
		\draw [line width=0.8pt, black] (0,0) to (1,1);
		\draw [line width=0.8pt, black] (0,1) to (1,1);
		\draw [line width=0.8pt, black] (0,0) to (-1,2);

		\draw [fill=black,line width=0.8pt] (0,0) node[below=0.5mm] {$u_4$} circle (2pt); 
		\draw [fill=black,line width=0.8pt] (0,1)  circle (2pt); 
        \node at (-0.29,1.3) {$u_3$};
		\draw [fill=black,line width=0.8pt] (0,2) node[right=0.5mm] {$u_2$} circle (2pt); 
		\draw [fill=black,line width=0.8pt] (0,3) node[right=0.5mm] {$u_1$} circle (2pt); 
		\draw [fill=black,line width=0.8pt] (-1,2) node[left=0.5mm] {$u$} circle (2pt);
		\draw [fill=black,line width=0.8pt] (1,1) node[right=0.5mm] {$v$} circle (2pt);

	\end{tikzpicture}
	\caption{$v\not\sim u_1$}
	\label{fig:case1}
\end{subfigure}
\begin{subfigure}[t]{.2\textwidth}
	\centering
	\begin{tikzpicture}[scale=1.0]   
		\draw [line width=0.8pt, black] (0,3) to (0,0);
		\draw [line width=0.8pt, black] (0,2) to (-1,2);
		\draw [line width=0.8pt, black] (0,3) to (-1,2);
		\draw [line width=0.8pt, black] (0,0) to (1,1);
		\draw [line width=0.8pt, black] (0,1) to (1,1);
		\draw [line width=0.8pt, black] (0,0) to (-1,2);
		\draw [line width=0.8pt, black] (1,1) to (0,3);

		\draw [fill=black,line width=0.8pt] (0,0) node[below=0.5mm] {$u_4$} circle (2pt); 
		\draw [fill=black,line width=0.8pt] (0,1) circle (2pt);
        \node at (-0.29,1.3) {$u_3$};
		\draw [fill=black,line width=0.8pt] (0,2)  circle (2pt); 
        \node at (-0.29,2.3) {$u_2$};
		\draw [fill=black,line width=0.8pt] (0,3) node[right=0.5mm] {$u_1$} circle (2pt); 
		\draw [fill=black,line width=0.8pt] (-1,2) node[left=0.5mm] {$u$} circle (2pt);
		\draw [fill=black,line width=0.8pt] (1,1) node[right=0.5mm] {$v$} circle (2pt);

	\end{tikzpicture}
	\caption{$v\sim u_1$}
	\label{fig:case2.}
\end{subfigure}

\caption{Some related graphs in the proof of Claim~\ref{nP5}.}
\label{2cases}
\end{figure}
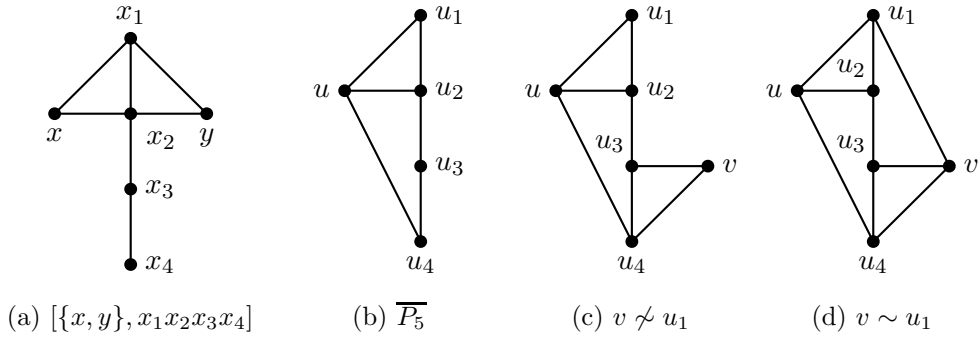

Now suppose that $G$ has an induced $\overline{P_5}$ shown in Figure~\ref{2cases}(b), where $u_1u_2u_3u_4$ is an induced $P_4$ and $u$ is adjacent to $u_1,u_2,u_4$. Since $u_4\not \sim u_2$, Claim~\ref{nft} implies that there exists a vertex  $v\in N(u_4)\setminus N(u_2)$. Note that $v\sim u_3$, otherwise $u_1u_2u_3u_4v$ is an induced  $P_5$ or $C_5$. We need to handle  the following two cases.

{\bf Case 1.} $v\not\sim u_1$. See Figure~\ref{2cases}(c).

Since $\delta(G)\geq 3$, there exists a vertex $x\in N(u_1)\setminus\{u,u_2\}$. By Subclaim~\ref{B}, $x\not \sim u_2$, otherwise $[\{u,x\}, u_1u_2u_3u_4]\in \mathcal{F}_G$. Then $x\sim u_3$, otherwise $xu_1u_2u_3u_4$ is an induced $P_5$ or $C_5$. It follows that  $x\not \sim u_4$, otherwise $[\{v,x\}, u_4u_3u_2u_1]\in \mathcal{F}_G$. Then $x\sim u$, otherwise $u_1xu_3u_4u$ is an induced $C_5$. Now $[\{u_2,x\}, u_1uu_4u_3]\in \mathcal{F}_G$, contrary to Subclaim~\ref{B}. 

{\bf Case 2.} $v\sim u_1$. See Figure~\ref{2cases}(d).

Note that $u\not\sim v$, otherwise  $uu_2u_1vu_4$ is an induced gem.
Then the graph shown in Figure~\ref{2cases}(d) is an induced graph of $G$, let $G_1$ denote the graph. If $G=G_1$, then do the same argument as Case 1 of Claim~\ref{nC5}, we have that $G$ is $k$-mixing, a contradiction. So $V(G)\setminus V(G_1)\neq \emptyset$.
Since all vertices of $G_1$ are symmetry, without loss of generality, we assume that $w\in V(G)\setminus V(G_1)$ and  $w\sim v$. 
Note that $uu_2u_3v$ is an induced $P_4$. It follows that $w\not\sim u_3$, otherwise $[\{w,u_4\}, vu_3u_2u]\in \mathcal{F}_G$. Then
$w\sim u_2$, otherwise $uu_2u_3vw$ is an induced $P_5$ or $C_5$.  Additionally, we have $w\not\sim u$ and $w\not\sim u_4$, otherwise $[\{w,u_1\}, uu_2u_3u_4]\in \mathcal{F}_G$ and $[\{w,u_3\}, u_4vu_1u]\in \mathcal{F}_G$.   Then we get that $uu_2wvu_4$ is an induced $C_5$, a contradiction. 
\end{proof} 
Now we get that $G$ is $3$-colorable $\{P_5,\overline{P_5},C_5\}$-free. Then by Lemma~\ref{3p5}, $G$ is $k$-mixing, contradicting our assumption. This completes the proof of Theorem
\ref{mainthm1}.
\qed

\section{Proof of Theorem~\ref{mainthm2}}\label{main2}

In this section, for any $t\geq 4$ and $t+1 \leq k \leq {t\choose2}$, we  construct  a $P_5$-free graph with chromatic number $t$ that has a frozen $k$-coloring. 

Let $S=\{u_1,u_2,\ldots,u_{2k}\}$. Let $T=\{\{a,b\}, a,b\in\{1,2,\ldots,t\}, a\neq b\}$, which has ${t\choose2}$ elements. Let $P=\{\{1,2\},\{2,3\},\ldots,\{t-1,t\},\{t,1\}\}$. Note that $P\subseteq T$. Let $\phi:S \to\{1,2,\ldots,k\}$ be a mapping such that $\phi(u_{2i-1})=\phi(u_{2i})=i$ for each $i\in\{1,2,\ldots,k\}$. Let $\alpha:S\to \{1,2,\ldots, t\}$ be a mapping such that $\{\alpha(u_{2i-1}),\alpha(u_{2i})\}\in P$ for $i\in\{1,2,\ldots,t\}$, $\{\alpha(u_{2i-1}),\alpha(u_{2i})\}\in T\setminus P$ for $i\in\{t+1,\ldots,k\}$, and $\{\alpha(u_{2i-1}),\alpha(u_{2i})\}\neq\{\alpha(u_{2j-1}),\alpha(u_{2j})\}$ for any $i, j\in\{1,\ldots,k\}$ and $i\neq j$. Note that it is possible because $t+1\leq k \leq\binom{t}{2}$. Now we construct a graph $G_{t,k}$ with the vertex set $S$ and any two vertices $u_i,u_j\in S$ is an edge of $G$ if and only if $\phi(u_i)\neq \phi(u_j)$ and $\alpha(u_i)\neq \alpha(u_j)$. Note that $\phi$ is a $k$-coloring and $\alpha$ is a $t$-coloring of $G_{t,k}$. So $G_{t,k}$ is $t$-colorable. Since $G_{t,k}[\{u_1,u_3,\dots,u_{2t-1}\}]$ is a clique with size $t$,  we have $\chi(G_{t,k})=t$. Next we show that  $G_{t,k}$  is $P_5$-free and $\phi$ is a frozen coloring of $G_{t,k}$. 
The graph $G_{4,5}$ shown in Figure~\ref{fig:4coloring5frozen}, where 
the label on $u_i$ represents the color pair $(\phi(u_i),\alpha(u_i))$ for $i\in\{1,2,\ldots, 10\}$.


\begin{figure}[htbp]
	\centering
	\begin{tikzpicture}[scale=1.2]   
		\draw [line width=0.8pt, black] (0,4) to (1,1);
  \draw [line width=0.8pt, black] (0,4) to (1.5,2);
  \draw [line width=0.8pt, black] (0,4) to (0,0);
		\draw [line width=0.8pt, black] (0,4) to (-1,0);
		\draw [line width=0.8pt, black] (0,4) to (-2,1);
		\draw [line width=0.8pt, black] (0,4) to (-1,4);
		\draw [line width=0.8pt, black] (1,3) to (1,1);
		\draw [line width=0.8pt, black] (1,3) to (0,0);
		\draw [line width=0.8pt, black] (1,3) to (-1,0);
  \draw [line width=0.8pt, black] (1,3) to (-2,1);
  \draw [line width=0.8pt, black] (1,3) to (-2,3);
		\draw [line width=0.8pt, black] (1,3) to (-2.5,2);
        \draw [line width=0.8pt, black] (1,3) to (-1,4);
        \draw [line width=0.8pt, black] (1.5,2) to (0,0);
        \draw [line width=0.8pt, black] (1.5,2) to (-1,0);
        \draw [line width=0.8pt, black] (1.5,2) to (-2,1);
        \draw [line width=0.8pt, black] (1.5,2) to (-2.5,2);
        \draw [line width=0.8pt, black] (1.5,2) to (-2,3);
        \draw [line width=0.8pt, black] (1.5,2) to (-1,4);
        \draw [line width=0.8pt, black] (1,1) to (-1,0);
        \draw [line width=0.8pt, black] (1,1) to (-2.5,2);
        \draw [line width=0.8pt, black] (1,1) to (-2,1);
        \draw [line width=0.8pt, black] (1,1) to (-2,3);
        \draw [line width=0.8pt, black] (0,0) to (-2,1);
        \draw [line width=0.8pt, black] (0,0) to (-2.5,2);
        \draw [line width=0.8pt, black] (0,0) to (-2,3);
        \draw [line width=0.8pt, black] (-1,0) to (-2.5,2);
        \draw [line width=0.8pt, black] (-1,0) to (-1,4);
        \draw [line width=0.8pt, black] (-1,0) to (-2,3);
        \draw [line width=0.8pt, black] (-2,1) to (-1,4);
        \draw [line width=0.8pt, black] (-2,1) to (-2,3);
        \draw [line width=0.8pt, black] (-2.5,2) to (-1,4);
		
		\draw [fill=black,line width=0.8pt] (0,4) circle (2pt);
        \node at (0,4.2) {$u_1$};
        \node at (0.5,4) {(\textcolor{black}{$1$},$1$)};
		\draw [fill=black,line width=0.8pt] (1,3) circle (2pt); 
        \node at (1,3.2) {$u_2$};
        \node at (1.5,3) {(\textcolor{black}{$1$},$2$)};
		\draw [fill=black,line width=0.8pt] (1.5,2) circle (2pt);  
        \node at (1.6,2.25) {$u_3$};
        \node at (2.0,2) {(\textcolor{black}{$2$},$2$)};
        \draw [fill=black,line width=0.8pt] (1,1) circle (2pt);
        \node at (1.3,1) {$u_4$};
        \node at (1.2,0.7) {(\textcolor{black}{$2$},$3$)};
        \draw [fill=black,line width=0.8pt] (0,0) circle (2pt);
        \node at (0.3,0) {$u_5$};
        \node at (0,-0.3) {(\textcolor{black}{$3$},$3$)};
        \draw [fill=black,line width=0.8pt]  (-1,0) circle (2pt);
        \node at (-1.3,0) {$u_6$};
        \node at (-1.2,-0.3) {(\textcolor{black}{$3$},$4$)};
        \draw [fill=black,line width=0.8pt] (-2,1) circle (2pt);
        \node at (-2.3,1) {$u_7$};
        \node at (-2.3,0.7) {(\textcolor{black}{$4$},$4$)};
        \draw [fill=black,line width=0.8pt] (-2.5,2) circle (2pt);
        \node at (-2.6,2.25) {$u_8$};
        \node at (-3.0,2) {({$4$},1)};
        \draw [fill=black,line width=0.8pt] (-2,3) circle (2pt);
        \node at (-1.9,3.2) {$u_9$};
        \node at (-2.5,3) {(\textcolor{black}{$5$},$1$)};
        \draw [fill=black,line width=0.8pt]  (-1,4) circle (2pt);
        \node at (-1,4.2) {$u_{10}$};
        \node at (-1.5,4) {(\textcolor{black}{$5$},$3$)};
        
	\end{tikzpicture}
	\caption{The graph $G_{4,5}$.}
	\label{fig:4coloring5frozen}
\end{figure}
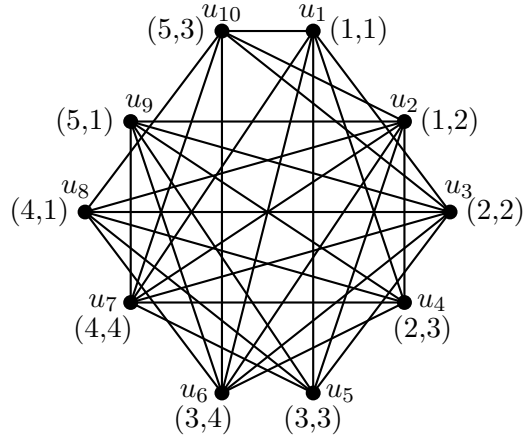

\begin{claim}
$\phi$ is a frozen $k$-coloring of $G_{t,k}$.    
\end{claim}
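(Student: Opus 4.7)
The plan is to verify two things directly from the definitions: that $\phi$ is a proper $k$-coloring, and that every vertex $u_i$ has all $k$ colors appearing in its closed neighborhood $N_{G_{t,k}}[u_i]$. The second property is precisely the frozen condition (since no vertex then has a color available to change to), so together they give that $\phi$ is a frozen $k$-coloring.

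For properness, any edge $u_iu_j$ of $G_{t,k}$ satisfies $\phi(u_i) \neq \phi(u_j)$ by the very definition of the edge set, so $\phi$ is a proper $k$-coloring.

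For the frozen property, I would fix an arbitrary vertex $u_i \in S$ and an arbitrary color $c \in \{1,2,\ldots,k\}$, and show that some vertex of $N_{G_{t,k}}[u_i]$ receives color $c$ under $\phi$. If $c = \phi(u_i)$, then $u_i$ itself witnesses this. Otherwise $c \neq \phi(u_i)$, and the only two vertices assigned color $c$ by $\phi$ are $u_{2c-1}$ and $u_{2c}$. By construction, the pair $\{\alpha(u_{2c-1}),\alpha(u_{2c})\}$ consists of two distinct elements, say $a$ and $b$. The key observation is then:
\begin{itemize}
    \item $u_i \not\sim u_{2c-1}$ forces $\alpha(u_i) = \alpha(u_{2c-1})$ (since $\phi(u_i) \neq c = \phi(u_{2c-1})$ already holds);
    \item $u_i \not\sim u_{2c}$ likewise forces $\alpha(u_i) = \alpha(u_{2c})$.
\end{itemize}
If both failed simultaneously, then $\alpha(u_i)$ would equal both $a$ and $b$, contradicting $a \neq b$. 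Hence $u_i$ is adjacent to at least one of $u_{2c-1}, u_{2c}$, so color $c$ appears in $N_{G_{t,k}}[u_i]$.

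There is essentially no obstacle here: the argument is a short combinatorial check that uses nothing about $P_5$-freeness and relies only on the fact that $\phi$ pairs up the $2k$ vertices into color classes of size $2$ whose $\alpha$-labels form a $2$-element set. The only point requiring a word of care is the distinction between the two reasons $u_i \not\sim u_j$ can occur (equal $\phi$-values versus equal $\alpha$-values); once $\phi(u_i) \neq c$ is assumed, only the $\alpha$-obstruction remains, and it can block at most one of the two candidate neighbors.
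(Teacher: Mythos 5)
Your proof is correct and follows essentially the same argument as the paper: both reduce the frozen condition to showing that each vertex is adjacent to at least one of the two vertices $u_{2c-1}, u_{2c}$ carrying each other color $c$, and both derive this from the fact that $\alpha(u_{2c-1})\neq\alpha(u_{2c})$, so the $\alpha$-obstruction to adjacency can block at most one of the pair. The only cosmetic difference is that the paper argues for $u_1$ without loss of generality while you treat an arbitrary vertex directly, and you additionally spell out properness, which the paper notes separately beforehand.
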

\begin{proof}
We need to prove $\{\phi(v): v\in N(u)\}=\{1,2,\ldots,k\}\setminus\{\phi(u)\}$ for $u\in S$. Without loss of generality, we prove that it holds for $u_1$.  In other words, we need to prove for any $2\leq j\leq k$, $u_1$  is adjacent to at least one vertex in $\{u_{2j-1}, u_{2j}\}$. Let $j\in\{2,\ldots,k\}$. Note that $\phi(u_1)\neq \phi(u_{2j-1})$ and $\phi(u_1)\neq \phi(u_{2j})$. Then by the construction of $G_{t,k}$, it suffices to prove  $\alpha(u_1)\neq \alpha(u_{2j-1})$ or $\alpha(u_1)\neq \alpha(u_{2j})$. This is established because  $\alpha(u_{2j-1})\neq \alpha(u_{2j})$.
\end{proof}

\begin{claim}
$G_{t,k}$  is $P_5$-free.   
\end{claim}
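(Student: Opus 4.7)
My plan is to argue by contradiction. Suppose $G_{t,k}$ contains an induced path $x_1x_2x_3x_4x_5$. The starting observation is that for any two distinct vertices $u, v \in S$, we have $uv \notin E(G_{t,k})$ if and only if either $\phi(u) = \phi(v)$ (equivalently $\{u,v\}$ is one of the $\phi$-pairs $\{u_{2i-1}, u_{2i}\}$) or $\alpha(u) = \alpha(v)$, while each edge of the $P_5$ forces both the $\phi$- and the $\alpha$-values of its endpoints to differ.

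I will then split the analysis according to the $\phi$-partition of $\{x_1,\ldots,x_5\}$, that is, the way the five vertices distribute among $\phi$-color classes. Because each $\phi$-class of $S$ contains exactly two elements, this partition has block-size pattern $(1,1,1,1,1)$, $(2,1,1,1)$, or $(2,2,1)$, and every block of size $2$ must be one of the six non-edges $\{x_1x_3, x_1x_4, x_1x_5, x_2x_4, x_2x_5, x_3x_5\}$ of the $P_5$. A short enumeration yields one, six, and six subcases respectively (thirteen in total).

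In each subcase I read off, from the remaining non-edges of the $P_5$, the forced equalities $\alpha(x_i) = \alpha(x_j)$, and combine them with the four $\alpha$-inequalities produced by the edges $x_1x_2, x_2x_3, x_3x_4, x_4x_5$. In almost every subcase, two chained equalities collapse onto an edge of the path and give an immediate contradiction. For instance, the all-singleton case $(1,1,1,1,1)$ is settled in one line: all six non-edges become $\alpha$-equalities, so $\alpha(x_3) = \alpha(x_1) = \alpha(x_4)$, contradicting the edge $x_3x_4$. The $(2,1,1,1)$ subcases, and most $(2,2,1)$ subcases, collapse in the same way.

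The one subcase I expect to be the main obstacle is the $(2,2,1)$-configuration with $\phi$-pairs $\{x_1, x_4\}$ and $\{x_2, x_5\}$. Here the four remaining non-edges yield $\alpha(x_1) = \alpha(x_3) = \alpha(x_5)$ and $\alpha(x_2) = \alpha(x_4)$, and this is consistent with every edge of the path, so no local contradiction appears. Instead I will invoke the construction-level hypothesis that the unordered $\alpha$-pairs $\{\alpha(u_{2i-1}), \alpha(u_{2i})\}$ are pairwise distinct: the two $\phi$-pairs $\{x_1, x_4\}$ and $\{x_2, x_5\}$ would then give the same $\alpha$-set, since $\{\alpha(x_1), \alpha(x_4)\} = \{\alpha(x_5), \alpha(x_2)\}$, which is forbidden. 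This is the only place where distinctness of the $\alpha$-pairs enters the argument, and it is precisely what forces the upper bound $k \le \binom{t}{2}$ in the construction.
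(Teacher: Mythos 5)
Your proposal is correct. The engine is the same as the paper's: a non-edge of the induced $P_5$ forces equality under $\phi$ or under $\alpha$, an edge forces inequality under both, and the unique configuration that survives the local analysis --- $\phi$-pairs $\{x_1,x_4\}$ and $\{x_2,x_5\}$ with $\alpha$ constant on $\{x_1,x_3,x_5\}$ and on $\{x_2,x_4\}$ --- is eliminated by the stipulation that the sets $\{\alpha(u_{2i-1}),\alpha(u_{2i})\}$ are pairwise distinct, which is also exactly how the paper concludes. I verified your thirteen subcases; in each of the twelve routine ones two chained $\alpha$-equalities do indeed meet across an edge of the path. The difference is organizational. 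The paper does not enumerate $\phi$-partitions: it first proves that if $x_k$ is non-adjacent to both endpoints of an edge $x_ix_j$ of the path, then $\theta(x_k)\in\{\theta(x_i),\theta(x_j)\}$ for $\theta\in\{\phi,\alpha\}$, and deduces that each of $\phi,\alpha$ restricted to the path follows one of only two patterns, $ababa$ or $abcab$; since each $\phi$-class has exactly two elements, $\phi$ must be $abcab$, and since $\phi$-equal vertices are $\alpha$-distinct, $\alpha$ must be $ababa$. That lemma compresses your twelve routine subcases into a few lines, whereas your enumeration is more elementary and makes visible that the distinctness of the $\alpha$-pairs (hence the bound $k\leq\binom{t}{2}$) is invoked exactly once; otherwise the two arguments coincide.
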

\begin{proof}
Suppose that $G_{t,k}$ contains an induced $P_5$, then we denote the induced $P_5$ by $P=x_1x_2x_3x_4x_5$. Let $\theta$  denote the mapping $\phi$  or $\alpha$. 

We first claim that for any two adjacent vertices $x_i$ and $x_j$ of $P$, if $x_k\not\sim x_i$ and $x_k\not\sim x_j$, then $\theta(x_k)=\theta(x_i)$ or $\theta(x_k)=\theta(x_j)$, where $i,j,k\in\{1,2,\ldots,5\}$. 
Suppose that $\theta=\phi$ and $\phi(x_k)\neq \phi(x_i)$,  $\phi(x_k)\neq \phi(x_j)$. Since $\alpha(x_i)\neq \alpha(x_j)$, $\alpha(x_k)\neq\alpha(x_i)$ or $\alpha(x_k)\neq \alpha(x_j)$. Then $x_k\sim x_i$ when $\alpha(x_k)\neq\alpha(x_i)$ and $x_k\sim x_j$ when $\alpha(x_k)\neq\alpha(x_j)$, a contradiction. Hence, $\phi(x_k)=\phi(x_i)$ or $\phi(x_k)=\phi(x_j)$. Similarly, we can prove that it holds for $\alpha$.

So we can obtain that $\theta(x_4)\in \{\theta(x_1), \theta(x_2)\}$ and  $\theta(x_5)\in \{\theta(x_1), \theta(x_2)\}\cap \{\theta(x_2), \theta(x_3)\}$. Note that $\theta(x_5)=\theta(x_2)$ when $\theta(x_1), \theta(x_2), \theta(x_3)$ are distinct. Let $\theta(x_1)=a$ and $\theta(x_2)=b$, where $a\neq b$. If $\theta(x_3)=a$, then $\theta(x_4)=b$ and $\theta(x_5)=a$. If $\theta(x_3)\neq a$, then let $\theta(x_3)=c$, where $a,b,c$ are pairwise distinct.  It follows that  $\theta(x_5)=b$ and   $\theta(x_4)=a$. So $P$ has only two types of coloring $ababa$ or $abcab$ under $\theta$. Since each color appears exactly twice under $\phi$, the type of coloring of $P$ can only be $abcab$  under $\phi$. Note that $\alpha(x_i)\neq \alpha(x_j)$ when $\phi(x_i)= \phi(x_j)$. So  the types of coloring of $P$ under $\phi$ and $\alpha$ are distinct. Hence, the type of coloring of $P$ can only be $ababa$  under $\alpha$. Note that $\phi(x_1)= \phi(x_4)$, $\phi(x_2)= \phi(x_5)$ and $\phi(x_1)\neq \phi(x_2)$. By the definitions of $\phi$ and $\alpha$, we have $\{\alpha(x_1),\alpha(x_4)\}\neq \{\alpha(x_2),\alpha(x_5)\}$, a contradiction. Therefore, $G_{t,k}$  is $P_5$-free.
\end{proof}

\section{Conclusion}\label{main3}

\noindent Combining Theorems \ref{mainthm1} and \ref{mainthm2}, the connectivity of $ R _k(G) $ concerning  $t$-chromatic $P_5$-free graphs $G$ is still unclear for some values of $k$ and $t$, so we propose the following problem.


\begin{problem}\label{Problem-larger-tchoose2}
For any $t\geq4$ and $k\geq{t\choose2}+1$, does there exist a  $t$-chromatic $ P_5 $-free graph $G$ such that $\mathcal{R}_{k}(G)$ is disconnected?
\end{problem}

Proposition \ref{p6free} might be useful for providing an affirmative answer to Problem \ref{Problem-larger-tchoose2}. Specifically, if there exists a $t$-chromatic $ P_5 $-free graph $G$ with $\omega(G)=t$ that has a frozen $k$-coloring, where $k\geq t+1\geq5$, then for any $s\geq 1$, there exists a $(t+s)$-chromatic $ P_5 $-free graph $G$ with $\omega(G)=t+s$ that has a frozen $(k+s)$-coloring. 
Consequently, if such graphs exist for $t=4$ with each $k\geq t+1$, then Problem \ref{Problem-larger-tchoose2} has an affirmative answer. 
It is worth noting that, we can not apply Proposition \ref{p6free} to improve the upper bound of $k$ in Theorem \ref{mainthm2}, since ${t-1\choose2}+1\leq {t\choose2}$ for each $t\geq4$.

\subsection*{Acknowledgements}

\noindent Hui Lei was partially supported by the National
Natural Science Foundation of China (No. 12371351) and the Young Elite Scientist Sponsorship Program by CAST. Yulai Ma was
partially supported by Sino-German (CSC-DAAD) Postdoc Scholarship Program 2021
(57575640), and Deutsche Forschungsgemeinschaft(DFG, German Research Foundation)-445863039. Zhengke Miao was partially supported by the National Natural Science
Foundation of China (No. 12031018). Yongtang Shi and Susu Wang were partially supported by the National Natural Science Foundation
of China (No. 12161141006), the Natural Science Foundation of Tianjin (No.
20JCJQJC00090) and the Fundamental Research Funds for the Central Universities,
Nankai University.

\end{document}